\newcommand{\tabincell}[2]{\begin{tabular}{@{}#1@{}}#2\end{tabular}}
\theoremstyle{definition}
\newtheorem{theorem}{Theorem}[section]
\newtheorem{definition}[theorem]{Definition}
\newtheorem{example}[theorem]{Example}
\newtheorem{lem}{Lemma}[section]
\theoremstyle{remark}
\numberwithin{equation}{section}
\def \rtheta{\overrightarrow{\theta}}
\def\my_c{c_\infty}
\def \z{{\mathbf{z}}}
\newcommand{\mynewtheorem}[2]{
  \newaliascnt{#1}{dummy}
  \newtheorem{#1}[#1]{#2}
  \aliascntresetthe{#1}
  \expandafter\def\csname #1autorefname\endcsname{#2}
}
\newcommand{\be}{\begin{equation}}
\newcommand{\ee}{\end{equation}}
\newcommand{\bde}{\begin{displaymath}}
\newcommand{\ede}{\end{displaymath}}
\newcommand{\beq}{\begin{eqnarray*}}
\newcommand{\eeq}{\end{eqnarray*}}
\newcommand{\beqa}{\begin{eqnarray}}
\newcommand{\eeqa}{\end{eqnarray}}
\newcommand{\bel }{\left\{\begin{array}{ll}}
\newcommand{\eel}{\cr \end{array} \right.}
\newcommand{\seq}[1]{{\lbrace #1 \rbrace}}
\newcommand{\dcb}{\begin{array}{lll}}
\newcommand{\dce}{\end{array}}
\newcommand{\ebe}{\begin{enumerate}\setlength{\baselineskip}{13pt}\setlength{\parskip}{0pt}}
\newcommand{\dbe}{\end{enumerate}}
\def\s{\bold{s}}
\def\x{\bold{x}}
\def\z{\bold{z}}
\newcommand{\E}{\mathcal{E}}
\def\P{{\mathbb P}}
\def\E{\mathbb{E}}
\def\I{\mathsf{1}}
\newcommand \A[1]{{\bf (#1)}}
\def\N{{\mathbb{N}} }
\def\E{{\mathbb{E}}  }
\def\P{{\mathbb{P}}  }
\def\I{{\mathbf{1}}}
\def\bint#1^#2{\displaystyle{\int_{#1}^{#2}}}
\def\bsum#1^#2{\displaystyle{\sum_{#1}^{#2}}}
\def\xdt_#1{X_#1(\Delta t)}
\def\0{{\mathbf{0}}}
\def\x{{\mathbf{x}}}
\def\y{{\mathbf{y}}}
\begin{document}

\title[\normalsize \textit{P\MakeLowercase{robabilistic representation of} IBP \MakeLowercase{formulae for some stochastic volatility models with unbounded drift}}]{\Large P\MakeLowercase{robabilistic representation of integration by parts formulae for some stochastic volatility models with unbounded drift}}

\author{Junchao Chen}
\address{Junchao Chen, Universit\'e de Paris, Laboratoire de Probabilit\'es, Statistique et Mod\'elisation (LPSM), F-75013 Paris, France}
\email{juchen@lpsm.paris}

\author{Noufel Frikha}
\address{Noufel Frikha, Universit\'e de Paris, Laboratoire de Probabilit\'es, Statistique et Mod\'elisation (LPSM), F-75013 Paris, France.}
\email{frikha@lpsm.paris}

\author{Houzhi Li}
\address{Houzhi Li, Universit\'e de Paris, Laboratoire de Probabilit\'es, Statistique et Mod\'elisation (LPSM), F-75013 Paris, France}
\email{houzhi.li@lpsm.paris}


\maketitle
\begin{abstract}
In this paper, we establish a probabilistic representation as well as some integration by parts formulae for the marginal law at a given time maturity of some stochastic volatility model with unbounded drift. Relying on a perturbation technique for Markov semigroups, our formulae are based on a simple Markov chain evolving on a random time grid for which we develop a tailor-made Malliavin calculus. Among other applications, an unbiased Monte Carlo path simulation method stems from our formulas so that it can be used in order to numerically compute with optimal complexity option prices as well as their sensitivities with respect to the initial values or \emph{Greeks} in finance, namely the \emph{Delta} and \emph{Vega}, for a large class of non-smooth European payoff. Numerical results are proposed to illustrate the efficiency of the method.
\end{abstract}

\section{Introduction}
In this work, we consider a two dimensional stochastic volatility model given by the solution of the following stochastic differential equation (SDE for short) with dynamics 
\begin{equation}
\label{stochastic:volatility:model:sde}
\left\{
\begin{array}{ll}
S_t & = s_0+ \int_0^t r S_s \, ds + \int_0^t \sigma_S(Y_s) S_s \, dW_s,\\ 
 Y_t & = y_0 + \int_0^t b_Y(Y_s) \, ds + \int_0^t \sigma_Y(Y_s) \, dB_s, \\
  d\langle B, W\rangle_s & = \rho \, ds
\end{array}
\right.
\end{equation}

\noindent where the coefficients $b_Y,\, \sigma_S, \, \sigma_Y: \mathbb{R} \rightarrow \mathbb{R}$ are smooth functions, $W$ and $B$ are one-dimensional standard Brownian motions with correlation factor $\rho \in (-1, 1)$ both being defined on some probability space $(\Omega, \mathcal{F}, \mathbb{P})$ .

The aim of this article is to prove a probabilistic representation formula for two integration by parts (IBP) formulae for the marginal law of the process $(S,Y)$ at a given time maturity $T$. To be more specific, for a given starting point $(s_0,y_0) \in (0,\infty)\times \mathbb{R}$ and a given finite time horizon $T>0$, we establish two Bismut-Elworthy-Li (BEL) type formulae for the two following quantities
 \begin{equation}\label{two:derivatives:for:ibp:formula}
 \partial_{s_0} \mathbb{E}\left[h(S_T, Y_T)\right] \quad \textnormal{ and } \quad \partial_{y_0} \mathbb{E}\left[h(S_T, Y_T)\right]
 \end{equation}

\noindent where $h$ is a real-valued possibly non-smooth payoff function defined on $[0,\infty) \times \mathbb{R}$.

Such IBP formulae have attracted a lot of interest during the last decades both from a theoretical and a practical point of views as they can be further analyzed to derive properties related to the transition density of the underlying process or to develop Monte Carlo simulation algorithm among other practical applications, see e.g. Nualart \cite{Nuabook}, Malliavin and Thalmaier \cite{malliavinT} and the references therein. They are also of major interest for computing sensitivities, also referred as to \emph{Greeks} in finance, of arbitrage price of financial derivatives which is the keystone for hedging purpose, i.e. for protecting the value of a portfolio against some possible changes in sources of risk. The two quantities appearing in \eqref{two:derivatives:for:ibp:formula} corresponds respectively to the Delta and Vega of the European option with payoff $h(S_T, Y_T)$. For a more detailed discussion on this topic, we refer the interested reader to Fourni\'e and al. \cite{fournie:1},\cite{fournie:2} for IBP formulae related to European, Asian options and conditional expectations, Gobet and al.\cite{Gobet:Kohatsu:1}, \cite{Gobet:Kohatsu:2} for IBP formulae related to some barrier or lookback options. Let us importantly point out that, from a numerical point of view, the aforementioned IBP formulae will inevitably involve a time discretization procedure of the underlying process and Malliavin weights, thus introducing two sources of error given by a bias and a statistical error, as it is already the case for the computation of the price $\mathbb{E}[h(S_T,Y_T)]$.  

Relying on a perturbation argument for the Markov semigroup generated by the couple $(X,Y)$, we first establish a probabilistic representation formula for the marginal law $(S_T, Y_T)$ for a fixed prescribed maturity $T>0$ based on a simple Markov chain evolving along a random time grid given by the jump times of an independent renewal process. Such probabilistic representation formula was first derived in Bally and Kohatsu-Higa \cite{BK} for the marginal law of a multi-dimensional diffusion process and of some L\'evy driven SDEs with bounded drift, diffusion and jump coefficients. Still in the case of bounded coefficients, it was then further investigated in Labord\`ere and al. \cite{henry-labordere2017}, Agarwal and Gobet \cite{GobetA} for multi-dimensional diffusion processes and in Frikha and al. \cite{frikha2019} for one-dimensional killed processes. The major advantage of the aforementioned probabilistic formulae lies in the fact that an unbiased Monte Carlo simulation method directly stems from it. Thus, it may be used to numerically compute an option price with optimal complexity since its computation will be only affected by the statistical error. However, let us emphasize that in general the variance of the Monte Carlo estimator tends to be large or even infinite. In order to circumvent this issue, an importance sampling scheme based on the law of the jump times of the underlying renewal process has been proposed in Anderson and Kohatsu-Higa \cite{APKA} in the multi-dimensional diffusion framework and in \cite{frikha2019} for one-dimensional killed processes.

The main novelty of our approach in comparison with the aforementioned works is that we allow the drift coefficient $b_Y$ to be possibly unbounded as it is the case in most stochastic volatility models (Stein-Stein, Heston, ...). Such boundedness condition on the drift coefficient has appeared persistently in the previous contributions and is actually essential since basically it allows to remove the drift in the choice of the approximation process in order to derive the probabilistic representation formula. The key ingredient that we here develop in order to remove this restriction consists in choosing adequatly the approximation process around which the original perturbation argument of the Markov semigroup $(X,Y)$ is done by taking into account the transport of the initial condition by the deterministic ordinary differential equation (ODE) having unbounded coefficient\footnote{This dynamical system is obtained by removing the noise, that is, by setting $\sigma_Y\equiv 0$, from the dynamics of $Y$ in \eqref{stochastic:volatility:model:sde}.}. The approximation process, or equivalently the underlying Markov chain on which the probabilistic representation is based, is then obtained from the original dynamics \eqref{stochastic:volatility:model:sde} by \emph{freezing the coefficients $b_Y$, $\sigma_S$ and $\sigma_Y$ along the flow of this ODE}. We stress that the previous choice is here crucial since it provides the adequate approximation process on which some good controls can be established. To the best of our knowledge, this feature appears to be new in this context. 

Having this probabilistic representation formula at hand together with the tailor-made Malliavin calculus machinery for this well-chosen underlying Markov chain, in the spirit of the BEL formula established in \cite{frikha2019} for killed diffusion processes with bounded drift coefficient, we rely on a propagation of the spatial derivatives forward in time then perform local IBP formulas on each time interval of the random time grid and finally merge them in a suitable manner in order to establish the two BEL formulae for the two quantities \eqref{two:derivatives:for:ibp:formula}. Following the ideas developed in \cite{APKA}, we achieve finite variance for the Monte Carlo estimators obtained from the probabilistic representation formulas of the couple $(S_T, Y_T)$ and of both IBP formulae by selecting adequatly the law of the jump times of the renewal process. We finally provide some numerical tests illustrating our previous analysis.

The article is organized as follows. In Section \ref{section:preliminaries}, we introduce our assumptions on the coefficients, present the approximation process that will be the main building block for our perturbation argument as well as the Markov chain that will play a central role in our probabilistic representation for the marginal law of the process $(X,Y)$ and for our IBP formulae. In addition, we construct the taillor-made Malliavin calculus machinery related to the underlying Markov chain upon which both IBP formulae are made. In Section \ref{sec:probabilistic:representation}, relying on the Markov chain introduced in Section \ref{section:preliminaries}, we establish in Theorem \ref{theorem:probabilistic:formulation} the probabilistic representation formula for the coupled $(S_T,Y_T)$. In Section \ref{section:ibp:formulae}, we establish the BEL formulae for the two quantities appearing in \eqref{two:derivatives:for:ibp:formula}. The main result of this section is Theorem \ref{thm:ibp:formulae}. Some numerical results are presented in Section \ref{section:numerical:results}. The proofs of Theorem \ref{theorem:probabilistic:formulation} and of some other technical but important results are postponed to the appendix of Section \ref{section:appendix}.

\subsection*{Notations:} For a fixed time $T$ and positive integer $n$, we will use the following notation for time and space variables $\s_n=(s_1,\cdots ,s_n)$, $\x_n = (x_1, \cdots, x_n)$, the differentials $d\s_n = ds_1 \cdots ds_n$, $d\x_n = dx_1 \cdots dx_n$ and also introduce the simplex $\Delta_n(T) := \left\{ \mathbf{s}_n \in [0,T]^n: 0 \leq s_1 < \cdots s_{n} \leq T \right\}$. 

In order to deal with time-degeneracy estimates, we will often use the following space-time inequality:
\begin{equation}
\label{space:time:inequality}
\forall p, q >0, \, \forall x \in \mathbb{R}, \quad |x|^p e^{-q|x|^2} \leq (p/(2qe))^{p/2}.
\end{equation}

For two positive real numbers $\alpha$ and $\beta$, we define the Mittag-Leffler function $z \mapsto E_{{\alpha ,\beta }}(z)=\sum _{{k=0}}^{\infty } {z^{k}}/{\Gamma (\alpha k+\beta )}$. For a positive integer $d$, we denote by $\mathcal{C}^{\infty}_p(\mathbb{R}^d)$ the space of real-valued functions which are infinitely differentiable on $\mathbb{R}^d$ with derivatives of any order having polynomial growth.

\section{Preliminaries: assumptions, definition of the underlying Markov chain and related Malliavin calculus} \label{section:preliminaries}

\subsection{Assumptions}
Throughout the article, we work on a probability space $(\Omega, \mathcal{F}, \mathbb{P})$ which is assumed to be rich enough to support all random variables that we will consider in what follows. We will work under the following assumptions on the coefficients:
\begin{trivlist}
\item[\A{AR}] The coefficients $\sigma_S$ and $\sigma_Y$ are bounded and smooth, in particular $\sigma_S$ and $\sigma_Y$ belong to $\mathcal{C}^{\infty}_b(\mathbb{R})$. The drift coefficient $b_Y$ belongs to $\mathcal{C}^{\infty}(\mathbb{R})$ and admits bounded derivatives of any order greater than or equal to one. In particular, the drift coefficient $b_Y$ may be unbounded.
\item[\A{ND}] There exists $\kappa \geq1$ such that for all $x \in \mathbb{R}$, 
$$
\kappa^{-1} \leq a_S(x) \leq \kappa, \quad \kappa^{-1} \leq a_Y(x) \leq \kappa
$$
\noindent where $a_S = \sigma^2_S$ and $a_Y= \sigma^2_Y$. Therefore, without loss of generality, we will assume that both $\sigma_S$ and $\sigma_Y$ are positive function.
\end{trivlist}

Apply It\^o's Lemma to $X_t = \ln(S_t) $. We get
\begin{equation}
\label{new:sde:to:approximate}
\left\{
\begin{array}{ll}
X_t & = x_0+ \int_0^t \Big(r -\frac12 a_S(Y_s)\Big) \,  ds + \int_0^t \sigma_S(Y_s)  \, dW_s,\\ 
 Y_t & = y_0 + \int_0^t b_Y(Y_s) \, ds + \int_0^t \sigma_Y(Y_s) \, dB_s, \\
  d\langle B, W\rangle_s & = \rho \, ds,
\end{array}
\right.
\end{equation}

\noindent with $x_0 = \ln(s_0)$. Without loss of generality, we will thus work with the Markov semigroup associated to the process $(X,Y)$, namely $P_t h(x_0, y_0) = \mathbb{E}[h(X_t, Y_t)]$. 
\subsection{Choice of the approximation process}\label{subsection:choice:approximation:process}

As already mentioned in the introduction, our strategy here is based on a probabilistic representation of the marginal law, in the spirit of the unbiased simulation method introduced for diffusion processes by Bally and Kohatsu-Higa \cite{BK}, see also Labord\`ere and al. \cite{henry-labordere2017}, and investigated from a numerical perspective by Andersson and Kohatsu-Higa \cite{APKA}. We also mention the recent contribution of one the author with Kohatsu-Higa and Li \cite{frikha2019} for IBP formulae for the marginal law of one-dimensional killed diffusion processes.

However, at this stage, it is important to point out that our choice of approximation process significantly differs from the four aforementioned references. Indeed, in the previous contributions, the drift is assumed to be bounded and basically plays no role so that one usually removes it in the dynamics of the approximation process. In order to handle the unbounded drift term $b_Y$ appearing in the dynamics of the volatility process, one has to take into account the transport of the initial condition by the ODE obtained by removing the noise in the dynamics of $Y$. To be more specific, we denote by $(m_t(s, y))_{t\in [s,T]}$, $0\leq s \leq T$, the unique solution to the ODE $\dot{m}_t = b_Y(m_t)$ with initial condition $m_s=y$. Observe that by time-homogeneity of the coefficient $b_Y$, one has $m_t(s, y)= m_{t-s}(0, y)$. We will simplify the notation when $s=0$ and write $m_t(y_0)$ for $m_t(0, y_0)$. When there is no ambiguity, we will often omit the dependence with respect to the initial point $y_0$ and we only write $m_t$ for $m_t(y_0)$. We now introduce the approximation process $(\bar{X}, \bar{Y})$ defined by
\begin{equation}
\label{proxy:sde}
\left\{
\begin{array}{ll}
\bar{X}^{x_0}_t  &  = x_0 +  \int_0^t (r -\frac12 a_S(m_s)) \, ds + \int_0^t \sigma_S(m_s) \, dW_s,\\ 
\bar{Y}^{y_0}_t &   =  y_0 + \int_0^t b_Y(m_s) \, ds + \int_0^t \sigma_Y(m_s) \, dB_s, \\
  d\langle B, W\rangle_s & = \rho \, ds.
\end{array}
\right.
\end{equation}

We will make intensive use of the explicit form of the Markov semigroup $(\bar{P}_t)_{t\in [0,T]}$ defined for any bounded measurable map $h:\mathbb{R}^2 \rightarrow \mathbb{R}$ by $\bar{P}_{t} h(x_0, y_0) = \mathbb{E}[h(\bar{X}^{x_0}_t, \bar{Y}^{y_0}_t)]$.

\begin{lem}\label{definition:density:proxy} Let $(x_0,y_0) \in \mathbb{R}^2$, $\rho \in (-1,1)$ and $t \in (0,\infty)$. Then, for any bounded and measurable map $h : \mathbb{R}^2 \rightarrow \mathbb{R}$, it holds
\begin{align}
\bar{P}_{t} h(x_0, y_0) &  = \int_{\mathbb{R}^2} h(x, y) \, \bar{p}(t, x_0, y_0, x, y)  \, dx dy \label{proxy:semigroup}
\end{align}

\noindent with
\begin{align*}
\bar{p}(t, x_0, y_0, x, y) & = \frac{1}{2 \pi \sigma_{S, t} \sigma_{Y, t} \sqrt{1-\rho_t^2}} \exp\Big(-\frac12 \frac{(x-x_0 - (r t - \frac{1}{2} a_{S, t}))^2}{a_{S, t} (1-\rho_t^2)} -\frac12 \frac{(y-m_t)^2}{a_{Y, t }  (1-\rho_t^2)}\Big) \\
& \quad \times \exp\Big( \rho_t \frac{(x-x_0 - (rt-\frac{1}{2} a_{S, t}))(y-m_t)}{\sigma_{S, t} \sigma_{Y, t} (1-\rho_t^2)}  \Big)
\end{align*} 

\noindent where we introduced the notations 
\begin{align*}
a_{S, t} & = a_{S, t}(y_0) := \sigma^2_{S, t} := \int_0^t a_{S}(m_s(y_0)) \, ds,\\ 
a_{Y, t} & = a_{Y, t}(y_0) := \sigma^2_{Y, t} := \int_0^t a_{Y}(m_s(y_0)) \, ds, \\
\sigma_{S, Y, t} & = \sigma_{S, Y, t}(y_0) := \int_0^t (\sigma_{S} \sigma_Y)(m_s(y_0))  \, ds, \\
\rho_t & := \rho \sigma_{S, Y, t}/(\sigma_{S, t} \sigma_{Y, t}).
\end{align*}

Moreover, for any $t\in (0,T]$, there exists some positive constant $C:=C(T, \rho, a, r, \kappa)$ such that
\begin{equation}
\label{upper:bound:transition:density:approximation:semigroup}
\forall t\in (0,T], \quad \bar{p}(t, x_0, y_0, x, y) \leq C \bar{q}_{4\kappa}(t, x_0, y_0, x, y)
\end{equation}
\noindent where, for a positive parameter $c$, we introduced the density function 
\begin{equation}
\label{def:kernel:bar:q}
(x, y) \mapsto \bar{q}_{c}(t, x_0, y_0, x, y) := \frac{1}{2 \pi c t } \exp\Big(- \frac{(x-x_0)^2}{2ct} -  \frac{(y-m_{t})^2}{2c t}\Big).
\end{equation}


\end{lem}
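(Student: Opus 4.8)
The plan is to compute the transition density of the Gaussian process $(\bar{X}^{x_0}, \bar{Y}^{y_0})$ explicitly by observing that, since the coefficients in \eqref{proxy:sde} are deterministic functions of $s$ (through the flow $m_s$), the pair $(\bar{X}^{x_0}_t, \bar{Y}^{y_0}_t)$ is a Gaussian vector whose mean and covariance are obtained by direct integration. First I would identify the mean: taking expectations in \eqref{proxy:sde} kills the stochastic integrals, giving $\mathbb{E}[\bar{X}^{x_0}_t] = x_0 + rt - \tfrac12 a_{S,t}$ (using $\int_0^t a_S(m_s)\,ds = a_{S,t}$) and $\mathbb{E}[\bar{Y}^{y_0}_t] = y_0 + \int_0^t b_Y(m_s)\,ds = m_t$, the last equality being exactly the defining ODE $\dot m_t = b_Y(m_t)$, $m_0 = y_0$, in integrated form. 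Next I would compute the covariance matrix via Itô isometry and the correlation structure $d\langle B,W\rangle_s = \rho\,ds$: $\mathrm{Var}(\bar{X}_t) = \int_0^t \sigma_S^2(m_s)\,ds = a_{S,t}$, $\mathrm{Var}(\bar{Y}_t) = \int_0^t \sigma_Y^2(m_s)\,ds = a_{Y,t}$, and $\mathrm{Cov}(\bar{X}_t,\bar{Y}_t) = \rho\int_0^t (\sigma_S\sigma_Y)(m_s)\,ds = \rho\,\sigma_{S,Y,t}$. Writing the density of a bivariate Gaussian with these parameters, and setting $\rho_t := \rho\,\sigma_{S,Y,t}/(\sigma_{S,t}\sigma_{Y,t})$ for the normalized correlation coefficient (note $|\rho_t| < 1$ follows from Cauchy–Schwarz applied to $\int_0^t \sigma_S(m_s)\sigma_Y(m_s)\,ds$ together with $|\rho| < 1$, so the Gaussian is genuinely non-degenerate), one recovers precisely the stated formula for $\bar{p}(t,x_0,y_0,x,y)$ and hence \eqref{proxy:semigroup}.

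For the Gaussian upper bound \eqref{upper:bound:transition:density:approximation:semigroup}, the idea is that under \A{ND} the quadratic form in the exponent is comparable to $|x - x_0 - (rt - \tfrac12 a_{S,t})|^2/t + |y - m_t|^2/t$ uniformly in the parameters, and the leading normalizing constant $1/(2\pi\sigma_{S,t}\sigma_{Y,t}\sqrt{1-\rho_t^2})$ is comparable to $1/t$. Concretely: from \A{ND} we have $\kappa^{-1} t \leq a_{S,t}, a_{Y,t} \leq \kappa t$, so $\sigma_{S,t}\sigma_{Y,t} \geq \kappa^{-1} t$; and $1 - \rho_t^2 \geq 1 - \rho^2 > 0$ since $|\rho_t| \le |\rho|$ by Cauchy–Schwarz — this bounds the prefactor by $C/t$. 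For the exponent, the symmetric matrix governing the quadratic form has eigenvalues bounded below by a constant times $1/(\kappa t)$ uniformly, so the exponent is bounded above by $-c'(|x - \bar\mu_x|^2 + |y - m_t|^2)/t$ for some $c' = c'(\kappa, \rho) > 0$, where $\bar\mu_x = x_0 + rt - \tfrac12 a_{S,t}$. Finally I would absorb the discrepancy between $\bar\mu_x$ and $x_0$: since $|rt - \tfrac12 a_{S,t}| \leq (|r| + \tfrac12\kappa)T =: C_0$ on $t \in (0,T]$, one writes $|x - x_0|^2 \leq 2|x - \bar\mu_x|^2 + 2C_0^2$ and uses the elementary inequality $e^{-c'(|x-\bar\mu_x|^2+|y-m_t|^2)/t} \leq e^{c' C_0^2/(2t)}\,\cdots$ — more carefully, one splits the exponent, keeping half of the Gaussian decay to dominate $\bar q_{4\kappa}$ and using the other half together with the space-time inequality \eqref{space:time:inequality} to control the cross term $\bar\mu_x - x_0$; the constant $4\kappa$ is then chosen large enough that $\exp(-|x-x_0|^2/(8\kappa t) - |y-m_t|^2/(8\kappa t)) \leq \exp(-c'(|x-\bar\mu_x|^2 + |y-m_t|^2)/t)$ after this rearrangement, at the cost of enlarging $C$.

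The main obstacle, though essentially bookkeeping rather than a genuine difficulty, is tracking the dependence of all constants on $(T, \rho, a, r, \kappa)$ and in particular verifying that the comparison of quadratic forms is uniform in $y_0$ — that is, that the lower bound on the eigenvalues of the covariance (equivalently, the non-degeneracy quantified by $1 - \rho_t^2$ being bounded away from $0$) does not deteriorate as $t \to 0$ or as the parameters vary; this is where \A{ND} enters decisively, since without the two-sided bound $\kappa^{-1} \leq a_S, a_Y \leq \kappa$ the ratio $a_{S,t}/a_{Y,t}$ and the normalized correlation $\rho_t$ could degenerate. A secondary point requiring a little care is that the center of the Gaussian in the $x$-variable is $x_0 + rt - \tfrac12 a_{S,t}$ rather than $x_0$, so the passage to the clean kernel $\bar q_{4\kappa}$ centered at $(x_0, m_t)$ genuinely uses the space-time inequality \eqref{space:time:inequality} to trade a factor $|x - x_0 - (rt - \tfrac12 a_{S,t})|$-type shift against a harmless multiplicative constant, rather than being an exact identity.
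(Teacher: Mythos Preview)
Your proposal is correct and follows essentially the same route as the paper: identify $(\bar X_t,\bar Y_t)$ as bivariate Gaussian, read off mean and covariance, then bound the quadratic form using $\kappa^{-1}t\le a_{S,t},a_{Y,t}\le\kappa t$ and $|\rho_t|\le|\rho|$.

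One small wobble worth fixing: in handling the shift $\bar\mu_x-x_0=rt-\tfrac12 a_{S,t}$ you bound it by the constant $C_0=(|r|+\tfrac12\kappa)T$ and then worry about a factor $e^{c'C_0^2/t}$ blowing up as $t\to0$, reaching for the space--time inequality as a repair. That detour is unnecessary. The paper simply uses $(a-b)^2\ge\tfrac12 a^2-b^2$ together with the sharper estimate $|rt-\tfrac12 a_{S,t}|\le(|r|+\tfrac12\kappa)\,t$ (the shift is $O(t)$, not merely $O(1)$), so that the residual term in the exponent is
\[
\frac{(rt-\tfrac12 a_{S,t})^2}{c\,t}\;\le\;\frac{(|r|+\tfrac12\kappa)^2\,t}{c}\;\le\;\frac{(|r|+\tfrac12\kappa)^2\,T}{c},
\]
a bounded constant absorbed into $C$. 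No appeal to \eqref{space:time:inequality} is needed at this step.
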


\begin{proof} We write
\begin{align*}
(\bar{X}^{x_0}_t, \bar{Y}^{y_0}_t) = \Big(x_0 +  r t -\frac12 a_{S, t}  + \int_0^t \sigma_{S}(m_s) \, dW_s , m_t +  \int_0^t \sigma_Y(m_s) \, \Big(\rho dW_s+  \sqrt{1-\rho^2}  d\widetilde{W}_s\Big) \Big)
\end{align*}

\noindent where $\widetilde{W}$ is a one-dimensional standard Brownian motion independent of $W$. We thus deduce that $(\bar{X}^{x_0}_t, \bar{Y}^{y_0}_t) \sim \mathcal{N}(\mu(t, x_0, y_0), \Sigma_{t})$ with
$$
\mu(t, x_0, y_0) = \Big(x_0 +  r t -\frac12 a_{S, t} ,  m_{t}\Big) \quad  \textnormal{and} \quad \Sigma_{t} =
 \left(
\begin{matrix}
a_{S, t}&   \rho \sigma_{S, Y, t} \\
 \rho \sigma_{S, Y, t}  & a_{Y, t}
\end{matrix}
\right).
$$

The expression of the transition density then readily follows. Now, from \A{ND}, it is readily seen that $a_{S, t}, a_{Y, t} \leq \kappa t$ so that using the inequalities $(a-b)^2 \geq \frac12 a^2 - b^2$ and $\rho^2_t \leq \rho^2$, it follows
\begin{align*}
\bar{p}(t, x_0, y_0, x, y) & = \frac{1}{2 \pi \sigma_{S, t}\sigma_{Y, t} \sqrt{1-\rho_t^2}} \exp\Big(-\frac12 \frac{(x-x_0 - (rt-\frac{1}{2} a_{S, t}))^2}{a_{S, t} (1-\rho_t^2)} -\frac12 \frac{(y-m_t)^2}{a_{Y, t}  (1-\rho_t^2)}\Big) \\
& \quad \times \exp\Big( \frac{\rho_t}{1-\rho_t^2} \frac{(x-x_0 - (rt-\frac{1}{2} a_{S, t}))(y-m_t)}{\sigma_{S, t}\sigma_{Y, t}}  \Big)\\
& \leq C \frac{1}{2 \pi (2\kappa) t } \exp\Big(- (4\kappa)^{-1} \frac{(x-x_0)^2}{2t} - (4\kappa)^{-1} \frac{(y-m_{t})^2}{2 t}\Big) \\
& =: C \bar{q}_{4\kappa}(t, x_0, y_0, x, y)
\end{align*}

\noindent for some positive constants $C:=C(T, \lambda, \rho, a, r, \kappa)$.

\end{proof}

We will also use the notation $(\bar{X}^{s, x}_t, \bar{Y}^{s, y}_{t})_{t\geq s}$ for the approximation process starting from $(x, y)$ at time $s$ and with coefficients frozen along the deterministic flow $\left\{m_{t}(s, y) = m_{t-s}(y) , t\geq s\right\}$. Note that the corresponding Markov semigroup satisfies $\bar{P}_{s, t}h(x, y) := \mathbb{E}\left[h(\bar{X}^{s, y}_t, \bar{Y}^{s, y}_t)\right] = \mathbb{E}\left[h(\bar{X}^{y}_{t-s}, \bar{Y}^{y}_{t-s})\right] = \bar{P}_{t-s}h(x, y)$.

\subsection{Markov chain on random time grid}

The first tool that we will employ is a renewal process $N$ that we now introduce.
\begin{definition}\label{counting:process}
Let $\tau:=(\tau_n)_{n\geq1}$ be a sequence of random variables such that $(\tau_{n} - \tau_{n-1})_{n\geq 1}$, with the convention $\tau_0=0$, are i.i.d. with positive density function $f$ and cumulant distribution function $t\mapsto F(t) = \int_{-\infty}^{t} f(s) \, ds$. Then, the renewal process $N:=(N_t)_{t\geq0}$ with jump times $\tau$ is defined by $N_t:=\sum_{n\geq1} \textbf{1}_{\left\{ \tau_n \leq t\right\}}$.
\end{definition}

It is readily seen that, for any $t>0$, $\left\{ N_{t} = n \right\} = \left\{\tau_n \leq t < \tau_{n+1} \right\}$ and by an induction argument that we omit, one may prove that the joint distribution of $(\tau_1, \cdots, \tau_n)$ is  given by
$$
\mathbb{P}(\tau_1 \in ds_1, \cdots, \tau_n \in ds_n) = \prod_{j=0}^{n-1} f(s_{i+1}-s_i) \textbf{1}_{\left\{ 0 < s_1 < \cdots < s_n\right\}}
$$

\noindent which in turn implies
\begin{align}
\mathbb{E}[\textbf{1}_{\left\{ N_{t} =n \right\}} \Phi(\tau_1, \cdots, \tau_n)] & = \mathbb{E}[\textbf{1}_{\left\{\tau_n \leq t < \tau_{n+1}\right\}} \Phi(\tau_1, \cdots, \tau_n)] \nonumber \\
& = \int_{t}^{\infty} \int_{\Delta_n(t)} \Phi(s_1, \cdots, s_n) \prod_{j=0}^{n} f(s_{j+1}-s_j) \, d\bold{s}_{n+1} \nonumber
\end{align}

\noindent with the convention $s_0= 0$. Hence, by Fubini's theorem, it holds
\begin{align}
\mathbb{E}[\textbf{1}_{\left\{ N_{t} =n \right\}} \Phi(\tau_1, \cdots, \tau_n)] & = \int_{\Delta_n(t)} \Phi(s_1, \cdots, s_n) (1- F(t-s_{n})) \prod_{j=0}^{n-1} f(s_{j+1}-s_j) \, d\bold{s}_{n} \label{probabilistic:representation:time:integrals}
\end{align}

\noindent for any measurable map $\Phi: \Delta_n(t) \rightarrow \mathbb{R}$ satisfying $\mathbb{E}[\textbf{1}_{\left\{ N_{t} =n \right\}} |\Phi(\tau_1, \cdots, \tau_n)|] < \infty$.
  
 Usual choices that we will consider are the followings.

\begin{example}\label{counting:process:examples}

\begin{enumerate}
\item If the density function $f$ is given by $f(t) = \lambda e^{-\lambda t} \textbf{1}_{[0,\infty)}(t)$ for some positive parameter $\lambda$, then $N$ is a Poisson process with intensity $\lambda$.
  
\item If the density function $f$ is given by $f(t) = \frac{1-\alpha}{\bar{\tau}^{1-\alpha}}\frac{1}{t^{\alpha}} \textbf{1}_{[0, \bar{\tau}]}(t)$ for some parameters $(\alpha, \bar{\tau})\in (0,1) \times (0,\infty)$, then $N$ is a renewal process with $[0,\bar{\tau}]$-valued $Beta(1-\alpha, 1)$ jump times.
\item More generally, if the density function $f$ is given by $f(t) = \frac{\bar{\tau}^{1-\alpha-\beta}}{B(\alpha,\beta)}\frac{1}{t^{1-\alpha}(\bar{\tau}-t)^{1-\beta}} \textbf{1}_{[0, \bar{\tau}]}(t)$ for some parameters $(\alpha, \beta, \bar{\tau})\in (0,1)^2 \times (0,\infty)$, then $N$ is a renewal process with $[0,\bar{\tau}]$-valued $Beta(\alpha, \beta)$ jump times.
 \end{enumerate}
\end{example}

Given a sequence $Z = (Z^{1}_{n}, Z^{2}_n)_{n\geq1}$ of i.i.d. random vector with law $\mathcal{N}(0, I_2)$ and a renewal process $N$ independent of $Z$ with jump times $(\tau_i)_{i\geq0}$, we set $\zeta_{i} = \tau_i \wedge T$, with the convention $\zeta_0= 0$, and we consider the two-dimensional Markov chain $(\bar{X}, \bar{Y})$ with $(\bar{X}_0, \bar{Y}_0) = (x_0, y_0)$ at time $0$ (evolving on the random time grid $(\zeta_i)_{i\geq0}$) and with dynamics for any $0\leq i \leq N_T$
\begin{equation}\label{euler:scheme}
\left\{
\begin{array}{rl}
\bar{X}_{i+1} & = \bar{X}_{i} +  \Big(r (\zeta_{i+1}-\zeta_i)  -\frac12 a_{S,i}\Big) + \sigma_{S, i} Z^{1}_{i+1},\\
\bar{Y}_{i+1} & = m_i + \sigma_{Y,i} \left( \rho_i Z^1_{i+1} + \sqrt{1-\rho_i^2} Z^{2}_{i+1}\right),
\end{array}
\right.
\end{equation}

\noindent where we introduced the notations 
\begin{align*}
a_{S, i} & := \sigma^2_{S, i} := a_{S, \zeta_{i+1}-\zeta_i}(\bar{Y}_i) = \int_0^{\zeta_{i+1}-\zeta_i} a_S(m_s(\bar{Y}_i)) \, ds, \\
a_{Y, i} &  := \sigma^2_{Y, i} := a_{Y, \zeta_{i+1}-\zeta_i}(\bar{Y}_i) = \int_0^{\zeta_{i+1}-\zeta_i} a_Y(m_s(\bar{Y}_i)) \, ds, \\  
\sigma_{S, Y, i} & : = \int_0^{\zeta_{i+1}-\zeta_i} (\sigma_S \sigma_Y)(m_s(\bar{Y}_i)) \, ds, \\
\rho_i & := \rho_{\zeta_{i+1}-\zeta_i}(\bar{Y}_i) = \rho \frac{\sigma_{S, Y, i}}{\sigma_{S, i} \sigma_{Y,i}},\\ 
m_{i} & := m_{\zeta_{i+1}-\zeta_i}(\bar{Y}_{i}). 
\end{align*}

We will denote by $\sigma'_{S, i}$ the first derivative of $y\mapsto \sigma_{S, i}(y)$ taken at $\bar{Y}_i$ and proceed similarly for the quantities $\sigma'_{Y, i}, \, \sigma'_{S, Y, i}, \, \rho'_i$ and $m'_i$.
We define the filtration $\mathcal{G} = (\mathcal{G}_i)_{i\geq0}$ where $\mathcal{G}_{i} = \sigma(Z^1_j, Z^{2}_j, \, 0\leq j \leq i)$, for $i\geq1$ and $\mathcal{G}_0$ stands for the trivial $\sigma$-field. We assume that the filtration $\mathcal{G}$ satisfies the usual conditions. For an integer $n$, we will use the notations $\zeta^{n}=(\zeta_0, \cdots, \zeta_n)$ and $\tau^{n}=(\tau_0, \cdots, \tau_n)$.

\subsection{Tailor-made Malliavin calculus for the Markov chain $(\bar{X}, \bar{Y})$.}\label{sec:tailor:mall:calculus}

In this section we introduce a tailor-made Malliavin calculus for the underlying Markov chain $(\bar{X}, \bar{Y})$ defined by \eqref{euler:scheme} which will be employed in order to establish our IBP formulae. Instead of using an infinite dimensional calculus as it is usually done in the literature, see e.g. Nualart \cite{Nuabook}, the approach developed below is based on a finite dimensional calculus for which the dimension is given by the number of jumps of the underlying renewal process involved in the Markov chain $(\bar{X}, \bar{Y})$.

\begin{definition} Let $n \in \mathbb{N}$. For any $i \in \left\{ 0, \cdots, n\right\}$, we define the set $\mathbb{S}_{i, n}(\bar{X}, \bar{Y})$, as the space of random variables $H$ such that
\begin{itemize}
\item $H = h(\bar{X}_{i}, \bar{Y}_i, \bar{X}_{i+1}, \bar{Y}_{i+1}, \zeta^{n+1})$, on the set $\left\{ N_T = n \right\}$, where we recall $\zeta^{n+1} := (\zeta_0, \cdots, \zeta_{n+1}) = (0, \zeta_1, \cdots, \zeta_{n}, T)$.
\item For any $\s_{n+1} \in \Delta_{n+1}(T)$, the map $h(.,.,.,., \s_{n+1}) \in \mathcal{C}^{\infty}_p(\mathbb{R}^4)$. 
\end{itemize}

\end{definition}

For a r.v. $H \in \mathbb{S}_{i, n}(\bar{X}, \bar{Y})$, we will often abuse the notations and write
$$
H \equiv H(\bar{X}_{i}, \bar{Y}_i, \bar{X}_{i+1}, \bar{Y}_{i+1}, \zeta^{n+1})
$$  

\noindent that is the same symbol $H$ may denote the r.v. or the function in the set $\mathbb{S}_{i, n}(\bar{X}, \bar{Y})$. One can easily define the flow derivatives for $H \in \mathbb{S}_{i, n}(\bar{X}, \bar{Y})$ as follows
\begin{align}
\partial_{\bar{X}_{i+1}} H & = \partial_3 h(\bar{X}_{i}, \bar{Y}_i, \bar{X}_{i+1}, \bar{Y}_{i+1}, \zeta^{n+1}), \nonumber \\
\partial_{\bar{Y}_{i+1}} H & = \partial_4 h(\bar{X}_{i}, \bar{Y}_i, \bar{X}_{i+1}, \bar{Y}_{i+1}, \zeta^{n+1}), \nonumber \\
\partial_{\bar{X}_{i}} H & = \partial_1 h(\bar{X}_{i}, \bar{Y}_i, \bar{X}_{i+1}, \bar{Y}_{i+1}, \zeta^{n+1}) + \partial_3 h(\bar{X}_{i}, \bar{Y}_i, \bar{X}_{i+1}, \bar{Y}_{i+1}, \zeta^{n+1})  \partial_{\bar{X}_i} \bar{X}_{i+1}, \nonumber \\
\partial_{\bar{Y}_{i}} H & = \partial_2 h(\bar{X}_{i}, \bar{Y}_i, \bar{X}_{i+1}, \bar{Y}_{i+1}, \zeta^{n+1}) + \partial_3 h(\bar{X}_{i}, \bar{Y}_i, \bar{X}_{i+1}, \bar{Y}_{i+1}, \zeta^{n+1})  \partial_{\bar{Y}_i} \bar{X}_{i+1} + \partial_4 h(\bar{X}_{i}, \bar{Y}_i, \bar{X}_{i+1}, \bar{Y}_{i+1}, \zeta^{n+1})  \partial_{\bar{Y}_i} \bar{Y}_{i+1}, \nonumber
\end{align}
\noindent and from the dynamics \eqref{euler:scheme}
\begin{align}
 \partial_{\bar{X}_i} \bar{X}_{i+1} & = 1, \nonumber \\
 \partial_{\bar{Y}_i} \bar{Y}_{i+1} & = m'_{i} + \sigma'_{Y, i} \Big(\rho_i Z^1_{i+1} + \sqrt{1-\rho_i^2} Z^2_{i+1}\Big) + \sigma_{Y, i} \frac{\rho'_i}{\sqrt{1-\rho^2_i}}\left(\sqrt{1-\rho^2_i} Z^1_{i+1} - \rho_i Z^{2}_{i+1} \right), \label{partial:flow:deriv:Y:bar:process} \\
\partial_{\bar{Y}_i} \bar{X}_{i+1}   & = - \frac12 a'_{S,i}  + \sigma'_{S, i}Z^{1}_{i+1} = - \frac12 a'_{S, i} + \frac{ \sigma'_{S, i}}{\sigma_{S, i}} \Big(\bar{X}_{i+1}-\bar{X}_i - \Big(r(\zeta_{i+1}-\zeta_i) -\frac12 a_{S, i}\Big) \Big). \label{partial:Y:flow:deriv:X:bar:process}
\end{align}

We now define the integral and derivative operators for $H \in \mathbb{S}_{i, n}(\bar{X}, \bar{Y})$, as
\begin{align}
\mathcal{I}^{(1)}_{i+1}(H) & = H \Big[\frac{Z^{1}_{i+1}}{\sigma_{S,i}(1-\rho_i^2)} - \frac{\rho_i}{1-\rho_i^2} \frac{\rho_i Z^1_{i+1} + \sqrt{1-\rho_i^2} Z^2_{i+1}}{\sigma_{S, i}} \Big] - \mathcal{D}^{(1)}_{i+1} H, \label{integral:operator:1} \\
\mathcal{I}^{(2)}_{i+1}(H) &= H \Big[\frac{\rho_i Z^1_{i+1} + \sqrt{1-\rho_i^2} Z^{2}_{i+1}}{\sigma_{Y, i} (1-\rho_i^2)} - \frac{\rho_i}{1-\rho_i^2} \frac{ Z^{1}_{i+1} }{\sigma_{Y, i}} \Big] - \mathcal{D}^{(2)}_{i+1} H, \label{integral:operator:2}\\
\mathcal{D}^{(1)}_{i+1} H & = \partial_{\bar{X}_{i+1}} H, \label{derivative:operator:1} \\
\mathcal{D}^{(2)}_{i+1} H & = \partial_{\bar{Y}_{i+1}} H.\label{derivative:operator:2}
\end{align}

Note that due to the above definitions and assumption \A{H}, it is readily checked that $\mathcal{I}^{(1)}_{i+1}(H), \mathcal{I}^{(2)}_{i+1}(H), \mathcal{D}^{(1)}_{i+1} H $ and $\mathcal{D}^{(2)}_{i+1} H$ are elements of $\mathbb{S}_{i, n}(\bar{X}, \bar{Y})$ so that we can define iterations of the above operators. Namely, by induction, for a multi-index $\alpha = (\alpha_1, \cdots, \alpha_p)$ of length $p$ with $\alpha_i \in \left\{ 1, 2\right\}$ and $\alpha_{p+1} \in \left\{1, 2\right\}$, we define
$$
\mathcal{I}^{(\alpha, \alpha_{p+1})}_{i+1}(H) = \mathcal{I}^{(\alpha_{p+1})}_{i+1}(\mathcal{I}^{(\alpha)}_{i+1}(H)), \quad \mathcal{D}^{(\alpha, \alpha_{p+1})}_{i+1} H = \mathcal{D}^{(\alpha_{p+1})}_{i+1}(\mathcal{D}^{(\alpha)}_{i+1} H)
$$
\noindent with the intuitive notation $(\alpha, \alpha_{p+1}) = (\alpha_1, \cdots, \alpha_{p+1})$.

Throughout the article, we will use the following notation for a certain type of conditional expectation that will be frequently employed. For any $X\in L^{1}(\mathbb{P})$ and any $i \in \left\{ 0, \cdots, n\right\}$,
$$
\mathbb{E}_{i, n}[X] = \mathbb{E}[X | \mathcal{G}_i, \tau^{n+1}, N_T= n]
$$

\noindent where we recall that we employ the notation $\tau^{n+1}=(\tau_0, \cdots, \tau_{n+1})$. Having the above definitions and notations at hand, the following duality formula is satisfied: for any non-empty multi-index $\alpha$ of length $p$, with $\alpha_i \in \left\{1, 2\right\}$, for any $i \in \left\{1, \cdots, p\right\}$, $p$ being a positive integer, it holds
\begin{equation}
\mathbb{E}_{i,n}\Big[\mathcal{D}^{(\alpha)}_{i+1} f(\bar{X}_{i+1}, \bar{Y}_{i+1}) H \Big] = \mathbb{E}_{i,n}\Big[ f(\bar{X}_{i+1}, \bar{Y}_{i+1}) \mathcal{I}^{(\alpha)}_{i+1}(H) \Big]. \label{duality:formula}
\end{equation}

 In order to obtain explicit norm estimates for random variables in $\mathbb{S}_{i,n}(\bar{X}, \bar{Y})$, it is useful to define for $H \in \mathbb{S}_{i,n}(\bar{X}, \bar{Y})$, $i\in \left\{0, \cdots, n\right\}$ and $p\geq1$
$$
\|H\|^{p}_{p, i, n} = \mathbb{E}_{i,n}[|H|^p].
$$

We will also employ a chain rule formula for the integral operators defined above.
\begin{lem}\label{lem:chain:rule:formula} Let $H \equiv H(\bar{X}_i, \bar{Y}_i, \bar{X}_{i+1}, \bar{Y}_{i+1}, \zeta^{n+1}) \in \mathbb{S}_{i, n}(\bar{X}, \bar{Y})$, for some $i\in \left\{0, \cdots, n\right\}$. The following chain rule formulae hold for any $(\alpha_1,\alpha_2) \in \left\{1,2\right\}^2$
\begin{align}
\partial_{\bar{X}_{i}}\mathcal{I}^{(\alpha_1)}_{i+1}(H) & = \mathcal{I}^{(\alpha_1)}_{i+1}(\partial_{\bar{X}_i} H) , \quad \partial_{\bar{X}_{i}}\mathcal{I}^{(\alpha_1,\alpha_2)}_{i+1}(H) = \mathcal{I}^{(\alpha_1,\alpha_2)}_{i+1}(\partial_{\bar{X}_i} H).  \label{chain:rule:formula:integral:operators:first:coordinate:deriv}
\end{align}
Moreover, one has
\begin{align}
\partial_{\bar{Y}_{i}}\mathcal{I}^{(1)}_{i+1}(H) &=  \mathcal{I}^{(1)}_{i+1}(\partial_{\bar{Y}_i}H) - \frac{\sigma'_{S, i}}{\sigma_{S, i}} \mathcal{I}^{(1)}_{i+1}(H) -  \frac{\rho'_i}{1-\rho^2_i} \frac{\sigma_{Y, i}}{\sigma_{S, i}}  \mathcal{I}^{(2)}_{i+1}(H), 
\label{chain:rule:formula:first:order:integral:operator:1:second:coordinate:deriv} \\
 \partial_{\bar{Y}_i} \mathcal{I}^{(2)}_{i+1}(H) &= \mathcal{I}^{(2)}_{i+1}(\partial_{\bar{Y}_i}H) - \left(\frac{\sigma'_{Y,i}}{\sigma_{Y,i}} - \frac{\rho'_i \rho_i}{1-\rho^2_i} \right) \mathcal{I}^{(2)}_{i+1}(H),\label{chain:rule:formula:first:order:integral:operator:2:second:coordinate:deriv} \\
 \partial_{\bar{Y}_{i}}\mathcal{I}^{(1,1)}_{i+1}(H) & = \mathcal{I}^{(1,1)}_{i+1}(\partial_{\bar{Y}_i}H) - 2 \frac{\sigma'_{S,i}}{\sigma_{S,i}} \mathcal{I}^{(1,1)}_{i+1}(H) -  \frac{\rho'_i}{1-\rho^2_i}  \frac{\sigma_{Y, i}}{\sigma_{S, i}}  \left( \mathcal{I}^{(1, 2)}_{i+1}(H) +  \mathcal{I}^{(2,1)}_{i+1}(H)\right) , \label{chain:rule:formula:second:order:integral:operator:1:second:coordinate:deriv}\\
  \partial_{\bar{Y}_{i}}\mathcal{I}^{(2,2)}_{i+1}(H) & =  \mathcal{I}^{(2,2)}_{i+1}(\partial_{\bar{Y}_i}H) - 2 \left(\frac{\sigma'_{Y,i}}{\sigma_{Y,i}} - \frac{\rho'_i \rho_i}{1-\rho^2_i} \right) \mathcal{I}^{(2, 2)}_{i+1}(H),\label{chain:rule:formula:second:order:integral:operator:2:second:coordinate:deriv}\\
    \partial_{\bar{Y}_{i}}\mathcal{I}^{(1,2)}_{i+1}(H) & = \mathcal{I}^{(1,2)}_{i+1}(\partial_{\bar{Y}_i}H) - \left(\frac{\sigma'_{S,i}}{\sigma_{S,i}} + \frac{\sigma'_{Y,i}}{\sigma_{Y,i}} - \frac{\rho'_i \rho_i}{1-\rho^2_i} \right) \mathcal{I}^{(1, 2)}_{i+1}(H) -  \frac{\rho'_i}{1-\rho^2_i} \frac{\sigma_{Y, i}}{\sigma_{S, i}}  \mathcal{I}^{(2, 2)}_{i+1}(H).
    \label{chain:rule:formula:second:order:integral:operator:cross:second:coordinate:deriv}
\end{align}

\end{lem}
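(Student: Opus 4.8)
My plan is to establish all the identities by direct computation, exploiting two structural facts: the flow derivatives $\partial_{\bar{X}_i}$, $\partial_{\bar{Y}_i}$ obey the Leibniz rule on $\mathbb{S}_{i,n}(\bar{X},\bar{Y})$, and they annihilate the driving increments $Z^1_{i+1}$, $Z^2_{i+1}$ (this is the very meaning of a flow derivative: one differentiates the initial condition with the noise frozen). To keep the bookkeeping under control, I would first simplify \eqref{integral:operator:1}--\eqref{integral:operator:2} to the compact form $\mathcal{I}^{(1)}_{i+1}(H)=H\,W^{(1)}_{i+1}-\mathcal{D}^{(1)}_{i+1}H$ and $\mathcal{I}^{(2)}_{i+1}(H)=H\,W^{(2)}_{i+1}-\mathcal{D}^{(2)}_{i+1}H$, where $W^{(1)}_{i+1}=\sigma_{S,i}^{-1}\big(Z^1_{i+1}-\rho_i(1-\rho_i^2)^{-1/2}Z^2_{i+1}\big)$ and $W^{(2)}_{i+1}=\big(\sigma_{Y,i}\sqrt{1-\rho_i^2}\,\big)^{-1}Z^2_{i+1}$, and record the inverse relations $Z^1_{i+1}=\sigma_{S,i}W^{(1)}_{i+1}+\rho_i\sigma_{Y,i}W^{(2)}_{i+1}$, $Z^2_{i+1}=\sigma_{Y,i}\sqrt{1-\rho_i^2}\,W^{(2)}_{i+1}$ obtained by inverting \eqref{euler:scheme}. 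Expressing $Z^1_{i+1}$, $Z^2_{i+1}$ as elements of $\mathbb{S}_{i,n}(\bar{X},\bar{Y})$ and combining with \eqref{partial:flow:deriv:Y:bar:process}--\eqref{partial:Y:flow:deriv:X:bar:process} together with $\partial_{\bar{X}_i}\bar{X}_{i+1}=1$, $\partial_{\bar{X}_i}\bar{Y}_{i+1}=0$, I would check that $\partial_{\bar{X}_i}Z^j_{i+1}=\partial_{\bar{Y}_i}Z^j_{i+1}=0$ for $j\in\{1,2\}$; since $\sigma_{S,i},\sigma_{Y,i},\rho_i,m_i$ depend on $\bar{Y}_i$ only and are smooth with the required derivatives under \A{AR}--\A{ND}, this yields in particular $\partial_{\bar{X}_i}W^{(1)}_{i+1}=\partial_{\bar{X}_i}W^{(2)}_{i+1}=0$.

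With this in hand, \eqref{chain:rule:formula:integral:operators:first:coordinate:deriv} is immediate: acting on function representations, $\partial_{\bar{X}_i}$ is the constant-coefficient operator $\partial_1+\partial_3$, so it commutes with $\mathcal{D}^{(1)}_{i+1}=\partial_3$ and $\mathcal{D}^{(2)}_{i+1}=\partial_4$; together with the Leibniz rule and $\partial_{\bar{X}_i}W^{(\alpha_1)}_{i+1}=0$ this gives $\partial_{\bar{X}_i}\mathcal{I}^{(\alpha_1)}_{i+1}(H)=(\partial_{\bar{X}_i}H)W^{(\alpha_1)}_{i+1}-\mathcal{D}^{(\alpha_1)}_{i+1}(\partial_{\bar{X}_i}H)=\mathcal{I}^{(\alpha_1)}_{i+1}(\partial_{\bar{X}_i}H)$, and the second equality in \eqref{chain:rule:formula:integral:operators:first:coordinate:deriv} follows by applying this once more (using that $\mathcal{I}^{(\alpha_1)}_{i+1}(H)$ again lies in $\mathbb{S}_{i,n}(\bar{X},\bar{Y})$).

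The $\bar{Y}_i$-derivative identities are the substantial part. Writing $\partial_{\bar{Y}_i}=\partial_2+c_1\partial_3+c_2\partial_4$ on function representations, with $c_1:=\partial_{\bar{Y}_i}\bar{X}_{i+1}$, $c_2:=\partial_{\bar{Y}_i}\bar{Y}_{i+1}$ given by \eqref{partial:Y:flow:deriv:X:bar:process}--\eqref{partial:flow:deriv:Y:bar:process}, I note that, in contrast with $\partial_{\bar{X}_i}$, this operator does \emph{not} commute with $\mathcal{D}^{(1)}_{i+1}$, $\mathcal{D}^{(2)}_{i+1}$, because $c_1$, $c_2$ depend on $\bar{X}_{i+1}$, $\bar{Y}_{i+1}$ through the $Z$'s. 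Computing the two commutators I get $\partial_{\bar{Y}_i}\mathcal{D}^{(k)}_{i+1}H=\mathcal{D}^{(k)}_{i+1}\partial_{\bar{Y}_i}H-(\mathcal{D}^{(k)}_{i+1}c_1)\mathcal{D}^{(1)}_{i+1}H-(\mathcal{D}^{(k)}_{i+1}c_2)\mathcal{D}^{(2)}_{i+1}H$ for $k=1,2$, where substituting the inverted dynamics into $c_1$, $c_2$ gives $\mathcal{D}^{(1)}_{i+1}c_1=\sigma'_{S,i}/\sigma_{S,i}$, $\mathcal{D}^{(1)}_{i+1}c_2=\sigma_{Y,i}\rho'_i/(\sigma_{S,i}(1-\rho_i^2))$, $\mathcal{D}^{(2)}_{i+1}c_1=0$ and $\mathcal{D}^{(2)}_{i+1}c_2=\sigma'_{Y,i}/\sigma_{Y,i}-\rho_i\rho'_i/(1-\rho_i^2)$. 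Separately, using $\partial_{\bar{Y}_i}Z^j_{i+1}=0$ and the inverse relations to differentiate the $\bar{Y}_i$-dependent factors of the weights, I obtain $\partial_{\bar{Y}_i}W^{(1)}_{i+1}=-(\sigma'_{S,i}/\sigma_{S,i})W^{(1)}_{i+1}-(\rho'_i\sigma_{Y,i}/(\sigma_{S,i}(1-\rho_i^2)))W^{(2)}_{i+1}$ and $\partial_{\bar{Y}_i}W^{(2)}_{i+1}=-(\sigma'_{Y,i}/\sigma_{Y,i}-\rho_i\rho'_i/(1-\rho_i^2))W^{(2)}_{i+1}$. Inserting these into $\partial_{\bar{Y}_i}\mathcal{I}^{(1)}_{i+1}(H)=(\partial_{\bar{Y}_i}H)W^{(1)}_{i+1}+H\,\partial_{\bar{Y}_i}W^{(1)}_{i+1}-\partial_{\bar{Y}_i}\mathcal{D}^{(1)}_{i+1}H$ and regrouping, the resulting terms assemble exactly into $\mathcal{I}^{(1)}_{i+1}(\partial_{\bar{Y}_i}H)$, $-(\sigma'_{S,i}/\sigma_{S,i})\mathcal{I}^{(1)}_{i+1}(H)$ and $-(\rho'_i/(1-\rho_i^2))(\sigma_{Y,i}/\sigma_{S,i})\mathcal{I}^{(2)}_{i+1}(H)$, which is \eqref{chain:rule:formula:first:order:integral:operator:1:second:coordinate:deriv}; \eqref{chain:rule:formula:first:order:integral:operator:2:second:coordinate:deriv} follows identically, the commutator there contributing only a $\mathcal{D}^{(2)}_{i+1}$-term since $\mathcal{D}^{(2)}_{i+1}c_1=0$. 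The main obstacle of the whole proof lies precisely here: keeping track of the commutator corrections between the initial-value flow derivative $\partial_{\bar{Y}_i}$ and the terminal-derivative operators $\mathcal{D}^{(1)}_{i+1}$, $\mathcal{D}^{(2)}_{i+1}$, and verifying that they merge with the derivatives of the Malliavin weights so as to rebuild exactly the integral operators on the right-hand sides.

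Finally, the second-order identities \eqref{chain:rule:formula:second:order:integral:operator:1:second:coordinate:deriv}--\eqref{chain:rule:formula:second:order:integral:operator:cross:second:coordinate:deriv} follow by iterating \eqref{chain:rule:formula:first:order:integral:operator:1:second:coordinate:deriv}--\eqref{chain:rule:formula:first:order:integral:operator:2:second:coordinate:deriv}, using $\mathcal{I}^{(\alpha_1,\alpha_2)}_{i+1}=\mathcal{I}^{(\alpha_2)}_{i+1}\circ\mathcal{I}^{(\alpha_1)}_{i+1}$ and $\mathcal{I}^{(\alpha_1)}_{i+1}(H)\in\mathbb{S}_{i,n}(\bar{X},\bar{Y})$. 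For instance, to obtain \eqref{chain:rule:formula:second:order:integral:operator:1:second:coordinate:deriv} I would write $\partial_{\bar{Y}_i}\mathcal{I}^{(1,1)}_{i+1}(H)=\partial_{\bar{Y}_i}\mathcal{I}^{(1)}_{i+1}\big(\mathcal{I}^{(1)}_{i+1}(H)\big)$, apply \eqref{chain:rule:formula:first:order:integral:operator:1:second:coordinate:deriv} with $H$ replaced by $\mathcal{I}^{(1)}_{i+1}(H)$, then apply \eqref{chain:rule:formula:first:order:integral:operator:1:second:coordinate:deriv} once more inside the resulting term $\mathcal{I}^{(1)}_{i+1}\big(\partial_{\bar{Y}_i}\mathcal{I}^{(1)}_{i+1}(H)\big)$, and collect coefficients; the factor $2\sigma'_{S,i}/\sigma_{S,i}$ appears because the $\sigma'_{S,i}/\sigma_{S,i}$-correction is generated at both applications, while the sum $\mathcal{I}^{(1,2)}_{i+1}(H)+\mathcal{I}^{(2,1)}_{i+1}(H)$ arises because the $\mathcal{I}^{(2)}_{i+1}$-correction is generated once at the inner and once at the outer application. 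The remaining second-order formulae are obtained in the same manner.
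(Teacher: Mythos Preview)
Your proposal is correct and follows essentially the same route as the paper: both arguments reduce to the Leibniz rule, the computation of $\partial_{\bar{Y}_i}$ on the Malliavin weights $\mathcal{I}^{(k)}_{i+1}(1)$, and the commutator between $\partial_{\bar{Y}_i}$ and $\mathcal{D}^{(k)}_{i+1}$, then iterate for the second-order formulae. Your organizational choices---simplifying the weights to $W^{(1)}_{i+1},W^{(2)}_{i+1}$ and framing the computation around the identity $\partial_{\bar{X}_i}Z^j_{i+1}=\partial_{\bar{Y}_i}Z^j_{i+1}=0$---streamline the bookkeeping somewhat, but the underlying mechanism and all key computations (in particular $\mathcal{D}^{(1)}_{i+1}c_1=\sigma'_{S,i}/\sigma_{S,i}$, $\mathcal{D}^{(1)}_{i+1}c_2=\sigma_{Y,i}\rho'_i/(\sigma_{S,i}(1-\rho_i^2))$, $\mathcal{D}^{(2)}_{i+1}c_2=\sigma'_{Y,i}/\sigma_{Y,i}-\rho_i\rho'_i/(1-\rho_i^2)$) coincide with those in the paper.
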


\begin{proof} Observe that from the very definitions \eqref{integral:operator:1} and \eqref{integral:operator:2}, one directly gets 
$$
\partial_{\bar{X}_i} \mathcal{I}^{(1)}_{i+1}(1) = \partial_{\bar{X}_i} \mathcal{I}^{(2)}_{i+1}(1) = 0
$$
\noindent while, also by direct computation, we obtain 
\begin{align*}
\partial_{\bar{Y}_i}  \mathcal{I}^{(1)}_{i+1}(1) & =  -\frac{\sigma'_{S,i}}{\sigma_{S,i}} \mathcal{I}^{(1)}_{i+1}(1) - \frac{\rho'_i}{1-\rho^2_i} \frac{\sigma_{Y, i}}{\sigma_{S, i}} \mathcal{I}^{(2)}_{i+1}(1), \\ \partial_{\bar{Y}_i}  \mathcal{I}^{(2)}_{i+1}(1) & = - \left(\frac{\sigma'_{Y,i}}{\sigma_{Y,i}} - \frac{\rho'_i \rho_i}{1-\rho^2_i}\right)\mathcal{I}^{(2)}_{i+1}(1).
\end{align*}
We thus deduce
\begin{align*}
\partial_{\bar{X}_i} \mathcal{I}^{(\alpha_1)}_{i+1}(H) & = \partial_{\bar{X}_i} H \mathcal{I}^{(\alpha_1)}_{i+1}(1) + H \partial_{\bar{X}_i} \mathcal{I}^{(\alpha_1)}_{i+1}(1) - \partial_{\bar{X}_i}\mathcal{D}^{(\alpha_1)}_{i+1}H \\
& = \partial_{\bar{X}_i} H\mathcal{I}^{(\alpha_1)}_{i+1}(1) - \mathcal{D}^{(\alpha_1)}_{i+1}( \partial_{\bar{X}_i} H) \\
& = \mathcal{I}^{(\alpha_1)}_{i+1}(\partial_{\bar{X}_i} H)
\end{align*}
\noindent where we used the fact $\mathcal{D}^{(\alpha_1)}_{i+1}\partial_{\bar{X}_i} H = \partial_{\bar{X}_i} \mathcal{D}^{(\alpha_1)}_{i+1} H $ which easily follows by direct computation. As a consequence, it is readily seen
\begin{align*}
\partial_{\bar{X}_{i}}\mathcal{I}^{(\alpha_1,\alpha_2)}_{i+1}(H) &  = \partial_{\bar{X}_i} \mathcal{I}^{(\alpha_2)}_{i+1}(\mathcal{I}^{(\alpha_1)}_{i+1}(H)) = \mathcal{I}^{(\alpha_2)}_{i+1}( \partial_{\bar{X}_i}\mathcal{I}^{(\alpha_1)}_{i+1}(H))  = \mathcal{I}^{(\alpha_2)}_{i+1}(\mathcal{I}^{(\alpha_1)}_{i+1}(\partial_{\bar{X}_i}H))= \mathcal{I}^{(\alpha_1,\alpha_2)}_{i+1}(\partial_{\bar{X}_i}H).
\end{align*}

This concludes the proof of \eqref{chain:rule:formula:integral:operators:first:coordinate:deriv}. The chain rule formulae \eqref{chain:rule:formula:first:order:integral:operator:1:second:coordinate:deriv}, \eqref{chain:rule:formula:first:order:integral:operator:2:second:coordinate:deriv}, \eqref{chain:rule:formula:second:order:integral:operator:1:second:coordinate:deriv}, \eqref{chain:rule:formula:second:order:integral:operator:2:second:coordinate:deriv} and \eqref{chain:rule:formula:second:order:integral:operator:cross:second:coordinate:deriv} follow from similar arguments. Let us prove \eqref{chain:rule:formula:first:order:integral:operator:1:second:coordinate:deriv} and \eqref{chain:rule:formula:first:order:integral:operator:2:second:coordinate:deriv}. The proofs of \eqref{chain:rule:formula:second:order:integral:operator:1:second:coordinate:deriv}, \eqref{chain:rule:formula:second:order:integral:operator:2:second:coordinate:deriv} and \eqref{chain:rule:formula:second:order:integral:operator:cross:second:coordinate:deriv} are omitted. Observe first that in general $\mathcal{D}^{(\alpha_1)}_{i+1}\partial_{\bar{Y}_i} H \neq \partial_{\bar{Y}_i} \mathcal{D}^{(\alpha_1)}_{i+1} H $. Indeed, by standard computations, it holds 
\begin{align*}
\partial_{\bar{Y}_i} \mathcal{D}^{(1)}_{i+1}H & = \partial_{\bar{Y}_i} \partial_{\bar{X}_{i+1}} h(\bar{X}_i, \bar{Y}_i, \bar{X}_{i+1}, \bar{Y}_{i+1}, \zeta^{n}) \\
& = \partial^2_{2, 3} h(\bar{X}_i, \bar{Y}_i, \bar{X}_{i+1}, \bar{Y}_{i+1}, \zeta^{n}) + \partial^2_3 h(\bar{X}_i, \bar{Y}_i, \bar{X}_{i+1}, \bar{Y}_{i+1}, \zeta^{n}) \partial_{\bar{Y}_i}\bar{X}_{i+1} + \partial^2_{4,3} h(\bar{X}_i, \bar{Y}_i, \bar{X}_{i+1}, \bar{Y}_{i+1}, \zeta^{n}) \partial_{\bar{Y}_i}\bar{Y}_{i+1},\\
\mathcal{D}^{(1)}_{i+1} \partial_{\bar{Y}_i}H &= \partial_{\bar{X}_{i+1}} \partial_{\bar{Y}_i} h(\bar{X}_i, \bar{Y}_i, \bar{X}_{i+1}, \bar{Y}_{i+1}, \zeta^{n}) \\
&= \partial_{\bar{X}_{i+1}} (\partial_{2} h(\bar{X}_i, \bar{Y}_i, \bar{X}_{i+1}, \bar{Y}_{i+1}, \zeta^{n}) + \partial_3 h(\bar{X}_i, \bar{Y}_i, \bar{X}_{i+1}, \bar{Y}_{i+1}, \zeta^{n}) \partial_{\bar{Y}_i}\bar{X}_{i+1} + \partial_{4} h(\bar{X}_i, \bar{Y}_i, \bar{X}_{i+1}, \bar{Y}_{i+1}, \zeta^{n}) \partial_{\bar{Y}_i}\bar{Y}_{i+1}) \\
&= \partial^2_{3,2} h(\bar{X}_i, \bar{Y}_i, \bar{X}_{i+1}, \bar{Y}_{i+1}, \zeta^{n}) + \partial^2_3 h(\bar{X}_i, \bar{Y}_i, \bar{X}_{i+1}, \bar{Y}_{i+1}, \zeta^{n}) \partial_{\bar{Y}_i}\bar{X}_{i+1} + \partial_3 h(\bar{X}_i, \bar{Y}_i, \bar{X}_{i+1}, \bar{Y}_{i+1}, \zeta^{n}) \partial_{\bar{X}_{i+1}} \partial_{\bar{Y}_i}\bar{X}_{i+1} \\
& \quad + \partial^2_{3,4} h(\bar{X}_i, \bar{Y}_i, \bar{X}_{i+1}, \bar{Y}_{i+1}, \zeta^{n}) \partial_{\bar{Y}_i}\bar{Y}_{i+1} +  \partial_{4} h(\bar{X}_i, \bar{Y}_i, \bar{X}_{i+1}, \bar{Y}_{i+1}, \zeta^{n}) \partial_{\bar{X}_{i+1}}\partial_{\bar{Y}_i}\bar{Y}_{i+1} \\
&= \partial_{\bar{Y}_i} \mathcal{D}^{(1)}_{i+1}H + \partial_3 h(\bar{X}_i, \bar{Y}_i, \bar{X}_{i+1}, \bar{Y}_{i+1}, \zeta^{n}) \partial_{\bar{X}_{i+1}} \partial_{\bar{Y}_i}\bar{X}_{i+1} +\partial_{4} h(\bar{X}_i, \bar{Y}_i, \bar{X}_{i+1}, \bar{Y}_{i+1}, \zeta^{n}) \partial_{\bar{X}_{i+1}}\partial_{\bar{Y}_i}\bar{Y}_{i+1} \\
&= \partial_{\bar{Y}_i} \mathcal{D}^{(1)}_{i+1}H + \mathcal{D}^{(1)}_{i+1}H \partial_{\bar{X}_{i+1}} \partial_{\bar{Y}_i}\bar{X}_{i+1} +  \mathcal{D}^{(2)}_{i+1}H \partial_{\bar{X}_{i+1}}\partial_{\bar{Y}_i}\bar{Y}_{i+1} \\
& = \partial_{\bar{Y}_i} \mathcal{D}^{(1)}_{i+1}H + \frac{\sigma'_{S,i}}{\sigma_{S,i}} \mathcal{D}^{(1)}_{i+1}H + \frac{\rho'_i}{1-\rho^2_i} \frac{\sigma_{Y, i}}{\sigma_{S, i}}  \mathcal{D}^{(2)}_{i+1}H
\end{align*}
\noindent where we used the two identities $\partial_{\bar{X}_{i+1}} \partial_{\bar{Y}_i}\bar{X}_{i+1} =  \frac{\sigma'_{S, i}}{\sigma_{S, i}}$ and $\partial_{\bar{X}_{i+1}}\partial_{\bar{Y}_i}\bar{Y}_{i+1} = \frac{\rho'_i}{1-\rho^2_i} \frac{\sigma_{Y, i}}{\sigma_{S, i}} $ which readily stems from \eqref{partial:flow:deriv:Y:bar:process}, \eqref{partial:Y:flow:deriv:X:bar:process} and the dynamics \eqref{euler:scheme}.

From \eqref{integral:operator:1} and the previous identity, we thus obtain
\begin{align*}
\partial_{\bar{Y}_{i}}\mathcal{I}^{(1)}_{i+1}(H) &  = \partial_{\bar{Y}_i}\mathcal{I}^{(1)}_{i+1}(1) H + \mathcal{I}^{(1)}_{i+1}(1) \partial_{\bar{Y}_i}H - \partial_{\bar{Y}_i}\mathcal{D}^{(1)}_{i+1}H \\
& =  - \frac{\sigma'_{S,i}}{\sigma_{S,i}} \mathcal{I}^{(1)}_{i+1}(1) H -  \frac{\rho'_i}{1-\rho^2_i} \frac{\sigma_{Y, i}}{\sigma_{S, i}} \mathcal{I}^{(2)}_{i+1}(1)H  + \mathcal{I}^{(1)}_{i+1}(1) \partial_{\bar{Y}_i}H -  \mathcal{D}^{(1)}_{i+1} \partial_{\bar{Y}_i}H  + \frac{\sigma'_{S,i}}{\sigma_{S,i}} \mathcal{D}^{(1)}_{i+1}H \\
&  \quad +\frac{\rho'_i}{1-\rho^2_i} \frac{\sigma_{Y, i}}{\sigma_{S, i}} \mathcal{D}^{(2)}_{i+1}H\\
& = -\frac{\sigma'_{S,i}}{\sigma_{S,i}} \left(\mathcal{I}^{(1)}_{i+1}(1) H - \mathcal{D}^{(1)}_{i+1}H\right) + \mathcal{I}^{(1)}_{i+1}(1) \partial_{\bar{Y}_i}H -  \mathcal{D}^{(1)}_{i+1} \partial_{\bar{Y}_i}H -  \frac{\rho'_i}{1-\rho^2_i} \frac{\sigma_{Y, i}}{\sigma_{S, i}} \left( \mathcal{I}^{(2)}_{i+1}(1)H - \mathcal{D}^{(2)}_{i+1}H \right) \\
& =  \mathcal{I}^{(1)}_{i+1}(\partial_{\bar{Y}_i}H) - \frac{\sigma'_{S, i}}{\sigma_{S, i}} \mathcal{I}^{(1)}_{i+1}(H) -  \frac{\rho'_i}{1-\rho^2_i} \frac{\sigma_{Y, i}}{\sigma_{S, i}}  \mathcal{I}^{(2)}_{i+1}(H).
\end{align*}
Similarly, after some algebraic manipulations using \eqref{euler:scheme} and \eqref{partial:flow:deriv:Y:bar:process}, we get $\partial_{\bar{Y}_{i+1}} \partial_{\bar{Y}_i} \bar{Y}_{i+1} = \frac{\sigma'_{Y, i}}{\sigma_{Y, i}} - \frac{\rho'_i \rho_i}{1-\rho_i^2}$ so that
\begin{align*}
\mathcal{D}^{(2)}_{i+1} \partial_{\bar{Y}_i}H = \partial_{\bar{Y}_i} \mathcal{D}^{(2)}_{i+1}H + \mathcal{D}^{(2)}_{i+1}H \partial_{\bar{Y}_{i+1}} \partial_{\bar{Y}_i}\bar{Y}_{i+1} = \partial_{\bar{Y}_i} \mathcal{D}^{(2)}_{i+1}H + \left(\frac{\sigma'_{Y,i}}{\sigma_{Y, i}}- \frac{\rho'_i \rho_i}{1-\rho_i^2}\right) \mathcal{D}^{(2)}_{i+1}H 
\end{align*}
\noindent so that, omitting some technical details, we get
\begin{align*}
\partial_{\bar{Y}_i} \mathcal{I}^{(2)}_{i+1}(H) & =  \mathcal{I}^{(2)}_{i+1}(\partial_{\bar{Y}_i}H) - \left(\frac{\sigma'_{Y,i}}{\sigma_{Y,i}} - \frac{\rho'_i \rho_i}{1-\rho^2_i} \right) \mathcal{I}^{(2)}_{i+1}(1) H  + \left(\frac{\sigma'_{Y,i}}{\sigma_{Y, i}} - \frac{\rho'_i \rho_i}{1-\rho^2_i} \right)\mathcal{D}^{(2)}_{i+1}H\\
& = \mathcal{I}^{(2)}_{i+1}(\partial_{\bar{Y}_i}H) - \left(\frac{\sigma'_{Y,i}}{\sigma_{Y,i}} - \frac{\rho'_i \rho_i}{1-\rho^2_i} \right) \mathcal{I}^{(2)}_{i+1}(H).
\end{align*}
The identities  \eqref{chain:rule:formula:second:order:integral:operator:1:second:coordinate:deriv}, \eqref{chain:rule:formula:second:order:integral:operator:2:second:coordinate:deriv} and \eqref{chain:rule:formula:second:order:integral:operator:cross:second:coordinate:deriv} eventually follows from \eqref{chain:rule:formula:first:order:integral:operator:1:second:coordinate:deriv} and \eqref{chain:rule:formula:first:order:integral:operator:2:second:coordinate:deriv} using some simple algebraic computations.
\end{proof}

We conclude this section by introducing the following space of random variables which satisfy some time regularity estimates.
\begin{definition} Let $\ell \in \mathbb{Z}$ and $n \in \mathbb{N}$. For any $i\in \left\{0, \cdots, n\right\}$, we define the space $\mathbb{M}_{i, n}(\bar{X}, \bar{Y}, \ell/2)$ as the set of finite random variables $H \in \mathbb{S}_{i, n}(\bar{X}, \bar{Y})$ satisfying the following time regularity estimate: for any $p\geq1$, for any $c>0$, there exists some positive constants $C:=C(T), \, c'$, $T\mapsto C(T)$ being non-decreasing and $c'$ being independent of $T$, such that for any $(x_{i}, y_{i}, x_{i+1}, y_{i+1}, \s_{n+1})  \in \mathbb{R}^4 \times \Delta_{n+1}(T)$,
\begin{align}
|H(x_{i}, y_{i}, x_{i+1}, y_{i+1}, \s_{n+1})|^p & \bar{q}_c(s_{i+1} - s_{i}, x_{i}, y_{i}, x_{i+1}, y_{i+1}) \label{time:regularity:estimate:space:inequality} \\
& \leq C (s_{i+1}-s_{i})^{p \ell/2} \bar{q}_{c'}(s_{i+1} - s_{i}, x_{i}, y_{i}, x_{i+1}, y_{i+1}) \nonumber
\end{align}
\noindent where the density function $\mathbb{R}^2 \ni (x_{i+1}, y_{i+1})\mapsto  \bar{q}_{c}(s_{i+1} - s_{i}, x_{i}, y_{i}, x_{i+1}, y_{i+1})$ is defined in Lemma \ref{definition:density:proxy}.
\end{definition}

We again remark that since the space $\mathbb{M}_{i, n}(\bar{X}, \bar{Y}, \ell/2)$ is a subset of $ \mathbb{S}_{i, n}(\bar{X}, \bar{Y})$, when we say that a random variable $\mathbb{M}_{i, n}(\bar{X}, \bar{Y}, \ell/2)$ this statement is always understood on the set $\left\{N_T=n\right\}$. 

Before proceeding, let us provide a simple example of some random variables that belong to the aforementioned space. From \eqref{integral:operator:1} and the dynamics \eqref{euler:scheme} of the Markov chain $(\bar{X}, \bar{Y})$, it holds
\begin{align*}
\mathcal{I}^{(1)}_{i+1}(1) & = \Big[\frac{\bar{X}_{i+1}-\bar{X}_{i}-(r(\zeta_{i+1}-\zeta_i)-\frac12 a_{S, i})}{a_{S,i}(1-\rho_i^2)} - \frac{\rho_i}{1-\rho_i^2} \frac{\bar{Y}_{i+1}-m_{i}}{\sigma_{S,i} \sigma_{Y,i}} \Big],\\
\mathcal{I}^{(1,1)}_{i+1}(1) &= (\mathcal{I}^{(1)}_{i+1}(1))^2 - \mathcal{D}^{1}_{i+1}(\mathcal{I}^{(1)}_{i+1}(1)) = (\mathcal{I}^{(1)}_{i+1}(1))^2 - \frac{1}{a_{S,i}(1-\rho_i^2)},
\end{align*}

\noindent so that, $\mathcal{I}^{(1)}_{i+1}(1)$ and $\mathcal{I}^{(1,1)}_{i+1}(1)$ belong to $\mathbb{S}_{i,n}(\bar{X}, \bar{Y})$. Moreover, under \A{ND}, for any $p\geq1$, it holds
$$
\Big|\mathcal{I}^{(1)}_{i+1}(1)(x_{i}, y_{i}, x_{i+1}, y_{i+1}, \s_{n+1})\Big|^p \leq C\Big(1+ \frac{|x_{i+1}-x_{i}|^p}{(s_{i+1}-s_{i})^p} +  \frac{|y_{i+1}-m_{i}(y_{i})|^p}{(s_{i+1}-s_{i})^p}\Big)
$$
\noindent and similarly,
$$
\Big|\mathcal{I}^{(1,1)}_{i+1}(1)(x_{i}, y_{i}, x_{i+1}, y_{i+1}, \s_{n+1})\Big|^p \leq C\Big(1+ \frac{1}{(s_{i+1}-s_{i})^p} + \frac{|x_{i+1}-x_{i}|^{2p}}{(s_{i+1}-s_{i})^{2p}} +  \frac{|y_{i+1}-m_{i}(y_{i})|^{2p}}{(s_{i+1}-s_{i})^{2p}}\Big).
$$

Hence, from the space-time inequality \eqref{space:time:inequality}, for any $c>0$ and any $c'>c$, it holds
$$
\Big|\mathcal{I}^{(1)}_{i+1}(1)(x_{i}, y_{i}, x_{i+1}, y_{i+1}, \s_{n+1})\Big|^p \bar{q}_c(s_{i+1} - s_{i}, x_{i}, y_{i}, x_{i+1}, y_{i+1}) \leq C  (s_{i+1}-s_{i})^{-p/2} \bar{q}_{c'}(s_{i+1} - s_{i}, x_{i}, y_{i}, x_{i+1}, y_{i+1})
$$
\noindent and
$$
\Big|\mathcal{I}^{(1,1)}_{i+1}(1)(x_{i}, y_{i}, x_{i+1}, y_{i+1}, \s_{n+1})\Big|^p \bar{q}_c(s_{i+1} - s_{i}, x_{i}, y_{i}, x_{i+1}, y_{i+1}) \leq C (s_{i+1}-s_{i})^{-p} \bar{q}_{c'}(s_{i+1} - s_{i}, x_{i}, y_{i}, x_{i+1}, y_{i+1})
$$

\noindent for some positive constant $C:=C(T)$, $T\mapsto C(T)$ being non-decreasing. We thus conclude that $\mathcal{I}^{(1)}_{i+1}(1) \in \mathbb{M}_{i,n}(\bar{X},\bar{Y}, -1/2)$ and $\mathcal{I}^{(1,1)}_{i+1}(1) \in \mathbb{M}_{i,n}(\bar{X},\bar{Y}, -1)$ for any $i\in \left\{0, \cdots, n\right\}$.

A straightforward generalization of the above example is the following property that will be frequently used in the sequel. We omit its proof.
\begin{lem}\label{lem:time:degeneracy:property}
Let $n\in \mathbb{N}$ and $i \in \left\{0, \cdots, n\right\}$. Assume that $H \in \mathbb{M}_{i,n}(\bar{X}, \bar{Y}, \ell/2)$ and $\mathcal{D}^{(\alpha_1)}_i H \in \mathbb{M}_{i,n}(\bar{X}, \bar{Y}, \ell'/2)$, for some $(\ell, \ell') \in \mathbb{Z}^2$ and some $\alpha_1 \in \left\{1,2\right\}$. Then, it holds $\mathcal{I}^{(\alpha_1)}_{i}(H) \in \mathbb{M}_{i,n}(\bar{X}, \bar{Y}, ((\ell-1)\wedge \ell')/2)$. Additionally, if $\mathcal{D}^{(\alpha_2)}_i H \in \mathbb{M}_{i,n}(\bar{X}, \bar{Y}, \ell'/2)$ and $\mathcal{D}^{(\alpha_1, \alpha_2)}_i H \in \mathbb{M}_{i,n}(\bar{X}, \bar{Y}, \ell''/2)$, for some $\ell'' \in \mathbb{Z}$ and $\alpha_2\in \left\{1, 2\right\}$, then it holds $\mathcal{I}^{(\alpha_1, \alpha_2)}_{i}(H) \in \mathbb{M}_{i,n}(\bar{X}, \bar{Y}, ((\ell-2)\wedge (\ell'-1)\wedge \ell'')/2)$. Finally, if $H_1 \in \mathbb{M}_{i,n}(\bar{X}, \bar{Y}, \ell_1/2)$ and  $H_2 \in \mathbb{M}_{i,n}(\bar{X}, \bar{Y}, \ell_2/2)$ for some $(\ell_1, \ell_2) \in \mathbb{Z}^2$ then $H_1 H_2  \in \mathbb{M}_{i,n}(\bar{X}, \bar{Y}, (\ell_1+\ell_2)/2) $ and $(\zeta_{i+1}-\zeta_{i}) \mathcal{I}^{(\alpha_1)}_{i}(H_1) \in \mathbb{M}_{i,n}(\bar{X}, \bar{Y}, (\ell_1+1)/2)$.
\end{lem}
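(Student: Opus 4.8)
The plan is to verify each of the stated closure properties by unwinding the definition of the spaces $\mathbb{M}_{i,n}(\bar X,\bar Y,\ell/2)$ and combining it with the chain rule formulae of Lemma \ref{lem:chain:rule:formula} together with the space-time inequality \eqref{space:time:inequality}. Note first that each assertion is a statement about the function representatives on $\{N_T=n\}$, so throughout we fix $\s_{n+1}\in\Delta_{n+1}(T)$ and argue pointwise in $(x_i,y_i,x_{i+1},y_{i+1})$, writing $\delta=s_{i+1}-s_i$ for brevity. The key observation to use repeatedly is that, by \A{ND}, all the model-dependent coefficients appearing in the integral operators — namely $a_{S,i}, a_{Y,i}, \sigma_{S,i},\sigma_{Y,i},(1-\rho_i^2)^{-1}$ and their derivatives — are, after the natural rescaling, bounded uniformly: $a_{S,i}, a_{Y,i}\in[\kappa^{-1}\delta,\kappa\delta]$, and the derivative quantities $\sigma'_{S,i}/\sigma_{S,i}$, $\sigma'_{Y,i}/\sigma_{Y,i}$, $\rho'_i/(1-\rho_i^2)$ are bounded by a constant depending only on $T$ and the structural constants, because the flow $y\mapsto m_s(y)$ has derivatives controlled on $[0,T]$ by \A{AR} (Gronwall).

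First I would handle the single-operator statement $\mathcal{I}^{(\alpha_1)}_i(H)\in\mathbb{M}_{i,n}(\bar X,\bar Y,((\ell-1)\wedge\ell')/2)$. Write $\mathcal{I}^{(\alpha_1)}_i(H)=H\,\mathcal{I}^{(\alpha_1)}_i(1)-\mathcal{D}^{(\alpha_1)}_iH$ from \eqref{integral:operator:1}–\eqref{derivative:operator:2}. For the second term, $\mathcal{D}^{(\alpha_1)}_iH\in\mathbb{M}_{i,n}(\bar X,\bar Y,\ell'/2)$ by hypothesis, and since $((\ell-1)\wedge\ell')/2\le\ell'/2$ and any member of $\mathbb{M}_{i,n}(\cdot,m/2)$ also lies in $\mathbb{M}_{i,n}(\cdot,m'/2)$ for $m'\le m$ (the time-degeneracy bound only weakens when the exponent decreases, since $\delta\le T$), this term is fine. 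For the first term, one has $\mathcal{I}^{(\alpha_1)}_i(1)\in\mathbb{M}_{i,n}(\bar X,\bar Y,-1/2)$ by exactly the computation carried out just above the statement for $\mathcal{I}^{(1)}_{i+1}(1)$ (the $\alpha_1=2$ case is entirely analogous, and the shift of index from $i+1$ to $i$ changes nothing in the argument); then invoke the product rule — the last assertion of the lemma, which I prove first — to get $H\,\mathcal{I}^{(\alpha_1)}_i(1)\in\mathbb{M}_{i,n}(\bar X,\bar Y,(\ell-1)/2)$. Taking the minimum of the two exponents yields the claim.

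Next, the two-operator statement follows by iterating: $\mathcal{I}^{(\alpha_1,\alpha_2)}_i(H)=\mathcal{I}^{(\alpha_2)}_i(\mathcal{I}^{(\alpha_1)}_i(H))$, and to apply the single-operator result to the inner random variable $\tilde H:=\mathcal{I}^{(\alpha_1)}_i(H)$ one needs to know the orders of $\tilde H$, $\mathcal{D}^{(\alpha_2)}_i\tilde H$ and this has already been established ($\tilde H$ has order $(\ell-1)\wedge\ell'$ after halving). For $\mathcal{D}^{(\alpha_2)}_i\mathcal{I}^{(\alpha_1)}_i(H)$ we use the chain rule formulae \eqref{chain:rule:formula:first:order:integral:operator:1:second:coordinate:deriv}–\eqref{chain:rule:formula:first:order:integral:operator:2:second:coordinate:deriv}: e.g. for $\alpha_1=\alpha_2=1$, $\partial_{\bar Y_i}\mathcal{I}^{(1)}_i(H)=\mathcal{I}^{(1)}_i(\partial_{\bar Y_i}H)-(\sigma'_{S,i}/\sigma_{S,i})\mathcal{I}^{(1)}_i(H)-(\rho'_i/(1-\rho_i^2))(\sigma_{Y,i}/\sigma_{S,i})\mathcal{I}^{(2)}_i(H)$; the hypothesis $\mathcal{D}^{(\alpha_1,\alpha_2)}_iH\in\mathbb{M}_{i,n}(\cdot,\ell''/2)$ controls $\mathcal{I}^{(1)}_i(\partial_{\bar Y_i}H)$'s leading piece via $\partial_{\bar Y_i}H\in\mathbb{M}_{i,n}(\cdot,\ell'/2)$ and its $\mathcal{D}$-derivative of order $\ell''/2$, giving order $((\ell'-1)\wedge\ell'')/2$; the bounded-coefficient terms give $(\ell-1)/2$ and (using $\sigma_{Y,i}/\sigma_{S,i}\le\kappa$) another $(\ell-1)/2$. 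Hence $\mathcal{D}^{(\alpha_2)}_i\mathcal{I}^{(\alpha_1)}_i(H)\in\mathbb{M}_{i,n}(\cdot,((\ell-1)\wedge(\ell'-1)\wedge\ell'')/2)$; feeding $\tilde H$ (order $(\ell-1)\wedge\ell'$) and this derivative bound into the single-operator result produces order $((\ell-2)\wedge(\ell'-1)\wedge\ell'')/2$, as claimed. The product property $H_1H_2\in\mathbb{M}_{i,n}(\bar X,\bar Y,(\ell_1+\ell_2)/2)$ is immediate from the definition: multiply the two defining inequalities, absorbing the extra $\bar q_{c'}$ factor into a slightly larger constant via a Gaussian convolution-type bound $\bar q_{c_1}\bar q_{c_2}\le C\bar q_{c_3}$ valid since $\bar q_c(\delta,x_i,y_i,\cdot)$ is a fixed centered Gaussian; the exponents add. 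Finally, $(\zeta_{i+1}-\zeta_i)\mathcal{I}^{(\alpha_1)}_i(H_1)\in\mathbb{M}_{i,n}(\bar X,\bar Y,(\ell_1+1)/2)$ combines the single-operator result (order $(\ell_1-1)/2$ at worst, taking $\ell'=\ell_1$ with the trivial bound $\mathcal{D}^{(\alpha_1)}_iH_1\in\mathbb{M}_{i,n}(\cdot,\ell_1/2)$ — actually one should take $\ell'$ to be whatever order $\mathcal{D}^{(\alpha_1)}_iH_1$ genuinely has, but the statement as used only needs the weaker $(\ell_1-1)/2$) with the observation that multiplication by the deterministic factor $\delta=s_{i+1}-s_i$ raises the time-degeneracy exponent by exactly $2\cdot\tfrac12=1$ after the $p$-th power: $\delta^p\cdot\delta^{p(\ell_1-1)/2}=\delta^{p(\ell_1+1)/2}$. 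The main obstacle is purely bookkeeping: one must check that every coefficient multiplying an integral operator in the chain rules of Lemma \ref{lem:chain:rule:formula} is bounded on $[0,T]$ uniformly in the space variables — which rests on \A{AR}, \A{ND} and Gronwall estimates for the flow $m$ and its derivatives — so that no hidden time-singularity is introduced; once that is granted, the exponent arithmetic is as indicated and the constants $C(T)$ remain non-decreasing in $T$ while the $c'$ stay $T$-independent.
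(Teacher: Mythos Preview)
The paper explicitly omits the proof of this lemma, presenting it as a ``straightforward generalization'' of the worked example for $\mathcal I^{(1)}_{i+1}(1)$ and $\mathcal I^{(1,1)}_{i+1}(1)$. Your overall strategy --- decompose $\mathcal I^{(\alpha_1)}(H)=H\,\mathcal I^{(\alpha_1)}(1)-\mathcal D^{(\alpha_1)}H$, use $\mathcal I^{(\alpha_1)}(1)\in\mathbb M(\cdot,-1/2)$, and combine via a product rule --- is exactly the intended route. However, three points in your write-up are not correct as stated.

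\textbf{Product rule.} Literally multiplying the two defining inequalities produces $|H_1H_2|^p\,\bar q_c^2$ on the left and $\bar q_{c_1'}\bar q_{c_2'}$ on the right; your proposed bound $\bar q_{c_1}\bar q_{c_2}\le C\bar q_{c_3}$ fails because the left side carries an extra $1/\delta$ prefactor. The clean argument is to \emph{chain}: given $c>0$, apply the $H_1$ estimate to pass from $\bar q_c$ to $\bar q_{c_1'}$, then apply the $H_2$ estimate with starting constant $c_1'$ (the definition allows any starting $c$) to pass to $\bar q_{c_2'}$.

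\textbf{Double operator.} You invoke the chain rules \eqref{chain:rule:formula:first:order:integral:operator:1:second:coordinate:deriv}--\eqref{chain:rule:formula:first:order:integral:operator:2:second:coordinate:deriv}, but those are identities for the \emph{flow derivative} $\partial_{\bar Y_i}$ at the starting point, not for $\mathcal D^{(\alpha_2)}_{i+1}=\partial_{\bar X_{i+1}}$ or $\partial_{\bar Y_{i+1}}$ (see \eqref{derivative:operator:1}--\eqref{derivative:operator:2}). What is needed is the ordinary Leibniz rule on $H\,\mathcal I^{(\alpha_1)}(1)-\mathcal D^{(\alpha_1)}H$, together with the direct computation (as in the example preceding the lemma) that $\mathcal D^{(\alpha_2)}\mathcal I^{(\alpha_1)}(1)\in\mathbb M(\cdot,-1)$; the three resulting pieces then have orders $(\ell'-1)/2$, $(\ell-2)/2$ and $\ell''/2$, and feeding this back into the single-operator result gives $((\ell-2)\wedge(\ell'-1)\wedge\ell'')/2$.

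\textbf{Last assertion.} Your ``trivial bound'' $\mathcal D^{(\alpha_1)}H_1\in\mathbb M(\cdot,\ell_1/2)$ does not follow from $H_1\in\mathbb M(\cdot,\ell_1/2)$: membership in $\mathbb M$ controls the function, not its derivatives. You are right that the lemma as written is imprecise on this point; in every application in the paper a separate bound on $\mathcal D^{(\alpha_1)}H_1$ is available, so the claim should be read under the hypotheses of the first assertion. Your parenthetical acknowledges this, but the proposed patch is not a valid argument.
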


Finally, we importantly emphasize that if $H \in \mathbb{M}_{i, n}(\bar{X}, \bar{Y}, \ell/2)$ for some $n\in \mathbb{N}$, $i\in \left\{0, \cdots, n\right\}$ and $\ell \in \mathbb{Z}$, then, its conditional $L^{p}(\mathbb{P})$-moment is finite and also satisfies a time regularity estimate. More precisely, for any $p\geq1$, it holds
\begin{equation}
\label{conditional:lp:moment:space:M:I:N}
\|H\|_{p, i, n} \leq C (\zeta_{i+1}-\zeta_{i})^{\ell/2}
\end{equation}
\noindent for some positive constant $C:=C(T)$, $T\mapsto C(T)$ being non-decreasing.
Indeed, using the fact that the sequence $Z$ is independent of $N$ as well as the upper-estimate \eqref{upper:bound:transition:density:approximation:semigroup} of Lemma \ref{definition:density:proxy} and finally \eqref{time:regularity:estimate:space:inequality}, one directly gets
\begin{align*}
\|H\|^{p}_{p, i, n} & = \E_{i,n}\Big[|H(\bar{X}_{i}, \bar{Y}_{i}, \bar{X}_{i+1}, \bar{Y}_{i+1}, \zeta^{n+1})|^p \Big| \bar{X}_{i}, \bar{Y}_{i}, \tau^{n+1}, N_T=n \Big] \\
& = \int_{\mathbb{R}^2} |H(\bar{X}_{i}, \bar{Y}_{i}, x_{i+1}, y_{i+1}, \zeta^{n+1})|^p \bar{p}(\zeta_{i+1}-\zeta_{i}, \bar{X}_{i}, \bar{Y}_{i}, x_{i+1}, y_{i+1}) \, dx_{i+1} dy_{i+1} \\
& \leq C \int_{\mathbb{R}^2} |H(\bar{X}_{i}, \bar{Y}_{i}, x_{i+1}, y_{i+1}, \zeta^{n+1})|^p \bar{q}_{4\kappa}(\zeta_{i+1}-\zeta_{i}, \bar{X}_{i}, \bar{Y}_{i}, x_{i+1}, y_{i+1}) \, dx_{i+1} dy_{i+1} \\
& \leq C (\zeta_{i+1}-\zeta_{i})^{p\ell/2}
\end{align*}
\noindent so that \eqref{conditional:lp:moment:space:M:I:N} directly follows. The previous conditional $L^{p}(\mathbb{P})$-moment estimate will be used at several places in the sequel. 

\section{Probabilistic representation for the couple $(S_T, Y_T)$.}\label{sec:probabilistic:representation}
In this section, we establish a probabilistic representation for the marginal law $(S_T, Y_T)$, or equivalently, for the law of $(X_T, Y_T)$ which is based on the Markov chain $(\bar{X}, \bar{Y})$ introduced in the previous section. For $\gamma>0$, we denote by $\mathcal{B}_\gamma(\mathbb{R}^2)$ the set of Borel measurable map $h: \mathbb{R}^2 \rightarrow \mathbb{R}$ satisfying the following exponential growth assumption at infinity, namely, for some positive constant $C$, for any $ (x, y)\in \mathbb{R}^2$, 
\begin{equation}
\label{growth:assumption:test:function}
|h(x, y)| \leq  C \exp(\gamma (|x|^2+|y|^2)).
\end{equation}

\begin{theorem}\label{theorem:probabilistic:formulation} Let $T>0$.  Under assumptions \A{AR} and \A{ND}, the law of the couple $(X_T, Y_T)$ given by the unique solution to the SDE \eqref{new:sde:to:approximate} at time $T$ starting from $(x_0=\ln(s_0), y_0)$ at time $0$ satisfies the following probabilistic representation: there exists a positive constant $c:=c(T, b_Y, \kappa)$ such that for any  $0\leq \gamma< c^{-1}$ and any $h\in\mathcal{B}_\gamma(\mathbb{R}^2)$, it holds
\begin{equation}
\mathbb{E}[h(X_T, Y_T)]= \mathbb{E}\Big[h(\bar{X}_{N_T+1}, \bar{Y}_{N_T+1}) \prod_{i=1}^{N_T+1} \theta_{i}\Big] \label{probabilistic:formulation:theorem}
\end{equation}
\noindent where the random variables $\theta_i \in \mathbb{S}_{i-1, n}(\bar{X}, \bar{Y})$ are defined by
\begin{align}
\theta_{i} & =  (f(\zeta_{i}-\zeta_{i-1}))^{-1}\Big[ \mathcal{I}^{(1,1)}_{i}(c^{i}_S) -  \mathcal{I}^{(1)}_{i}(c^{i}_S)  + \mathcal{I}^{(2,2)}_{i}(c^{i}_Y) +  \mathcal{I}^{(2)}_{i}(b^{i}_Y) +\mathcal{I}^{(1,2)}_{i}(c^{i}_{Y, S}) \Big], \, 1 \leq i \leq N_T, \label{definition:first:theta:theorem:probabilistic:representation} \\
\theta_{N_{T}+1} & = (1-F(T-\zeta_{N_{T}}))^{-1}, \label{definition:theta:theorem:probabilistic:representation}
\end{align}

\noindent with 
\begin{align*}
c^{i}_S & := \frac12 \Big(a_S(\bar{Y}_{i}) - a_S(m_{i-1})\Big),  \\
c^{i}_Y & := \frac12 \Big(a_Y(\bar{Y}_{i}) - a_Y(m_{i-1})\Big),  \\
b^{i}_Y & := b_Y(\bar{Y}_{i}) - b_Y(m_{i-1}), \\
c^{i}_{Y, S} & := \rho  ((\sigma_S\sigma_Y)(\bar{Y}_{i}) - (\sigma_S\sigma_Y)(m_{i-1})).
\end{align*}

Assume furthermore that $N$ is a renewal process with $Beta(\alpha, 1)$ jump times. For any $p\geq1$ satisfying $p(\frac12-\alpha) \leq 1-\alpha$, for any $\gamma$ such that $0\leq p \gamma < c^{-1}$ and any $h\in \mathcal{B}_\gamma(\mathbb{R}^2)$, the random variable appearing inside the expectation in the right-hand side of \eqref{probabilistic:formulation:theorem} admits a finite $L^{p}(\mathbb{P})$-moment. In particular, if $\alpha=1/2$ then for any $p\geq1$, for any $h\in \mathcal{B}_\gamma(\mathbb{R}^2)$ with $0\leq p \gamma < c^{-1}$, the $L^{p}(\mathbb{P})$-moment is finite.
 
\end{theorem}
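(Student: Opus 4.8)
The plan is to establish \eqref{probabilistic:formulation:theorem} by a parametrix-type (perturbation) expansion of the Markov semigroup $P_T$ around the approximation semigroup $\bar{P}_{s,t}$ introduced in Section \ref{subsection:choice:approximation:process}, and then to recognize the resulting infinite series as the expectation over the renewal process $N$. Concretely, I would write the generator of $(X,Y)$ as $L = \bar{L}_t + (L - \bar{L}_t)$, where $\bar{L}_t$ is the time-dependent generator of the frozen dynamics \eqref{proxy:sde} (frozen along the flow $m_\cdot$), and iterate Duhamel's formula
\[
P_{s,T} h = \bar{P}_{s,T} h + \int_s^T \bar{P}_{s,u}\,(L_u - \bar{L}_u)\,P_{u,T} h \, du .
\]
Iterating this identity produces $P_T h = \sum_{n\geq 0} \int_{\Delta_n(T)} \bar{P}_{0,s_1}(L-\bar{L})\bar{P}_{s_1,s_2}\cdots (L-\bar{L})\bar{P}_{s_n,T} h \, d\mathbf{s}_n$. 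The difference operator $(L-\bar{L})$ acting at time $s$ on a point $y$ contributes exactly the terms $c_S\partial_x^2 + c_Y \partial_y^2 + c_{Y,S}\partial_{xy}^2 + b_Y\partial_y - c_S\partial_x$ (recall $X=\ln S$ so the first-order $x$-term is $-\tfrac12 a_S$), with the coefficients $c_S^i, c_Y^i, b_Y^i, c_{Y,S}^i$ of the statement measuring the mismatch between the true coefficient at $\bar{Y}_i$ and the frozen one along $m_{i-1}$.

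The second step is to transfer the differential operators $(L-\bar{L})$ acting on $\bar{P}$ onto the Gaussian kernel $\bar{p}$ of Lemma \ref{definition:density:proxy} via the duality/integration-by-parts formula \eqref{duality:formula}: each $\partial_x, \partial_y$ hitting $\bar{P}_{s_i,s_{i+1}}$ is replaced, after integrating by parts against the Gaussian density, by the integral operators $\mathcal{I}^{(1)}, \mathcal{I}^{(2)}$ and their iterates, producing precisely the combination $\mathcal{I}^{(1,1)}(c_S^i) - \mathcal{I}^{(1)}(c_S^i) + \mathcal{I}^{(2,2)}(c_Y^i) + \mathcal{I}^{(2)}(b_Y^i) + \mathcal{I}^{(1,2)}(c_{Y,S}^i)$ appearing in $\theta_i$ (the weights $\mathcal{I}$ acting on the \emph{previous} interval, which is why $\theta_i \in \mathbb{S}_{i-1,n}$). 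Then one inserts the density $f$ of the renewal increments by multiplying and dividing: $d\mathbf{s}_n = \prod_{j} f(s_j - s_{j-1})^{-1} \prod_j f(s_j-s_{j-1})\, d\mathbf{s}_n$, and the last factor $(1-F(T-s_n))^{-1}$ accounts for the survival probability $\{N_T = n\}$, using exactly identity \eqref{probabilistic:representation:time:integrals}. This rewrites the $n$-th term of the series as $\mathbb{E}[\mathbf{1}_{\{N_T=n\}} h(\bar{X}_{n+1},\bar{Y}_{n+1})\prod_{i=1}^{n+1}\theta_i]$ and summing over $n$ gives \eqref{probabilistic:formulation:theorem}. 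The detailed proof of all this is what is postponed to the appendix; here I would only have to verify the convergence of the series and the validity of the interchanges, which rests on the Gaussian upper bound \eqref{upper:bound:transition:density:approximation:semigroup}, the time-degeneracy estimates of Lemma \ref{lem:time:degeneracy:property} (each $\theta_i$ carries a factor $(\zeta_i - \zeta_{i-1})^{-1}$ at worst, compensated by a Lipschitz gain $(\zeta_i-\zeta_{i-1})$ coming from the increments of the coefficients along the flow — whence the $\mathbb{M}_{i-1,n}$ membership with nonnegative index after multiplying by $(\zeta_i-\zeta_{i-1})$), and the growth condition \eqref{growth:assumption:test:function} together with the Gaussian tails of $(\bar{X}_{n+1},\bar{Y}_{n+1})$ which requires $\gamma < c^{-1}$ for some $c$ depending on $T, b_Y,\kappa$ (the constant $c$ absorbs the linear growth of the flow $m_t$ in $y_0$, controlled by the bounded derivative of $b_Y$ via Gronwall). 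Here the crucial point, and the genuinely new difficulty compared with \cite{BK,APKA,frikha2019}, is that the frozen drift $b_Y(m_s)$ is itself unbounded, so one cannot simply drop it; it is essential that the flow $m_\cdot$ is chosen as the exact deterministic characteristic, which makes $b^i_Y = b_Y(\bar{Y}_i) - b_Y(m_{i-1})$ a genuine increment that is Lipschitz-controlled by $|\bar{Y}_i - m_{i-1}|$ and hence enjoys the time-regularity needed for $\theta_i$ to lie in an $\mathbb{M}$-space.

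For the $L^p$-integrability claim, the plan is to bound $\big\|h(\bar{X}_{N_T+1},\bar{Y}_{N_T+1})\prod_{i=1}^{N_T+1}\theta_i\big\|_{L^p}$ by first conditioning on $N_T = n$ and on the jump times, applying the conditional moment estimate \eqref{conditional:lp:moment:space:M:I:N} successively over the intervals using the Markov/tower property, and using H\"older together with \eqref{growth:assumption:test:function} and the Gaussian moment bound on the last step (needing $p\gamma < c^{-1}$). This reduces the problem to a deterministic estimate: up to constants, the $n$-th term's $L^p$-norm is controlled by
\[
\int_{\Delta_n(T)} \prod_{i=1}^n f(s_i-s_{i-1})^{-1}(s_i-s_{i-1})^{-1/2}\cdot (1-F(T-s_n))^{-1} \prod_{i=0}^{n} f(s_{i+1}-s_i)\, d\mathbf{s}_{n+1},
\]
where the $(s_i-s_{i-1})^{-1/2}$ comes from the worst time-degeneracy index $-1/2$ of the weight $\mathcal{I}^{(1)}(1)$ (after the Lipschitz gain, the operators in $\theta_i$ carry index $-1/2$). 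With the $Beta(\alpha,1)$ choice $f(t) = (1-\alpha)\bar{\tau}^{-(1-\alpha)} t^{-\alpha}\mathbf{1}_{[0,\bar\tau]}(t)$ one has $f(t)^{-1}t^{-1/2} = c\, t^{\alpha - 1/2}$, and the condition $p(\tfrac12 - \alpha) \le 1-\alpha$ is exactly what guarantees that, after raising to the $p$-th power inside the $L^p$ computation (each term of the product gets exponent controlled by $p$ via H\"older with conjugate exponents summing appropriately over the $n$ factors), the singularity $t^{p(\alpha-1/2)}$ remains integrable against $f(t)\sim t^{-\alpha}$ near $0$, i.e. $p(\alpha - \tfrac12) - \alpha > -1$. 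Summing the resulting bound over $n$ gives a convergent series — in fact a Mittag-Leffler-type series $\sum_n C^n /\Gamma(\cdot)$ because each time-integration over a simplex of an integrable power produces a Beta-function/Gamma factor — which yields the finite $L^p$-moment. The borderline case $\alpha = 1/2$ makes $f(t)^{-1}t^{-1/2}$ bounded, so the constraint on $p$ disappears and the argument goes through for every $p\geq 1$. The main obstacle throughout is the bookkeeping of the time-degeneracy indices through the iterated integral/derivative operators and the chain rules of Lemma \ref{lem:chain:rule:formula}, ensuring that the $-1/2$ loss per interval (and not worse) is the correct accounting once the Lipschitz increments of the coefficients are used; this is precisely where the choice of freezing along the ODE flow $m_\cdot$ pays off.
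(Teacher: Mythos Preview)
Your proposal is correct and follows essentially the same approach as the paper: a Duhamel/parametrix expansion of $P_T$ around $\bar{P}$ (the paper derives the first step via It\^o's formula on the approximation process, which is equivalent), followed by the duality formula \eqref{duality:formula} to produce the $\mathcal{I}$-weights, the renewal-process reinterpretation via \eqref{probabilistic:representation:time:integrals}, and convergence/moment control through the $\mathbb{M}_{i,n}$ time-degeneracy machinery. One small slip: the Lipschitz gain from $|\bar{Y}_i - m_{i-1}|$ is of order $(\zeta_i - \zeta_{i-1})^{1/2}$, not $(\zeta_i - \zeta_{i-1})$, so the net index of $f(\zeta_i - \zeta_{i-1})\theta_i$ is $-1/2$ (as you correctly use later), not $0$.
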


The proof of Theorem \ref{theorem:probabilistic:formulation} is postponed to Appendix \ref{proof:probabilistic:representation:marginal:law}.

\section{Integration by parts formulae}\label{section:ibp:formulae}
In this section, we establish two IBP formulae for the law of the couple $(S_T, Y_T)$. More precisely, we are interested in providing a Bismut-Elworthy-Li formula for the two quantities 
$$
\partial_{s_0} \mathbb{E}[h(S_T, Y_T)], \quad \partial_{y_0} \mathbb{E}[h(S_T, Y_T)].
$$

Our strategy is divided into two steps as follows:

\smallskip

\emph{Step 1:} The first step was performed with the probabilistic representation established in Theorem \ref{theorem:probabilistic:formulation} for the couple $(X_T, Y_T)$ involving the two-dimensional Markov chain $(\bar{X}, \bar{Y})$ evolving on a time grid governed by the jump times of the renewal process $N$. Introducing $h(x, y)= f(e^{x}, y)$ and assuming that $f$ is of polynomial growth at infinity, it is sufficient to consider the two quantities
$$
\partial_{s_0} \mathbb{E}\Big[h(\bar{X}_{N_T+1}, \bar{Y}_{N_T+1}) \prod_{i=1}^{N_T+1} \theta_{i}\Big], \quad \partial_{y_0}\mathbb{E}\Big[h(\bar{X}_{N_T+1}, \bar{Y}_{N_T+1}) \prod_{i=1}^{N_T+1} \theta_{i}\Big]
$$
 
 \noindent for $h \in \mathcal{B}_\gamma(\mathbb{R}^2)$ for some $\gamma>0$ recalling that $x_0=\ln(s_0)$.
 
 \smallskip
 
 \emph{Step 2:} At this stage, one might be tempted to perform a standard IBP formula as presented in Nualart \cite{Nuabook} on the whole time interval $[0,T]$. However, such a strategy is likely to fail. The main reason is that the Skorokhod integral of the product of weights $\prod_{i=1}^{N_T} \theta_i$ will inevitably involve the Malliavin derivative of $\theta_i$ which will in turn raise some integrability issues of the resulting Malliavin weight. The key idea that we use in order to circumvent this issue consists in performing local IBP formulae on each of the random intervals $[\zeta_{i}, \zeta_{i+1}]$, $i=0, \cdots, N_T$, that is, by using the noise of the Markov chain on this specific time interval and then by combining all these local IBP formulae in a suitable way.

\smallskip

To implement successfully our strategy, two main ingredients are needed. Our first ingredient consists in transferring the partial derivatives $\partial_{s_0}$ and $\partial_{y_0}$ on the expectation forward in time from the first time interval $[0, \zeta_1]$ to the interval on which we perform the local IBP formula, say $[\zeta_i, \zeta_{i+1}]$. Our second ingredient consists in combining these various local IBP formulae in an adequate manner. Roughly speaking, we will consider a weighted sum of each IBP formula, the weight being precisely the length of the corresponding time interval.

\subsection{The transfer of derivative formula}

\begin{lem}\label{lem:transfer:derivative}
Let $h \in \mathcal{C}^1_p(\mathbb{R}^2)$ and $n \in \mathbb{N}$. The maps $\mathbb{R}^2 \ni (x, y) \mapsto \mathbb{E}_{i, n}\Big[h(\bar{X}_{i+1}, \bar{Y}_{i+1}) \theta_{i+1} | (\bar{X}_i, \bar{Y}_i)=(x, y)\Big]$, $i \in \left\{0, \cdots, n\right\}$, belong to $\mathcal{C}^1_p(\mathbb{R}^2)$ $a.s.$ Moreover, the following transfer of derivative formulae hold 
\begin{align}
\partial_{s_0} \mathbb{E}_{0, n}\Big[h(\bar{X}_{1}, \bar{Y}_{1}) \theta_{1} \Big]   = \mathbb{E}_{0, n}\Big[\partial_{\bar{X}_{1}}h(\bar{X}_{1}, \bar{Y}_{1}) \frac{\theta_{1}}{s_0}\Big]\label{transfer:derivative:x:first:interval}
\end{align}
\noindent while for $1\leq i \leq n$, 
\begin{align}
\partial_{\bar{X}_i} \mathbb{E}_{i, n}\Big[h(\bar{X}_{i+1}, \bar{Y}_{i+1}) \theta_{i+1} \Big]  & = \mathbb{E}_{i, n}\Big[\partial_{\bar{X}_{i+1}}h(\bar{X}_{i+1}, \bar{Y}_{i+1}) \theta_{i+1}\Big]. \label{transfer:derivative:x:next:interval}
\end{align}

Similarly, the following transfer of derivative formulae hold: for any $0\leq i\leq n-1$
\begin{align}
\partial_{\bar{Y}_i} \mathbb{E}_{i, n}\Big[h(\bar{X}_{i+1}, \bar{Y}_{i+1}) \theta_{i+1} \Big]  & = \mathbb{E}_{i, n}\Big[\partial_{\bar{Y}_{i+1}} h(\bar{X}_{i+1}, \bar{Y}_{i+1}) {\rtheta}^{e, Y}_{i+1}\Big] +  \mathbb{E}_{i, n}\Big[\partial_{\bar{X}_{i+1}} h(\bar{X}_{i+1}, \bar{Y}_{i+1}) {\rtheta}^{e, X}_{i+1}\Big] \nonumber \\
& \quad + \mathbb{E}_{i,n}\Big[h(\bar{X}_{i+1}, \bar{Y}_{i+1}) \rtheta^{c}_{i+1}\Big] \label{transfer:derivative:y:next:interval}
\end{align}
\noindent with 
\begin{align*}
\rtheta^{e, Y}_{i+1} & = (f(\zeta_{i+1}-\zeta_{i}))^{-1}\Big[ \mathcal{I}^{(1,1)}_{i+1}(d^{i+1}_S) + \mathcal{I}^{(2,2)}_{i+1}(d^{i+1}_Y) + \mathcal{I}^{(1)}_{i+1}(e^{Y, i+1}_S) + \mathcal{I}^{(2)}_{i+1}(e^{Y, i+1}_Y) + \mathcal{I}^{(1,2)}_{i+1}(d^{i+1}_{Y,S}) \Big] ,\\
\rtheta^{e, X}_{i+1} & = (f(\zeta_{i+1}-\zeta_{i}))^{-1} \mathcal{I}^{(1)}_{i+1}(e^{X, i+1}_S) ,\\
\rtheta^{c}_{i+1} & = \mathcal{I}^{(1)}_{i+1}\Big(\partial_{\bar{Y}_i} \bar{X}_{i+1} \theta_{i+1} - \rtheta^{e, X}_{i+1}\Big) + \partial_{\bar{Y}_i} \theta_{i+1} \\
& +  \mathcal{I}^{(2)}_{i+1}\Big(m'_i \theta_{i+1} - \rtheta^{e, Y}_{i+1} + \Big(\sigma'_{Y, i} \Big(\rho_i Z^{1}_{i+1}+ \sqrt{1-\rho_i^2}Z^{2}_{i+1}\Big) + \sigma_{Y,i} \frac{\rho'_i}{\sqrt{1-\rho^2_i}}\Big(\sqrt{1-\rho_i^2} Z^{1}_{i+1} - \rho_i Z^{2}_{i+1}\Big)\Big)\theta_{i+1}\Big),\\ 
d^{i+1}_S & = m'_i c^{i+1}_S,\\
d^{i+1}_Y & = m'_i c^{i+1}_Y, \\
d^{i+1}_{Y, S} & = m'_i c^{i+1}_{Y, S}, \\
e^{Y, i+1}_S & = -m'_i c^{i+1}_S + \partial_{\bar{Y}_i} c^{i+1}_{Y,S}, \\
e^{Y, i+1}_Y & = m'_i b^{i+1}_Y + \partial_{\bar{Y}_i} c^{i+1}_Y,\\
e^{X, i+1}_S & =\partial_{\bar{Y}_i} c^{i+1}_S .
\end{align*}

For $i=n$, one also has
\begin{align}
\partial_{\bar{Y}_n} \mathbb{E}_{n, n}\Big[h(\bar{X}_{n+1}, \bar{Y}_{n+1}) \theta_{n+1} \Big]  & = \mathbb{E}_{n, n}\Big[\partial_{\bar{Y}_{n+1}} h(\bar{X}_{n+1}, \bar{Y}_{n+1}) {\rtheta}^{e, Y}_{n+1}\Big] + \mathbb{E}_{n, n}\Big[\partial_{\bar{X}_{n+1}} h(\bar{X}_{n+1}, \bar{Y}_{n+1}) {\rtheta}^{e, X}_{n+1}\Big]\nonumber \\
& \quad + \mathbb{E}_{n,n}\Big[h(\bar{X}_{n+1}, \bar{Y}_{n+1}) \rtheta^{c}_{n+1}\Big] \label{transfer:derivative:y:last:interval}
\end{align}
\noindent with
\begin{align*}
{\rtheta}^{e, Y}_{n+1} & = (1-F(T-\zeta_n))^{-1} \Big(m'_{n} + \sigma'_{Y,n}  \Big(\rho_n Z^{1}_{n+1}+ \sqrt{1-\rho_n^2} Z^{2}_{n+1}\Big) + \sigma_{Y, n} \frac{\rho'_n}{\sqrt{1-\rho^2_n}} \Big(\sqrt{1-\rho^2_n} Z^{1}_{n+1} -\rho_n Z^{2}_{n+1} \Big)\Big),\\
{\rtheta}^{e, X}_{n+1} & = (1-F(T-\zeta_n))^{-1} \Big(-\frac12 a'_{S,n} + \sigma'_{S,n}Z^{1}_{n+1} \Big),
\end{align*}
\noindent and we set $ \rtheta^{c}_{n+1} = 0$ for notational convenience.

 Finally, the weight sequences $(\rtheta^{e, Y}_{i})_{1\leq i \leq n+1}$, $(\rtheta^{e, X}_{i})_{1\leq i \leq n+1}$ and $(\rtheta^{c}_{i})_{1\leq i \leq n+1}$ defined above satisfy 
 $$
 f(\zeta_{i}-\zeta_{i-1}) \rtheta^{e, Y}_{i}, \,  f(\zeta_{i}-\zeta_{i-1})\rtheta^{c}_i \in \mathbb{M}_{i-1, n}(\bar{X}, \bar{Y}, -1/2),  \quad f(\zeta_{i}-\zeta_{i-1})\rtheta^{e, X}_i  \in \mathbb{M}_{i-1, n}(\bar{X}, \bar{Y}, 0), \, \quad i \in \left\{1, \cdots, n\right\}
 $$
 \noindent and $(1-F(T-\zeta_n)) \rtheta^{e, Y}_{n+1}\in \mathbb{M}_{n, n}(\bar{X}, \bar{Y}, 0)$, $(1-F(T-\zeta_n)) \rtheta^{e, X}_{n+1}, \in \mathbb{M}_{n, n}(\bar{X}, \bar{Y}, 1/2)$.
\end{lem}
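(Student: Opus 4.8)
The plan is to establish the three transfer-of-derivative formulae by direct computation, exploiting the Gaussian structure of the one-step transition of the Markov chain $(\bar X, \bar Y)$ together with the duality formula \eqref{duality:formula} and the chain rules of Lemma \ref{lem:chain:rule:formula}, and then to verify the time-regularity membership claims using Lemma \ref{lem:time:degeneracy:property}. I would organize the argument interval by interval, starting with the first interval $[0,\zeta_1]$, where the only free parameter is $s_0=e^{x_0}$ entering through $\bar X_0 = x_0$; since $\partial_{s_0} = \frac{1}{s_0}\partial_{x_0}$ and $\partial_{x_0}\bar X_1 = 1$, $\partial_{x_0}\bar Y_1 = 0$, differentiating under the (finite-dimensional Gaussian) integral and using $\partial_{x_0} = \partial_{\bar X_1}$ acting inside $\mathbb E_{0,n}$ — together with the fact that $\theta_1$ depends on $x_0$ only through $\bar X_1$ — immediately gives \eqref{transfer:derivative:x:first:interval}. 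The intermediate formula \eqref{transfer:derivative:x:next:interval} is the same computation at a generic interior node, using $\partial_{\bar X_i}\bar X_{i+1}=1$, $\partial_{\bar X_i}\bar Y_{i+1}=0$ and $\partial_{\bar X_i}\theta_{i+1} = \mathcal I^{(1,1)}_{i+1}(\partial_{\bar X_i}c^{i+1}_S)+\cdots$ with all the coefficients $c^{i+1}_S,\dots$ independent of $\bar X_i$, so that $\partial_{\bar X_i}\theta_{i+1}=0$ and the derivative passes cleanly onto $h$.

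The substantive step is \eqref{transfer:derivative:y:next:interval}. Here $\partial_{\bar Y_i}$ hits three places: the flow derivatives $\partial_{\bar Y_i}\bar X_{i+1}$ and $\partial_{\bar Y_i}\bar Y_{i+1}$ given by \eqref{partial:Y:flow:deriv:X:bar:process} and \eqref{partial:flow:deriv:Y:bar:process}, and the weight $\theta_{i+1}$ itself (which now does depend on $\bar Y_i$ through $m_i=m_i(\bar Y_i)$, the frozen coefficients, and the variances). I would write
\[
\partial_{\bar Y_i}\mathbb E_{i,n}[h(\bar X_{i+1},\bar Y_{i+1})\theta_{i+1}] = \mathbb E_{i,n}\big[\partial_{\bar X_{i+1}}h\,(\partial_{\bar Y_i}\bar X_{i+1})\theta_{i+1} + \partial_{\bar Y_{i+1}}h\,(\partial_{\bar Y_i}\bar Y_{i+1})\theta_{i+1} + h\,\partial_{\bar Y_i}\theta_{i+1}\big],
\]
then split $\partial_{\bar Y_i}\bar Y_{i+1} = m'_i + (\text{Gaussian remainder})$ as in \eqref{partial:flow:deriv:Y:bar:process} and $\partial_{\bar Y_i}\bar X_{i+1} = -\tfrac12 a'_{S,i} + \sigma'_{S,i}Z^1_{i+1}$, and apply the duality formula \eqref{duality:formula} to the Gaussian-remainder pieces and to the $\partial_{\bar X_{i+1}}h$, $\partial_{\bar Y_{i+1}}h$ terms that must be turned back into a bare $h$ against an $\mathcal I$-weight. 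Expanding $\partial_{\bar Y_i}\theta_{i+1}$ via the chain rules \eqref{chain:rule:formula:first:order:integral:operator:1:second:coordinate:deriv}--\eqref{chain:rule:formula:second:order:integral:operator:cross:second:coordinate:deriv} produces further $\mathcal I^{(\alpha)}_{i+1}(\cdot)$ and $\mathcal I^{(\alpha_1,\alpha_2)}_{i+1}(\cdot)$ terms whose integrands are $\partial_{\bar Y_i}c^{i+1}_S$, $\partial_{\bar Y_i}c^{i+1}_Y$, $\partial_{\bar Y_i}b^{i+1}_Y$, $\partial_{\bar Y_i}c^{i+1}_{Y,S}$ plus lower-order corrections. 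Collecting the coefficient of $\partial_{\bar Y_{i+1}}h$ yields $\rtheta^{e,Y}_{i+1}$, the coefficient of $\partial_{\bar X_{i+1}}h$ yields $\rtheta^{e,X}_{i+1}$ (which is just the $\partial_{\bar Y_i}c^{i+1}_S$ contribution routed through $\mathcal I^{(1)}_{i+1}$, since $\partial_{\bar Y_i}\bar X_{i+1}$ involves $\bar X$-noise only), and the remaining $h$-terms — the residual of $m'_i\theta_{i+1}$ after removing $\rtheta^{e,Y}_{i+1}$, the Gaussian remainder in $\partial_{\bar Y_i}\bar Y_{i+1}$, the residual of $(\partial_{\bar Y_i}\bar X_{i+1})\theta_{i+1}$ after removing $\rtheta^{e,X}_{i+1}$, and $\partial_{\bar Y_i}\theta_{i+1}$ — assemble, after an application of duality to each, into $\rtheta^c_{i+1}$ exactly as displayed. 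The identities $\partial_{\bar X_{i+1}}\partial_{\bar Y_i}\bar X_{i+1} = \sigma'_{S,i}/\sigma_{S,i}$ and $\partial_{\bar X_{i+1}}\partial_{\bar Y_i}\bar Y_{i+1} = \frac{\rho'_i}{1-\rho_i^2}\frac{\sigma_{Y,i}}{\sigma_{S,i}}$ (already recorded in the proof of Lemma \ref{lem:chain:rule:formula}) are what make the duality applications consistent.

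The last-interval case \eqref{transfer:derivative:y:last:interval} is simpler because $\theta_{n+1}=(1-F(T-\zeta_n))^{-1}$ is a deterministic function of $\zeta_n$, so $\partial_{\bar Y_n}\theta_{n+1}=0$ and there is no $\mathcal I$-weight coming from the weight; only the flow derivatives $\partial_{\bar Y_n}\bar X_{n+1}$, $\partial_{\bar Y_n}\bar Y_{n+1}$ survive, giving $\rtheta^{e,X}_{n+1}$ and $\rtheta^{e,Y}_{n+1}$ directly (and $\rtheta^c_{n+1}=0$). Finally, for the membership statements I would argue as in the worked example preceding Lemma \ref{lem:time:degeneracy:property}: under \A{AR} and \A{ND}, each coefficient difference $c^{i+1}_S, c^{i+1}_Y, b^{i+1}_Y, c^{i+1}_{Y,S}$ is Lipschitz in its second slot with the Gaussian concentration $|\bar Y_{i+1}-m_i(\bar Y_i)|$, hence lies in $\mathbb M_{i,n}(\bar X,\bar Y,0)$ after multiplying by $f(\zeta_{i+1}-\zeta_i)$ — actually the key point is that $m'_i$ is bounded (it solves the variational ODE $\dot{} = b_Y'(m_\cdot)\,\cdot$ with $b_Y'$ bounded by \A{AR}), and $\partial_{\bar Y_i}c^{i+1}_\bullet$ are bounded as well; then Lemma \ref{lem:time:degeneracy:property} gives $\mathcal I^{(1)}_{i+1}(\cdot)\in\mathbb M(\cdot,-1/2)$, $\mathcal I^{(1,1)}_{i+1}(\cdot)\in\mathbb M(\cdot,-1)$, etc., and the weighted sums land in $\mathbb M_{i-1,n}(\bar X,\bar Y,-1/2)$ for $\rtheta^{e,Y}$ and $\rtheta^c$ and in $\mathbb M_{i-1,n}(\bar X,\bar Y,0)$ for $\rtheta^{e,X}$, as claimed, with the obvious shift for the terminal index $n+1$ where the renewal-survival factor replaces $f$. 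The main obstacle I anticipate is purely bookkeeping: correctly tracking which chain-rule correction terms from Lemma \ref{lem:chain:rule:formula} cancel against which Gaussian-remainder duality contributions so that the final weights are precisely the displayed $\rtheta^{e,Y}_{i+1},\rtheta^{e,X}_{i+1},\rtheta^{c}_{i+1}$ — there is no conceptual difficulty, but the algebra is lengthy and error-prone, which is presumably why the authors "omit some technical details" at the analogous points.
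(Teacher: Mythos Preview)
Your proposal is correct and follows essentially the same approach as the paper's proof: differentiate under the conditional expectation, use the flow derivatives \eqref{partial:flow:deriv:Y:bar:process}--\eqref{partial:Y:flow:deriv:X:bar:process} and the chain rules of Lemma \ref{lem:chain:rule:formula} to expand $\partial_{\bar Y_i}\theta_{i+1}$, apply the duality formula \eqref{duality:formula}, and verify that the residual collapses to $\rtheta^c_{i+1}$; the membership claims then follow from Lemma \ref{lem:time:degeneracy:property} exactly as you outline. One small clarification: the weights $\rtheta^{e,Y}_{i+1}$, $\rtheta^{e,X}_{i+1}$ are given in the statement rather than derived, so the paper's Step 2 verifies the identity by computing the difference between $\partial_{\bar Y_i}\mathbb E_{i,n}[h\,\theta_{i+1}]$ and $\mathbb E_{i,n}[\partial_{\bar X_{i+1}}h\,\rtheta^{e,X}_{i+1}] + \mathbb E_{i,n}[\partial_{\bar Y_{i+1}}h\,\rtheta^{e,Y}_{i+1}]$ and checking it equals $\mathbb E_{i,n}[h\,\rtheta^c_{i+1}]$ --- but this is just a reorganization of the same algebra you describe.
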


The proof of Lemma \ref{lem:transfer:derivative} is postponed to Appendix \ref{proof:lem:transfer:derivative}. The transfer of derivative procedure starts on the first time interval $[0,\zeta_{1}]$ according to formulae \eqref{transfer:derivative:x:first:interval} and \eqref{transfer:derivative:y:next:interval} (for $i=0$). It expresses the fact that the flow derivatives $\partial_{s_0}$ and $\partial_{y_0}$ of the conditional expectations on the left-hand side of the equations are transferred to derivative operators $\partial_{\bar{X}_1}$ and $\partial_{\bar{Y}_1}$ on the test function $h$ appearing on the right-hand side. Remark that the first derivatives of $h$ have been written ubiquitously as $\partial_{\bar{X}_{i+1}}h(\bar{X}_{i+1}, \bar{Y}_{i+1})$ and $\partial_{\bar{Y}_{i+1}} h(\bar{X}_{i+1}, \bar{Y}_{i+1})$.

Then, by the Markov property satisfied by the process $(\bar{X}, \bar{Y})$, the function $h$ appearing inside the (conditional) expectations on the right-hand side of \eqref{transfer:derivative:x:first:interval} and \eqref{transfer:derivative:y:next:interval} (for $i=0$) will be given by the conditional expectation appearing on the left-hand side of the same equations but for $i=1$. The transfer of derivative formulae for the following time intervals are obtained by induction using \eqref{transfer:derivative:x:next:interval} and \eqref{transfer:derivative:y:next:interval} up to the last time interval.  Doing so, we obtain various transfer of derivative formulae by transferring successively the derivative operators through all intervals forward in time.

\subsection{The integration by parts formulae}

We first define the weights that will be used in our IBP formulae. For an integer $n$, on the set $\left\{N_T=n\right\}$, for any $k \in \left\{ 1, \cdots, n+1\right\}$ and any $j \in \left\{ 1, \cdots,  k\right\}$, we define
\begin{align*}
\rtheta^{\mathcal{I}^{(1),n+1}_k} & :=  \prod_{i=k+1}^{n+1} \theta_i \times \mathcal{I}^{(1)}_{k}(\theta_k)  \times \prod_{i=1}^{k-1} \theta_i,\\
\rtheta^{C^{n+1}_j} & :=  \prod_{i=j+1}^{n+1} \theta_i \times \rtheta_{j}^c \times \prod_{i=1}^{{j}-1} \rtheta^{e, Y}_i,  \\
\rtheta^{\mathcal{I}^{(2),n+1}_k} &:= \prod_{i=k+1}^{n+1} \theta_{i} \times \mathcal{I}^{(2)}_{k}(\rtheta^{e, Y}_k) \times \prod_{i=1}^{k-1} \rtheta^{e, Y}_{i},\\
\rtheta^{\mathcal{I}^{(1), n+1}_k}_j &:=  \prod_{i=k+1}^{n+1} \theta_i \times \mathcal{I}^{(1)}_{k}(\theta_k) \times \prod_{i=j+1}^{k-1} \theta_i \times \rtheta_{j}^{e, X}  \times \prod_{i=1}^{j-1}   \rtheta^{e, Y}_i, \quad  j=1, \cdots, k-1, \\
\rtheta^{\mathcal{I}^{(1), n+1}_k}_k &:= \prod_{i=k+1}^{n+1} \theta_i \times   \mathcal{I}^{(1)}_{k}(\rtheta_{k}^{e, X})  \times \prod_{i=1}^{k-1}   \rtheta^{e, Y}_i.
\end{align*}

With the above definitions at hand, we are now able to state our IBP formulae.
\begin{theorem}\label{thm:ibp:formulae} Let $T>0$.  Under assumptions \A{AR} and \A{ND}, the law of the couple $(X_T, Y_T)$, given by the unique solution to the SDE \eqref{new:sde:to:approximate} at time $T$ starting from $(x_0=\ln(s_0), y_0)$ at time $0$, satisfies the following Bismut-Elworthy-Li IBP formulae: there exists some positive constant $c:=c(T, b_Y, \kappa)$ such that for any $0\leq \gamma< c^{-1}$ and any  $h \in \mathcal{B}_{\gamma}(\mathbb{R}^2)$, for any $(s_0, y_0) \in (0, \infty) \times \mathbb{R}$, it holds
\begin{align}\label{IBP:s0}
s_0 T \partial_{s_0} \mathbb{E}\Big[h(X_T, Y_T)\Big] = \mathbb{E}\Big[h(\bar{X}_{N_T+1}, \bar{Y}_{N_T+1}) \sum_{k=1}^{N_T+1} (\zeta_{k}-\zeta_{k-1}) \rtheta^{\mathcal{I}^{(1), N_T+1}_k}  \Big]
\end{align}
\noindent and
\begin{align}\label{IBP:y0}
T \partial_{y_0} \mathbb{E}\Big[h(X_T, Y_T)\Big] & = \E\left[h(\bar{X}_{N_T+1}, \bar{Y}_{N_T+1}) \sum_{k = 1}^{N_T+1} (\zeta_{k}-\zeta_{k-1}) \Big( \rtheta^{\mathcal{I}^{(2),N_T+1}_k} + \sum_{j=1}^{k} \rtheta^{C^{N_T+1}_j}+\rtheta^{\mathcal{I}^{(1), N_T+1}_k}_j \Big) \right].
\end{align}

Moreover, if $N$ is a renewal process with $Beta(\alpha, 1)$ jump times, then, for any $p\geq1$ satisfying $p(\frac12-\alpha) \leq 1-\alpha$, for any $\gamma$ such that $0\leq p \gamma < c^{-1}$ and any $h \in \mathcal{B}_\gamma(\mathbb{R}^2)$, the random variables appearing inside the expectation in the right-hand side of \eqref{IBP:s0} and \eqref{IBP:y0} admit a finite $L^{p}(\mathbb{P})$-moment. In particular, if $\alpha=1/2$ then for any $p\geq1$, for any $h\in \mathcal{B}_{\gamma}(\mathbb{R}^2)$ with $0\leq p \gamma < c^{-1}$, the $L^{p}(\mathbb{P})$-moment is finite.
\end{theorem}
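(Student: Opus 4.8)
The plan is to combine the probabilistic representation of Theorem~\ref{theorem:probabilistic:formulation} with the transfer of derivative formulae of Lemma~\ref{lem:transfer:derivative} and then iterate the local integration by parts on each random interval $[\zeta_{k-1},\zeta_k]$. Concretely, start from $\mathbb{E}[h(X_T,Y_T)]=\mathbb{E}[h(\bar{X}_{N_T+1},\bar{Y}_{N_T+1})\prod_{i=1}^{N_T+1}\theta_i]$ and differentiate with respect to $s_0$ (resp. $y_0$). Since only $\bar{X}_1$ (resp. $\bar{Y}_1$, and through $m_s$ the coefficients) depends on the initial condition on the first interval, conditioning on $\mathcal{G}_0,\tau^{n+1},N_T=n$ and using $\partial_{s_0}=\frac{1}{s_0}\partial_{x_0}$ reduces the problem to differentiating the first conditional expectation, to which \eqref{transfer:derivative:x:first:interval} (resp. \eqref{transfer:derivative:y:next:interval} with $i=0$) applies. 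Then, using the Markov property of $(\bar{X},\bar{Y})$ along the grid, the test function appearing on the next interval is again a conditional expectation of the same form, so one applies \eqref{transfer:derivative:x:next:interval} and \eqref{transfer:derivative:y:next:interval} inductively, pushing the derivative operators $\partial_{\bar X_{i+1}},\partial_{\bar Y_{i+1}}$ forward through all intervals up to $[\zeta_n,\zeta_{n+1}]$, where \eqref{transfer:derivative:x:next:interval} and \eqref{transfer:derivative:y:last:interval} terminate the recursion.

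Next I would turn each resulting derivative-on-the-test-function term into an honest Malliavin weight. On the terminal interval one invokes the duality formula \eqref{duality:formula}: $\mathbb{E}_{n,n}[\partial_{\bar X_{n+1}} h\, H]=\mathbb{E}_{n,n}[h\,\mathcal{I}^{(1)}_{n+1}(H)]$, and analogously with $\mathcal{I}^{(2)}_{n+1}$. This removes the derivative on $h$ but introduces $\mathcal{I}^{(1)}_{k}$ (resp. $\mathcal{I}^{(2)}_{k}$) acting on the accumulated weight; because the weight is a product $\prod_{i}\theta_i$ (or mixed products involving $\rtheta^{e,X}_i,\rtheta^{e,Y}_i,\rtheta^c_i$), the chain rule of Lemma~\ref{lem:chain:rule:formula} together with the fact that $\theta_i,\rtheta^{e,\cdot}_i,\rtheta^c_i\in\mathbb{S}_{i-1,n}(\bar X,\bar Y)$ — hence depend only on the data of their own interval — lets the integral operator act factor by factor. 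Collecting the combinatorics of where the derivative sits when one crosses each interval yields precisely the weights $\rtheta^{\mathcal{I}^{(1),n+1}_k}$, $\rtheta^{\mathcal{I}^{(2),n+1}_k}$, $\rtheta^{C^{n+1}_j}$, $\rtheta^{\mathcal{I}^{(1),n+1}_k}_j$ defined before the theorem. The weighting by $(\zeta_k-\zeta_{k-1})$ and the overall factors $s_0 T$ and $T$ come from the following observation: on the first interval the transfer produces a factor $1/s_0$ (seen in \eqref{transfer:derivative:x:first:interval}), and one is free to choose on which interval to perform the IBP; averaging the $N_T+1$ resulting representations with weights $(\zeta_k-\zeta_{k-1})$, which sum to $T$, gives a single formula with the stated normalization and, crucially, improves integrability since each Malliavin weight $\mathcal{I}^{(\cdot)}_k$ carries a time-singularity $(\zeta_k-\zeta_{k-1})^{-1/2}$ that is tamed by the multiplicative $(\zeta_k-\zeta_{k-1})$.

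For the $L^p$ integrability claim I would argue exactly as in the proof of the analogous statement in Theorem~\ref{theorem:probabilistic:formulation}. Conditioning successively on $\mathcal{G}_i,\tau^{n+1},N_T=n$ and using the time-regularity bookkeeping of Lemma~\ref{lem:time:degeneracy:property}, one checks that each building block has a controlled index: $f(\zeta_i-\zeta_{i-1})\theta_i\in\mathbb{M}_{i-1,n}(\bar X,\bar Y,-1)$, while $f(\zeta_i-\zeta_{i-1})\rtheta^{e,Y}_i,\,f(\zeta_i-\zeta_{i-1})\rtheta^c_i\in\mathbb{M}_{i-1,n}(\bar X,\bar Y,-1/2)$ and $f(\zeta_i-\zeta_{i-1})\rtheta^{e,X}_i\in\mathbb{M}_{i-1,n}(\bar X,\bar Y,0)$ by the last assertion of Lemma~\ref{lem:transfer:derivative}; applying one more integral operator $\mathcal{I}^{(1)}_k$ or $\mathcal{I}^{(2)}_k$ costs an extra $-1/2$ in the index, and the multiplicative $(\zeta_k-\zeta_{k-1})$ gives back $+1$. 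Using \eqref{conditional:lp:moment:space:M:I:N} to convert $\mathbb{M}$-membership into conditional $L^p$-moment bounds of the form $C(\zeta_{i+1}-\zeta_i)^{\ell/2}$, substituting $f(t)=(1-\alpha)\bar\tau^{\alpha-1}t^{-\alpha}$ for the $Beta(\alpha,1)$ law, and taking the expectation over the jump times via \eqref{probabilistic:representation:time:integrals}, one is reduced to checking the finiteness of an iterated time integral of a product of powers $(s_{i+1}-s_i)^{\beta_i}$; this converges provided each exponent exceeds $-1$, which is exactly the condition $p(\tfrac12-\alpha)\le 1-\alpha$ — the worst power being $(s_k-s_{k-1})^{p(1-1/2-\alpha)\wedge\cdots}$ after pairing the $-p/2$ from the integral operator with the $-p\alpha$ from the density and the $+p$ from $(\zeta_k-\zeta_{k-1})$. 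The case $\alpha=1/2$ makes the offending exponent vanish, giving finiteness for all $p\ge1$. Finally, the exponential growth $h\in\mathcal{B}_\gamma(\mathbb{R}^2)$ with $p\gamma<c^{-1}$ is absorbed by Gaussian-type upper bounds on $\bar p$ from \eqref{upper:bound:transition:density:approximation:semigroup}, exactly as in Theorem~\ref{theorem:probabilistic:formulation}.

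\textbf{Main obstacle.} I expect the principal difficulty to be the bookkeeping in Step~2: keeping precise track, as the derivative operator is transported across each interval, of which products of $\theta_i$ versus $\rtheta^{e,Y}_i$ (and where the single insertion of $\rtheta^{e,X}_j$ or $\rtheta^c_j$ lands) appears, so that the final sums match the exact definitions of $\rtheta^{\mathcal{I}^{(1),n+1}_k}_j$ and $\rtheta^{C^{n+1}_j}$. The chain-rule formulae of Lemma~\ref{lem:chain:rule:formula} generate extra lower-order terms (the $\sigma'_{S,i}/\sigma_{S,i}$, $\rho'_i/(1-\rho_i^2)$ corrections) each time an $\mathcal{I}^{(\cdot)}_k$ is commuted past a $\partial_{\bar Y_i}$, and one must verify that these are either reabsorbed into the listed weights or fall into the $\mathbb{M}$-spaces with an index good enough not to spoil the integrability count. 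The rest — the differentiation, the duality step, and the time-degeneracy estimates — is routine given the machinery of Section~\ref{section:preliminaries}.
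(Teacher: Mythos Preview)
Your plan is essentially the paper's proof and is correct in its architecture: differentiate the representation of Theorem~\ref{theorem:probabilistic:formulation}, push the derivative forward interval by interval via Lemma~\ref{lem:transfer:derivative}, perform a local IBP \eqref{duality:formula} on the $k$-th interval, then weight by $(\zeta_k-\zeta_{k-1})$ and sum over $k$. Two small corrections are worth flagging.

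First, a slip in the indices: by Lemma~\ref{estimate:theta:time:loca:density} one has $f(\zeta_i-\zeta_{i-1})\theta_i\in\mathbb{M}_{i-1,n}(\bar X,\bar Y,-1/2)$, not $-1$. This matters for the $L^p$ count but does not change the conclusion, since the worst singularity after the extra $\mathcal{I}^{(\cdot)}_k$ and the compensating factor $(\zeta_k-\zeta_{k-1})$ is still $(\zeta_i-\zeta_{i-1})^{-1/2}$ on each interval, exactly as in Theorem~\ref{theorem:probabilistic:formulation}.

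Second, your ``main obstacle'' is based on a slightly inaccurate picture of the mechanics. In the paper (and in the cleanest execution of your own outline) one does \emph{not} push the derivative all the way to the terminal interval and then commute $\mathcal{I}^{(\cdot)}_{n+1}$ backward through the product; instead, for each fixed $k$ one transfers the derivative only up to interval $k$, writes the remaining future as the conditional expectation $H_k(\bar X_k,\bar Y_k)=\mathbb{E}_{k,n}[h(\bar X_{n+1},\bar Y_{n+1})\prod_{i>k}\theta_i]$, and applies \eqref{duality:formula} on interval $k$ with the single weight $\rtheta^{e,Y}_k$ (or $\theta_k$, $\rtheta^{e,X}_k$). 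The factors $\theta_i$ with $i>k$ are hidden inside $H_k$ and never see the integral operator, while the factors with $i<k$ are $\mathcal{G}_{k-1}$-measurable and pull out of $\mathbb{E}_{k-1,n}[\cdot]$. Consequently the chain-rule corrections of Lemma~\ref{lem:chain:rule:formula} do \emph{not} appear in the proof of the theorem at all; they are used only inside the proof of Lemma~\ref{lem:transfer:derivative} to produce the explicit forms of $\rtheta^{e,Y}_i,\rtheta^{e,X}_i,\rtheta^c_i$. The bookkeeping you anticipate is therefore considerably lighter than you suggest: the weights $\rtheta^{\mathcal{I}^{(2),n+1}_k}$, $\rtheta^{C^{n+1}_j}$, $\rtheta^{\mathcal{I}^{(1),n+1}_k}_j$ drop out directly from iterating \eqref{transfer:derivative:y:next:interval} $k$ times and then applying a single $\mathcal{I}^{(1)}_k$ or $\mathcal{I}^{(2)}_k$, with no residual lower-order terms to reabsorb.
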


\begin{proof}
We only prove the IBP formula \eqref{IBP:y0}. The proof of \eqref{IBP:s0} follows by completely analogous (and actually more simple) arguments and is thus omitted. 

\bigskip

\noindent \emph{Step 1: proof of the IBP formula \eqref{IBP:y0} for $h \in \mathcal{C}^{1}_b(\mathbb{R}^2)$.}\\
Let $h\in \mathcal{C}^1_b(\mathbb{R}^2)$. From Theorem \ref{theorem:probabilistic:formulation} and Fubini's theorem, we write  
\begin{align}
	\E[h(X_{T}, Y_T)]  & = \sum_{n\geq0} \E\Big[\E\Big[ h(\bar{X}_{n+1}, \bar{Y}_{n+1})  \prod_{i=1}^{n+1}  \theta_{i} \vert \tau^{n+1} \Big] \,\I_{\{N_T = n\}}\Big] \label{decomposition:number:jump:law:XT:YT}
\end{align}
\noindent where we used the fact that $\left\{ N_T= n \right\} = \left\{ \tau_{n+1} >T\right\} \cap \left\{ \tau_n \leq T\right\}$.
 In most of the arguments below, we will work on the set $\{N_T=n\} $. In order to perform our induction argument forward in time through the Markov chain structure, we define for $ k\in \left\{0, \cdots, n+1\right\}$ the functions 
\begin{align*}
	  H_{k}(\bar X_k, \bar Y_k)
	:= \E_{k,n}\Big[h(\bar{X}_{n+1}, \bar{Y}_{n+1})\prod_{i=k+1}^{n+1}  \theta_{i}\Big] = \E\Big[h(\bar{X}_{n+1}, \bar{Y}_{n+1})\prod_{i=k+1}^{n+1}    \theta_{i}| \bar{X}_k, \bar{Y}_k, \tau^{n+1}, N_T=n\Big].
\end{align*}
\noindent We also let $ H_{n+1}(\bar X_{n+1}, \bar{Y}_{n+1}) := h(\bar{X}_{n+1}, \bar{Y}_{n+1})$. Note that we omit the dependence w.r.t the sequence $\tau^{n+1}$ in the definition of the (random) maps $(H_k)_{0\leq k \leq n+1}$. From the above definition and using \A{ND}, \A{AR}, it follows that the map $H_k$ belongs to $\mathcal{C}^{1}_p(\mathbb{R}^2)$ $a.s.$ for any $0\leq k \leq n+1$. Moreover, from the tower property of conditional expectation the following relation is satisfied for any $ k\in \left\{ 0, \cdots, n\right\}$
	\begin{align}
	 H_k(\bar X_k, \bar Y_k)= \mathbb{E}_{k,n}[  H_{k+1}(\bar{X}_{k+1}, \bar {Y}_{k+1})  \theta_{k+1} ].\label{recur2}
	\end{align}

Now, using first the Lebesgue differentiation theorem and then iterating the transfer of derivative formula \eqref{transfer:derivative:y:next:interval} in Lemma \ref{lem:transfer:derivative}, we obtain\footnote{As before, we use the convention $\sum_{\emptyset} \cdots = 0$, $\prod_{\emptyset} \cdots =1$.} for any $k\in \left\{1, \cdots, n\right\}$,  
	\begin{align}
	\label{transfer:derivative:k:times}
\partial_{y_0}\E\Big[  h(\bar{X}_{n+1}, \bar Y_{n+1}) \prod_{i=1}^{n+1}  \theta_{i} \Big|\, \tau^{n+1}\Big]  &= \partial_{y_0} \E\Big[ \E_{0,n}\Big[H_1(\bar{X}_1, \bar{Y}_1) \theta_1 \Big] \Big| \tau^{n+1}\Big] \nonumber\\
& = \E\Big[  \partial_{y_0}\E_{0,n}\Big[H_1(\bar{X}_1, \bar{Y}_1) \theta_1 \Big] \Big| \tau^{n+1}\Big] \nonumber\\
	&= \E\Big[\mathcal{D}^{(2)}_k H_{k}(\bar{X}_{k}, \bar Y_k) \prod_{i=1}^{k} \rtheta^{e, Y}_{i} \Big|\, \tau^{n+1}\Big] +\sum_{j=1}^{k}\E\Big[H_{j}(\bar{X}_{j}, \bar Y_j) \rtheta_{j}^c \prod_{i=1}^{{j}-1}   \rtheta^{e, Y}_i \Big|\, \tau^{n+1}\Big]  \nonumber \\
	&  \quad + \sum_{j=1}^{k}\E\Big[\mathcal{D}^{(1)}_{j}H_{j}(\bar{X}_{j}, \bar Y_j) \rtheta_{j}^{e, X} \prod_{i=1}^{{j}-1}   \rtheta^{e, Y}_i \Big|\, \tau^{n+1}\Big].
	\end{align}

To further simplify the first term appearing on the right-hand side of \eqref{transfer:derivative:k:times}, we use the tower property of conditional expectation (w.r.t $\E_{k-1,n}[.]$) and the integration by parts formula \eqref{duality:formula}. For any $k \in \left\{ 1, \cdots, n\right\}$, we obtain 
\begin{align*}
	\E\Big[\mathcal{D}^{(2)}_k  H_{k}(\bar{X}_{k}, \bar{Y}_k) \rtheta^{e, Y}_{k} \,\Big| \mathcal{G}_{{k-1}},\tau^{n+1}\Big] & = \E\Big[ H_{k}(\bar{X}_{k}, \bar{Y}_k)\mathcal{I}^{(2)}_{k}(\rtheta^{e, Y}_{k}) \,\Big| \mathcal{G}_{{k-1}},\tau^{n+1}\Big].
\end{align*}

We also simplify the third term appearing on the right-hand side of \eqref{transfer:derivative:k:times}, by using the transfer of derivatives formula \eqref{transfer:derivative:x:next:interval} up to the time interval $[\zeta_{k-1}, \zeta_k]$. For any $j \in \left\{1, \cdots, k\right\}$, it holds
$$
\E\Big[\mathcal{D}^{(1)}_{j}H_{j}(\bar{X}_{j}, \bar Y_j) \rtheta_{j}^{e, X} \prod_{i=1}^{{j}-1}   \rtheta^{e, Y}_i \Big|\, \tau^{n+1}\Big] = \E\Big[\mathcal{D}^{(1)}_{k}H_{k}(\bar{X}_{k}, \bar{Y}_k) \prod_{i=j+1}^{k} \theta_i \rtheta_{j}^{e, X} \prod_{i=1}^{j-1}   \rtheta^{e, Y}_i \Big|\, \tau^{n+1}\Big] ,
$$

\noindent so that, if $j \in \left\{1, \cdots, k-1\right\}$, taking conditional expectation (using again $\E_{k-1,n}[.]$) and then performing an IBP formula on the last time interval $[\zeta_{k-1}, \zeta_k]$ yield 
$$
\E\Big[\mathcal{D}^{(1)}_{k}H_{k}(\bar{X}_{k}, \bar{Y}_k) \prod_{i=j+1}^{k} \theta_i \rtheta_{j}^{e, X} \prod_{i=1}^{j-1}   \rtheta^{e, Y}_i \Big|\, \tau^{n+1}\Big] = \E\Big[ H_{k}(\bar{X}_{k}, \bar{Y}_k) \mathcal{I}^{(1)}_{k}(\theta_k) \prod_{i=j+1}^{k-1} \theta_i \rtheta_{j}^{e, X} \prod_{i=1}^{j-1}   \rtheta^{e, Y}_i \Big|\, \tau^{n+1}\Big],
$$
\noindent while if $j=k$, we obtain
$$
\E\Big[\mathcal{D}^{(1)}_{k}H_{k}(\bar{X}_{k}, \bar{Y}_k) \prod_{i=j+1}^{k} \theta_i \rtheta_{j}^{e, X} \prod_{i=1}^{j-1}   \rtheta^{e, Y}_i \Big|\, \tau^{n+1}\Big] = \E\Big[ H_{k}(\bar{X}_{k}, \bar{Y}_k) \mathcal{I}^{(1)}_{k}(   \rtheta_{k}^{e, X}  )\prod_{i=1}^{k-1}   \rtheta^{e, Y}_i \Big|\, \tau^{n+1}\Big].
$$

Coming back to \eqref{transfer:derivative:k:times} and using the definition of the maps $(H_k)_{0\leq k \leq n+1}$, we thus deduce
\begin{align}
\label{transfer:derivative:k:times:second:formula}
\partial_{y_0}\E\Big[  h(\bar{X}_{n+1}, \bar Y_{n+1}) \prod_{i=1}^{n+1}  \theta_{i} \Big|\, \tau^{n+1}\Big] 
	&= \E\Big[ H_{k}(\bar{X}_{k}, \bar Y_k) \mathcal{I}^{(2)}_{k}(\rtheta^{e, Y}_{k}) \times \prod_{i=1}^{k-1} \rtheta^{e, Y}_{i} \Big|\, \tau^{n+1}\Big] +\sum_{j=1}^{k}\E\Big[H_{j}(\bar{X}_{j}, \bar Y_j) \rtheta_{j}^c\times  \prod_{i=1}^{{j}-1}   \rtheta^{e, Y}_i \Big|\, \tau^{n+1}\Big]  \nonumber \\
	& \quad + \sum_{j=1}^{k - 1}\E\Big[ H_{k}(\bar{X}_{k}, \bar{Y}_k) \mathcal{I}^{(1)}_{k}(\theta_k) \times \prod_{i=j+1}^{k-1} \theta_i  \times \rtheta_{j}^{e, X} \prod_{i=1}^{j-1}   \rtheta^{e, Y}_i \Big|\, \tau^{n+1} \Big] \nonumber \\
	& \quad + \E\Big[ H_{k}(\bar{X}_{k}, \bar{Y}_k) \mathcal{I}^{(1)}_{k}(   \rtheta_{k}^{e, X}  )\prod_{i=1}^{k-1}   \rtheta^{e, Y}_i \Big|\, \tau^{n+1}\Big] \nonumber \\
	& = \E\Big[ h(\bar{X}_{n+1}, \bar{Y}_{n+1}) \prod_{i=k+1}^{n+1} \theta_i \times  \mathcal{I}^{(2)}_{k}(\rtheta^{e, Y}_{k}) \times \prod_{i=1}^{k-1} \rtheta^{e, Y}_{i} \Big|\, \tau^{n+1}\Big] \nonumber \\
	& \quad  +\sum_{j=1}^{k}\E\Big[h(\bar{X}_{n+1}, \bar{Y}_{n+1})\prod_{i=j+1}^{n+1} \theta_i \times  \rtheta_{j}^c \times \prod_{i=1}^{{j}-1}   \rtheta^{e, Y}_i \Big|\, \tau^{n+1}\Big]  \nonumber \\
	& \quad + \sum_{j=1}^{k-1}\E\Big[ h(\bar{X}_{n+1}, \bar{Y}_{n+1}) \prod_{i=k+1}^{n+1} \theta_i \times \mathcal{I}^{(1)}_{k}(\theta_k) \times\prod_{i=j+1}^{k-1} \theta_i \rtheta_{j}^{e, X} \prod_{i=1}^{j-1}   \rtheta^{e, Y}_i \Big|\, \tau^{n+1}\Big]  \nonumber \\
	& \quad + \E\Big[  h(\bar{X}_{n+1}, \bar{Y}_{n+1}) \prod_{i=k+1}^{n+1} \theta_i \times  \mathcal{I}^{(1)}_{k}(   \rtheta_{k}^{e, X}  )\prod_{i=1}^{k-1}   \rtheta^{e, Y}_i \Big|\, \tau^{n+1}\Big]. 
\end{align}

	In the case $k = n+1$, using the transfer of derivative formula \eqref{transfer:derivative:y:last:interval} of Lemma \ref{lem:transfer:derivative} on the last time interval and then performing the IBP formula \eqref{duality:formula}, we obtain the representation
\begin{align}
	 \partial_{y_0} \E\Big[h(\bar{X}_{n+1}, \bar{Y}_{n+1}) \prod_{i=1}^{n+1} \theta_{i} \Big|\, \tau^{n+1}\Big] &=  \E\Big[\mathcal{D}^{(2)}_{n+1}h(\bar{X}_{n+1}, \bar{Y}_{n+1})\prod_{i=1}^{n+1}  \rtheta^{e, Y}_i \Big|\, \tau^{n+1}\Big] + \sum_{j=1}^{n+1}\E\Big[ H_{j}(\bar{X}_{j}, \bar{Y}_j) \rtheta_{j}^c \prod_{i=1}^{{j}-1} \rtheta^{e, Y}_i \Big|\, \tau^{n+1}\Big] \nonumber\\
	& \qquad + \sum_{j=1}^{n+1}\E\Big[\mathcal{D}^{(1)}_{j}H_{j}(\bar{X}_{j}, \bar Y_j) \rtheta_{j}^{e, X} \prod_{i=1}^{{j}-1}   \rtheta^{e, Y}_i \Big|\, \tau^{n+1}\Big]\nonumber \\
	& =  \E\Big[h(\bar{X}_{n+1}, \bar{Y}_{n+1}) \mathcal{I}^{(2)}_{n+1}(\rtheta^{e, Y}_{n+1} ) \prod_{i=1}^{n}  \rtheta^{e, Y}_i \Big|\, \tau^{n+1}\Big] + \sum_{j=1}^{n+1}\E\Big[ H_{j}(\bar{X}_{j}, \bar{Y}_j) \rtheta_{j}^c \prod_{i=1}^{{j}-1} \rtheta^{e, Y}_i \Big|\, \tau^{n+1}\Big] \nonumber\\
	& \qquad + \sum_{j=1}^{n+1}\E\Big[\mathcal{D}^{(1)}_{n+1} h(\bar{X}_{n+1}, \bar Y_{n+1}) \prod_{i=j+1}^{n+1} \theta_{i} \rtheta_{j}^{e, X} \prod_{i=1}^{{j}-1}   \rtheta^{e, Y}_i \Big|\, \tau^{n+1}\Big] \nonumber\\
	& =  \E\Big[h(\bar{X}_{n+1}, \bar{Y}_{n+1}) \mathcal{I}^{(2)}_{n+1}(\rtheta^{e, Y}_{n+1} ) \times \prod_{i=1}^{n}  \rtheta^{e, Y}_i \Big|\, \tau^{n+1}\Big] \nonumber \\
	& \quad + \sum_{j=1}^{n+1}\E\Big[ h(\bar{X}_{n+1}, \bar{Y}_{n+1}) \prod_{i= j+1}^{n+1}\theta_i \times \rtheta_{j}^c \times \prod_{i=1}^{{j}-1} \rtheta^{e, Y}_i \Big|\, \tau^{n+1}\Big] \nonumber\\
	& \quad + \sum_{j=1}^{n}\E\Big[ h(\bar{X}_{n+1}, \bar Y_{n+1}) \mathcal{I}^{(1)}_{n+1}(\theta_{n+1}) \times \prod_{i=j+1}^{n} \theta_{i} \times \rtheta_{j}^{e, X} \times \prod_{i=1}^{{j}-1}   \rtheta^{e, Y}_i \Big|\, \tau^{n+1}\Big]  \nonumber \\
	& \quad + \E\Big[ h(\bar{X}_{n+1}, \bar Y_{n+1}) \mathcal{I}^{(1)}_{n+1}(\rtheta_{n+1}^{e, X}) \times \prod_{i=1}^{n}   \rtheta^{e, Y}_i \Big|\, \tau^{n+1}\Big] \label{ibp:vol:process:last:time:interval}
\end{align}	

\noindent where, for the last term appearing in the right-hand side of the above identities, we employed the transfer of derivative formula \eqref{transfer:derivative:x:next:interval} up to the last time interval and then performed an IBP formula. 
			
Now, the key point in order to establish the  IBP formula \eqref{IBP:y0} is to combine in a suitable way the identities \eqref{transfer:derivative:k:times:second:formula} and \eqref{ibp:vol:process:last:time:interval}. For each $k\in \left\{ 0, \cdots, n\right\}$, we multiply the above formulae by the length of the interval on which the local IBP formula is performed, namely we multiply by $\zeta_{k}- \zeta_{k-1}$ both sides of \eqref{transfer:derivative:k:times:second:formula}, $k=1, \cdots, n-1$, and we multiply by $T-\zeta_n$ both sides of \eqref{ibp:vol:process:last:time:interval}. We then sum them over all $k$. Recalling that $\sum_{k=1}^{n+1} \zeta_{k}-\zeta_{k-1}= T -\zeta_0 = T$, we deduce 
\begin{align*}
&  T\partial_{y_0}\E\Big[h(\bar{X}_{n+1}, \bar{Y}_{n+1}) \prod_{i=1}^{n+1} \theta_{i} \Big|\, \tau^{n+1}\Big]\\
	=&\sum_{k = 1}^{n+1} (\zeta_{k}-\zeta_{k-1})\E\Big[h(\bar{X}_{n+1}, \bar{Y}_{n+1}) \prod_{i=k+1}^{n+1} \theta_{i} \times \mathcal{I}^{(2)}_{k}(\rtheta^{e, Y}_k) \times \prod_{i=1}^{k-1} \rtheta^{e, Y}_{i} \Big|\, \tau^{n+1}\Big]\\
	 & + \sum_{k = 1}^{n+1}(\zeta_{k}-\zeta_{k-1})\sum_{j=1}^{k}\E\Big[h(\bar{X}_{n+1}, \bar{Y}_{n+1}) \prod_{i=j+1}^{n+1} \theta_i \times \rtheta_{j}^c \times \prod_{i=1}^{{j}-1} \rtheta^{e, Y}_i \Big|\, \tau^{n+1}\Big]\\
	 & + \sum_{k = 1}^{n+1}(\zeta_{k}-\zeta_{k-1}) \Big( \sum_{j=1}^{k-1}\E\Big[h(\bar{X}_{n+1}, \bar{Y}_{n+1}) \prod_{i=k+1}^{n+1} \theta_i \times \mathcal{I}^{(1)}_{k}(\theta_k) \times \prod_{i=j+1}^{k-1} \theta_i \times \rtheta_{j}^{e, X}  \times \prod_{i=1}^{j-1}   \rtheta^{e, Y}_i\Big|\, \tau^{n+1}\Big] \\
	 & +  \E\Big[ h(\bar{X}_{n+1}, \bar Y_{n+1}) \prod_{i=k+1}^{n+1} \theta_i \times \mathcal{I}^{(1)}_{k}(\rtheta_{k}^{e, X}) \times \prod_{i=1}^{n}   \rtheta^{e, Y}_i \Big|\, \tau^{n+1}\Big] \Big) \\
	 & =  \E\Big[h(\bar{X}_{n+1}, \bar{Y}_{n+1}) \sum_{k = 1}^{n+1} (\zeta_{k}-\zeta_{k-1}) \Big( \rtheta^{\mathcal{I}^{(2),n+1}_k} + \sum_{j=1}^{k} \rtheta^{C^{n+1}_j}+\rtheta^{\mathcal{I}^{(1), n+1}_k}_j\Big) \Big| \tau^{n+1} \Big].
\end{align*}

We now provide a sharp upper-estimate for the above quantity. From Lemma \ref{estimate:theta:time:loca:density} and Lemma \ref{lem:transfer:derivative}, it follows that $f(\zeta_{i}-\zeta_{i-1}) \theta_i, \,f(\zeta_{i}-\zeta_{i-1})  \rtheta^{e, Y}_{i}, \, f(\zeta_{i}-\zeta_{i-1}) \rtheta^{c}_i  \in \mathbb{M}_{i-1, n}(\bar{X}, \bar{Y}, -1/2)$ and $f(\zeta_{i}-\zeta_{i-1}) \rtheta^{e, X}_i \in  \mathbb{M}_{i-1, n}(\bar{X}, \bar{Y}, 0)$ for any $i\in \left\{1, \cdots, n\right\}$. Moreover, from the very definition of the weights $\theta_i$, $ \rtheta^{e, X}_i$ and $\rtheta^{e, Y}_i$, after some simple but cumbersome computations that we omit, one has $f(\zeta_{i}-\zeta_{i-1})  \mathcal{D}^{(1)}_i(\theta_i), \, f(\zeta_{i}-\zeta_{i-1}) \mathcal{D}^{(2)}_i(\rtheta^{e, Y}_i) \in \mathbb{M}_{i-1, n}(\bar{X}, \bar{Y}, -1)$ and $f(\zeta_{i}-\zeta_{i-1})  \mathcal{D}^{(1)}_i( \rtheta^{e, X}_i ) \in \mathbb{M}_{i-1, n}(\bar{X}, \bar{Y}, -1/2)$ so that from Lemma \ref{lem:time:degeneracy:property} we conclude $f(\zeta_{i}-\zeta_{i-1})  (\zeta_i -\zeta_{i-1}) \mathcal{I}^{(1)}_i(\theta_i) \in \mathbb{M}_{i-1, n}(\bar{X}, \bar{Y}, 0)$, $f(\zeta_{i}-\zeta_{i-1})  (\zeta_i -\zeta_{i-1}) \mathcal{I}^{(2)}_i(\rtheta^{e, Y}_i) \in \mathbb{M}_{i-1, n}(\bar{X}, \bar{Y}, 0)$ and $f(\zeta_{i}-\zeta_{i-1})  (\zeta_i -\zeta_{i-1}) \mathcal{I}^{(1)}_i(\rtheta^{e, X}_i) \in \mathbb{M}_{i-1, n}(\bar{X}, \bar{Y}, 1/2) $. Hence, from the boundedness of $h$, the tower property of conditional expectation and \eqref{conditional:lp:moment:space:M:I:N}, it holds
\begin{align*}
\Big| &  (\zeta_{k}-\zeta_{k-1}) \E\Big[h(\bar{X}_{n+1}, \bar{Y}_{n+1}) \prod_{i=k+1}^{n+1} \theta_{i} \times \mathcal{I}^{(2)}_{k}(\rtheta^{e, Y}_k) \times \prod_{i=1}^{k-1} \rtheta^{e, Y}_{i} \Big|\, \tau^{n+1}\Big] \Big| \\
& \leq C^{n+1} (1-F(T-\zeta_{n}))^{-1} \prod^{n}_{i=k+1} (f(\zeta_{i}-\zeta_{i-1}))^{-1} (\zeta_{i}-\zeta_{i-1})^{-\frac12} (f(\zeta_{k}-\zeta_{k-1}))^{-1} \prod_{i=1}^{k-1}  (f(\zeta_{i}-\zeta_{i-1}))^{-1} (\zeta_{i}-\zeta_{i-1})^{-\frac12}
\end{align*}
 \noindent so that using the identity \eqref{probabilistic:representation:time:integrals}
 \begin{align*}
 \sum_{n\geq0} & \mathbb{E}\Big[\sum_{k=1}^{n+1}\Big| (\zeta_{k}-\zeta_{k-1}) \E\Big[h(\bar{X}_{n+1}, \bar{Y}_{n+1}) \prod_{i=k+1}^{n+1} \theta_{i} \times \mathcal{I}^{(2)}_{k}(\rtheta^{e, Y}_k) \times \prod_{i=1}^{k-1} \rtheta^{e, Y}_{i} \Big|\, \tau^{n+1}\Big] \Big| \I_\seq{N_T=n} \Big] \\
 & \leq \sum_{n\geq0} C^{n+1} \sum_{k=1}^{n+1} \mathbb{E}\Big[ (1-F(T-\zeta_{n}))^{-1} (f(\zeta_{k}-\zeta_{k-1}))^{-1}\prod^{n}_{i =1, i\neq k} (f(\zeta_{i}-\zeta_{i-1}))^{-1} (\zeta_{i}-\zeta_{i-1})^{-\frac12} \I_\seq{N_T=n} \Big] \\
 & \leq \sum_{n\geq0} C^{n+1} \sum_{k=1}^{n+1} \int_{\Delta_{n}(T)}   \prod_{i=1, i\neq k}^{n} (s_{i}-s_{i-1})^{-1/2} \, d\s_{n}\\
 & \leq  \sum_{n\geq0} (n+1) C^{n+1} T^{(n+1)/2} \frac{\Gamma^{n}(1/2)}{\Gamma(1+n/2)} <\infty.
 \end{align*}
 
 From similar arguments that we omit, it follows
 \begin{align*}
 \Big| & (\zeta_{k}-\zeta_{k-1}) \sum_{j=1}^{k}\E\Big[h(\bar{X}_{n+1}, \bar{Y}_{n+1}) \Big(\rtheta^{C^{n+1}_j}+\rtheta^{\mathcal{I}^{(1), n+1}_k}_j\Big) \Big|\, \tau^{n+1}\Big] \Big| \\
 & \leq C^{n+1} (\zeta_{k}-\zeta_{k-1}) \sum_{j=1}^{k} (1-F(T-\zeta_n))^{-1} \prod_{i=1}^{n} (f(\zeta_i-\zeta_{i-1}))^{-1}(\zeta_{i}-\zeta_{i-1})^{-1/2} [ 1 + \I_\seq{i=k} (\zeta_{i}-\zeta_{i-1})^{-1/2}  ].
 \end{align*}
 
 \noindent so that using again the identity \eqref{probabilistic:representation:time:integrals}
 \begin{align*}
  \sum_{n\geq0} &  \mathbb{E}\Big[ \sum_{k=1}^{n+1} \Big|  (\zeta_{k}-\zeta_{k-1}) \sum_{j=1}^{k}\E\Big[h(\bar{X}_{n+1}, \bar{Y}_{n+1}) \Big(\rtheta^{C^{n+1}_j}+\rtheta^{\mathcal{I}^{(1), n+1}_k}_j\Big) \Big|\, \tau^{n+1}\Big] \Big| \I_\seq{N_T=n} \Big] \\
  & \leq \sum_{n\geq0 } C^{n+1}\sum_{k=1}^{n+1}  \E\big[ (\zeta_{k}-\zeta_{k-1}) \sum_{j=1}^{k} (1-F(T-\zeta_n))^{-1} \prod_{i=1}^{n} (f(\zeta_i-\zeta_{i-1}))^{-1}(\zeta_{i}-\zeta_{i-1})^{-1/2} [ 1 + \I_\seq{i=k} (\zeta_{i}-\zeta_{i-1})^{-1/2}  ] \I_\seq{N_T=n}\big]  \\
  & \leq \sum_{n\geq 0}  C^{n+1} (n+1)(n+2) T^{(n+1)/2}\frac{\Gamma^{n}(1/2)}{\Gamma(1+n/2)}< \infty.
 \end{align*}
 
%
 
The preceding estimates combined with \eqref{decomposition:number:jump:law:XT:YT} and the Lebesgue dominated convergence theorem allows to conclude that $y_0\mapsto \E[h(X_{T}, Y_T)]$ is continuously differentiable with
\begin{align*}
	T \partial_{y_0} \E[h(X_{T}, Y_T)]  & = T \partial_{y_0} \E\Big[  h(\bar{X}_{N_T+1}, \bar{Y}_{N_T+1}) \prod_{i=1}^{N_T+1} \theta_{i} \Big] \\
	& = \sum_{n\geq0} \E\Big[T \partial_{y_0}\E\Big[h(\bar{X}_{n+1}, \bar{Y}_{n+1})  \prod_{i=1}^{n+1} \theta_{i} \Big| \tau^{n+1} \Big] \,\I_{\{N_T = n\}} \Big]\\
	& =  \sum_{n\geq0} \E\Big[ \E\Big[h(\bar{X}_{n+1}, \bar{Y}_{n+1}) \sum_{k = 1}^{n+1} (\zeta_{k}-\zeta_{k-1}) \Big( \rtheta^{\mathcal{I}^{(2),n+1}_k} + \sum_{j=1}^{k} \rtheta^{C^{n+1}_j}+\rtheta^{\mathcal{I}^{(1), n+1}_k}_j\Big) \Big| \tau^{n+1} \Big] \,\I_{\{N_T = n\}} \Big]\\
	& = \E\Big[h(\bar{X}_{N_T+1}, \bar{Y}_{N_T+1}) \sum_{k = 1}^{N_T+1} (\zeta_{k}-\zeta_{k-1}) \Big( \rtheta^{\mathcal{I}^{(2),N_T+1}_k} + \sum_{j=1}^{k} \rtheta^{C^{n+1}_j}+\rtheta^{\mathcal{I}^{(1), N_T+1}_k}_j\Big) \Big]
\end{align*}

\noindent where we used Fubini's theorem for the last equality. This completes the proof of the IBP formula \eqref{IBP:y0} for $h \in \mathcal{C}^1_{b}(\mathbb{R}^2)$.  

\bigskip

\noindent \emph{Step 2: Extension to $h \in \mathcal{B}_\gamma(\mathbb{R}^2)$ for some positive $\gamma$.}\\

We now extend the two IBP formulae that we have established in the previous step to the case of a test function $h \in \mathcal{B}_{\gamma}(\mathbb{R}^2)$ for some sufficiently small $\gamma>0$. Let us note that under assumption \A{H}, from Kusuoka and Stroock \cite{kusuoka:stroock:mc2}, Corollary (3.25) and  the upper-estimate (3.27) therein, the process $(X_t, Y_t)_{t\geq0}$ admits a smooth transition density $(t, x_0, y_0, x ,y) \mapsto p(t, x_0, y_0, x, y) \in \mathcal{C}^{\infty}((0,\infty) \times \mathbb{R}^2 \times \mathbb{R}^2)$ and for any $h \in \mathcal{C}^1_{b}(\mathbb{R}^2)$, it holds
$$
\partial^{\alpha}_{s_0} \partial^{\beta}_{y_0} \mathbb{E}[h(X_T, Y_T)]  = \int_{\mathbb{R}^2} h(x, y) \, \partial^{\alpha}_{s_0} \partial^{\beta}_{y_0} p(T, x_0, y_0, x, y) \, dx dy
$$ 
\noindent for any $T>0$ and any integers $\alpha$ and $\beta$. 

We then proceed as in step 2 of the proof of Theorem \ref{theorem:probabilistic:formulation}. Namely, we prove that 
\begin{align}
T \partial_{y_0} &\mathbb{E}[h(X_T, Y_T)] \notag  \\
&= \E\Big[  h(\bar{X}_{N_T+1}, \bar{Y}_{N_T+1})  \sum_{k = 1}^{N_T+1} (\zeta_{k}-\zeta_{k-1}) \Big( \rtheta^{\mathcal{I}^{(2),N_T+1}_k} + \sum_{j=1}^{k} \rtheta^{C^{n+1}_j}+\rtheta^{\mathcal{I}^{(1), N_T+1}_k}_j\Big) \Big] \nonumber\\
& = \int_{\mathbb{R}^2} h(x, y) \, \E\Big[\bar{p}(T-\zeta_{N_T}, \bar{X}_{N_T}, \bar{Y}_{N_T}, x, y) \sum_{k = 1}^{N_T+1} (\zeta_{k}-\zeta_{k-1}) \Big( \rtheta^{\mathcal{I}^{(2),N_T+1}_k} + \sum_{j=1}^{k} \rtheta^{C^{n+1}_j}+\rtheta^{\mathcal{I}^{(1), N_T+1}_k}_j\Big) \Big] \, dx dy \label{gradient:heat:kernel:probabilistic:representation}
\end{align}

\noindent for any $h\in \mathcal{C}^1_b(\mathbb{R}^2)$.

Indeed, since $f(\zeta_{i}-\zeta_{i-1}) \theta_i \in \mathbb{M}_{i-1, n}(\bar{X}, \bar{Y}, -1/2)$ and $f(\zeta_{k}-\zeta_{k-1}) \mathcal{I}^{(2)}_{k}(\rtheta^{e, Y}_k) \in \mathbb{M}_{k-1,n}(\bar{X}, \bar{Y}, -1)$,  for some $c:=c(T, b_Y, \kappa)>4\kappa$, it holds 
\begin{align}
 \E\Big[& \bar{p}(T-\zeta_{n}, \bar{X}_{n}, \bar{Y}_{n}, x, y) \sum_{k = 1}^{n+1} (\zeta_{k}-\zeta_{k-1})\Big| \rtheta^{\mathcal{I}^{(2),n+1}_k}\Big| \Big| \tau^{n+1} \Big]  \nonumber \\
 & \leq C^{n+1} \int_{(\mathbb{R}^2)^{n}} \bar{q}_{4\kappa}(T- \zeta_n, x_n, y_n, x, y) \, \sum_{k=1}^{n+1} (\zeta_{k}-\zeta_{k-1}) (1-F(T-\zeta_n))^{-1} \prod_{i=k+1}^{n} (f(\zeta_{i}-\zeta_{i-1}))^{-1} (\zeta_i - \zeta_{i-1})^{-1/2} \nonumber \\
 & \quad \times (f(\zeta_k - \zeta_{k-1}))^{-1} (\zeta_k - \zeta_{k-1})^{-1} \prod_{i=1}^{k-1} (f(\zeta_{i}-\zeta_{i-1}))^{-1} (\zeta_{i}-\zeta_{i-1})^{-1/2} \prod_{i=1}^{n} \bar{q}_{4\kappa}(\zeta_{i}-\zeta_{i-1}, x_{i-1}, y_{i-1}, x_i, y_i) \, d\x_n d\y_n \nonumber \\
 & \leq C^{n+1} \bar{q}_{c}(T, x_0, y_0, x, y) \sum_{k=1}^{n+1}   (1-F(T-\zeta_n))^{-1} \prod_{i=1}^{n}(f(\zeta_{i}-\zeta_{i-1}))^{-1} \prod_{i=1, i \neq k}^{n} (\zeta_i - \zeta_{i-1})^{-1/2} \label{upper:bound:gradient:kernel:first:part}
\end{align}

\noindent where, for the first inequality we used the upper-estimate \eqref{upper:bound:transition:density:approximation:semigroup} and for the last inequality we used Lemma \ref{lemma:semigroup:property}. From similar arguments, one gets
\begin{align}
&  \E\Big[ \bar{p}(T-\zeta_{n}, \bar{X}_{n}, \bar{Y}_{n}, x, y) \sum_{k = 1}^{n+1} (\zeta_{k}-\zeta_{k-1})  \sum_{j=1}^{k} \Big|\rtheta^{C^{n+1}_j} \Big| + \Big|\rtheta^{\mathcal{I}^{(1), N_T+1}_k}_j\Big| \Big| \tau^{n+1} \Big] \nonumber\\
 & \leq C^{n+1} \bar{q}_{c}(T, x_0, y_0, x, y) \sum_{k=1}^{n+1} (\zeta_{k}-\zeta_{k-1}) \sum_{j=1}^{k} (1-F(T-\zeta_n))^{-1} \prod_{i=1}^{n} (f(\zeta_i-\zeta_{i-1}))^{-1}(\zeta_{i}-\zeta_{i-1})^{-1/2} [ 1 + \I_\seq{i=k} (\zeta_{i}-\zeta_{i-1})^{-1/2}  ].\label{upper:bound:gradient:kernel:second:part}
\end{align}
 

Now, from the upper-bounds \eqref{upper:bound:gradient:kernel:first:part} and \eqref{upper:bound:gradient:kernel:second:part} as well as the identity \eqref{probabilistic:representation:time:integrals}, we conclude
\begin{align}
 \sum_{n\geq0} &\E\Big[  \E\Big[ \bar{p}(T-\zeta_{n}, \bar{X}_{n}, \bar{Y}_n, x, y)  \sum_{k = 1}^{n+1} (\zeta_{k}-\zeta_{k-1}) \Big(  \Big|\rtheta^{\mathcal{I}^{(2),n+1}_k} \Big|+ \sum_{j=1}^{k} \Big|\rtheta^{C^{n+1}_j} \Big|+ \Big|\rtheta^{\mathcal{I}^{(1), n+1}_k}_j\Big|\Big) \Big| \tau^{n+1}\Big] \I_\seq{N_T=n} \Big] \nonumber\\
& \leq  \bar{q}_{c}(T, x_0, y_0, x, y) \sum_{n\geq0 } C^{n+1} \E\Big[\sum_{k=1}^{n+1}   (1-F(T-\zeta_n))^{-1} \prod_{i=1}^{n}(f(\zeta_{i}-\zeta_{i-1}))^{-1} \prod_{i=1, i \neq k}^{n} (\zeta_i - \zeta_{i-1})^{-1/2} \nonumber \\
& \quad + \sum_{k=1}^{n+1} (\zeta_{k}-\zeta_{k-1}) \sum_{j=1}^{k} (1-F(T-\zeta_n))^{-1} \prod_{i=1}^{n} (f(\zeta_i-\zeta_{i-1}))^{-1}(\zeta_{i}-\zeta_{i-1})^{-1/2} [ 1 + \I_\seq{i=k} (\zeta_{i}-\zeta_{i-1})^{-1/2}  ]\Big] \nonumber\\
& \leq \bar{q}_{c}(T, x_0, y_0, x, y) \sum_{n\geq0} C^{n+1} [(n+1)+ (n+1)(n+2)/2] T^{(n+1)/2}\frac{\Gamma^{n}(1/2)}{\Gamma(1+n/2)} \nonumber\\
& =  C T^{1/2}\bar{q}_{c}(T, x_0, y_0, x, y). \label{upper:grandient:density}
\end{align}

From the preceding inequality and Fubini's theorem, we thus get
\begin{align}
& \Big| \E\Big[\bar{p}(T-\zeta_{N_T}, \bar{X}_{N_T}, \bar{Y}_{N_T}, x, y) \sum_{k = 1}^{N_T+1} (\zeta_{k}-\zeta_{k-1}) \Big( \rtheta^{\mathcal{I}^{(2),N_T+1}_k} + \sum_{j=1}^{k} \rtheta^{C^{N_T+1}_j}+\rtheta^{\mathcal{I}^{(1), N_T+1}_k}_j\Big) \Big]\Big|\nonumber\\
& \leq C T^{1/2}\bar{q}_{c}(T, x_0, y_0, x, y) \label{upper:bound:gradient:density:prob:representation}
\end{align}
\noindent for some positive constant $C:=C(T)$ such that $T\mapsto C(T)$ is non-decreasing. Applying again Fubini's theorem allows to complete the proof of \eqref{gradient:heat:kernel:probabilistic:representation}. Hence,
\begin{align*}
T\partial_{y_0}& \mathbb{E}[h(X_T, Y_T)]  \\
&  = \int_{\mathbb{R}^2} h(x, y) \ \partial_{y_0} p(T, x_0, y_0, x, y) \, dx dy \\
& =  \int_{\mathbb{R}^2} h(x, y) \, \E\Big[\bar{p}(T-\zeta_{N_T}, \bar{X}_{N_T}, \bar{Y}_{N_T}, x, y) \sum_{k = 1}^{N_T+1} (\zeta_{k}-\zeta_{k-1}) \Big( \rtheta^{\mathcal{I}^{(2),N_T+1}_k} + \sum_{j=1}^{k} \rtheta^{C^{N_T+1}_j}+\rtheta^{\mathcal{I}^{(1), N_T+1}_k}_j\Big) \Big] \, dxdy
\end{align*}
\noindent for any $h\in \mathcal{C}^1_b(\mathbb{R}^2)$. A monotone class argument allows to conclude that the preceding identity is still valid for any bounded and measurable map $h$ defined over $\mathbb{R}^2$ and a standard approximation argument allows to extend it to $h\in \mathcal{B}_\gamma(\mathbb{R}^2)$ for any $0\leq \gamma < c^{-1}$, $c$ being the positive constant appearing in \eqref{upper:bound:gradient:density:prob:representation}. We eventually conclude from the preceding identity, \eqref{upper:grandient:density} combined with Fubini's theorem that
$$
T \partial_{y_0} \mathbb{E}[h(X_T, Y_T)] = \E\Big[ h(\bar{X}_{N_T+1}, \bar{Y}_{N_T+1}) \sum_{k = 1}^{N_T+1} (\zeta_{k}-\zeta_{k-1}) \Big( \rtheta^{\mathcal{I}^{(2),N_T+1}_k} + \sum_{j=1}^{k} \rtheta^{C^{N_T+1}_j}+\rtheta^{\mathcal{I}^{(1), N_T+1}_k}_j\Big) \Big] 
$$
\noindent for any $h\in \mathcal{B}_\gamma(\mathbb{R}^2)$ such that $0\leq \gamma < c^{-1}$.

\bigskip

\noindent \emph{Step 3: $L^{p}(\mathbb{P})$-moments for a renewal process with Beta jump times.}\\

	From the above formula, the proof of the ${L}^p(\mathbb{P})$-moment estimate when $N$ is a renewal process with Beta jump times follows by similar arguments as those employed at step 3 of the proof of Theorem \ref{theorem:probabilistic:formulation}. We omit the remaining technical details.

\end{proof}


\section{Numerical Results}\label{section:numerical:results}

In this section, we provide some numerical results for the unbiased Monte Carlo algorithm that stems from the probabilistic representation formula established in Theorem \ref{theorem:probabilistic:formulation} and the Bismut-Elworthy-Li formulae of Theorem \ref{thm:ibp:formulae} for the couple $(S_T, Y_T)$ that allows to compute the Delta and the Vega related to the option price of the vanilla option with payoff $h(S_T)$. We here consider the unique strong solution associated to the SDE \eqref{stochastic:volatility:model:sde} for three different models corresponding to three different diffusion coefficient function $\sigma_S$ and two different options, namely Call and digital Call options with maturity $T$ and strike $K$, with payoff functions $h(x, y) = (\exp(x)-K)_+$ and $h(x, y) = \textbf{1}_{\left\{ \exp(x) \geq K\right\}}$ respectively. For these three models, the drift function of the volatility process is defined by $b_Y(x) = \lambda_Y( \mu - x)$ and we fix the parameters as follows: $T=0.5$, $r=0.03$,  $K=1.5$, $x_0= \ln(s_0) = 0.4$, $Y_0 = 0.2$, \ $\sigma_Y(.) \equiv \sigma_Y =0.2$, $\lambda_Y = 0.5$, $\mu=0.3$ and $\rho=0.6$. We also consider two type of renewal process $N$: a Poisson process with intensity parameter $\lambda = 0.5$ and a renewal process with $Beta(1-\alpha, 1)$ jump times with parameters $\alpha=0.1$ and $\bar{\tau} = 2$.

\subsection{Black-Scholes Model}
We first consider the simple (toy) example corresponding to the Black-Scholes dynamics
$$
dS_t = r S_t \, dt + \sigma_S S_t \, dW_t, \quad dY_t = b_Y(Y_t) \, dt + \sigma_Y(Y_t) dB_t, \quad d\langle B, W\rangle_t = \rho dt, \, \rho \in (-1,1).
$$

\noindent with constant diffusion coefficient function $\sigma_S(.) \equiv \sigma_S>0$. The law of $(S_T, Y_T)$ can be computed explicitly so that analytical formulas are available for the price, Delta and Vega. Note that the discount factor $e^{-rT}$ has been added in our probabilistic representation formula for comparison purposes. In this example, we importantly remark that the dynamics of the Euler scheme writes 
\begin{equation}\label{euler:scheme:bs:example}
\left\{
\begin{array}{rl}
\bar{X}_{i+1} & = \bar{X}_{i} +  \Big(r   -\frac12 a_{S}\Big) (\zeta_{i+1}-\zeta_i) + \sigma_{S} \sqrt{\zeta_{i+1}-\zeta_i} Z^{1}_{i+1},\\
\bar{Y}_{i+1} & = m_i + \sigma_{Y} \sqrt{\zeta_{i+1}-\zeta_i} \left( \rho Z^1_{i+1} + \sqrt{1-\rho^2} Z^{2}_{i+1}\right),
\end{array}
\right.
\end{equation}
\noindent with $m_{i} = m_{\zeta_{i+1}-\zeta_i}(\bar{Y}_{i}) =  \mu + (\bar{Y}_i - \mu) e^{-\lambda (\zeta_{i+1} - \zeta_i)}$. Also, the weights $(\theta_i)_{1\leq i \leq N_T+1}$ in the probabilistic representation \eqref{probabilistic:formulation:theorem} of Theorem \ref{theorem:probabilistic:formulation} greatly simplifies, namely
\begin{align*}
\theta_{i}  =  (f(\zeta_{i}-\zeta_{i-1}))^{-1}   \mathcal{I}^{(2)}_{i}(b^{i}_Y), \, 1 \leq i \leq N_T, \quad \mbox{ and } \quad \theta_{N_{T}+1} & = (1 - F(T - \zeta_{N_T}))^{-1}.
\end{align*}

We perform $M_1 = 10^7$ Monte Carlo path simulations to approximate the price as well as the two Greeks and compare them with the corresponding values obtained using the standard Monte Carlo method combined with an Euler-Maruyama approximation scheme for the dynamics \eqref{stochastic:volatility:model:sde} with $M_2 = 160 000 $ Monte Carlo simulations paths and mesh size $\delta =T/n$ where $n = 200$. The Delta and Vega are obtained using the Monte Carlo finite difference approach combined with the Euler-Maruyama discretization scheme, that is, denoting by $E^{n}_{M_2}(s_0, y_0)$ the Monte Carlo estimator with Euler-Maruyama scheme, we compute $(E^{n}_{M_2}(s_0+\varepsilon, y_0) - E^{n}_{M_2}(s_0, y_0))/\varepsilon$ and $(E^{n}_{M_2}(s_0, y_0+\varepsilon) - E^{n}_{M_2}(s_0, y_0))/\varepsilon$ respectively with $\varepsilon = 10^{-2}$. The numerical results for the three different quantities are summarized in Table \ref{table:bs:price}, Table \ref{table:bs:delta}, Table \ref{table:bs:vega} respectively. The first column provides the value of the parameter $\sigma_S$. The second column stands for the value of the price, Delta or Vega obtained by the corresponding Black-Scholes formula. The third and fourth columns correspond to the value obtained by the Euler-Maruyama discretization scheme together with its $95\%$ confidence interval. The fifth and sixth (resp. seventh and eighth ) columns provide the estimated value with its $95\%$ confidence interval by our method in the case of Exponential sampling (resp. Beta sampling). 
We observe a good behaviour of the unbiased estimators for all three quantities and for all the values of the parameter $\sigma_S$.
 
 \begin{table}[htp]
	\centering
	\resizebox{\textwidth}{20mm}{
	\begin{tabular}{|c|c|c|c|c|c|c|c|} 
		\hline  
		\multicolumn{1}{|c|}{\multirow{2}*{ $ \sigma_S $ }}  &
		\multicolumn{1}{c|}{\multirow{2}*{  \tabincell{c}{ B-S \\  formula  }  }}  &
		\multicolumn{2}{c|}{ Euler Scheme  } & 
		  \multicolumn{2}{c|}{   Exponential  sampling  }& 
		  \multicolumn{2}{c|}{   Beta  sampling  }  \\
		   \cline{3-8} 
		&  &  Price   &  95\% CI  &    Price   &  95\% CI &    Price   &  95\% CI   \\
		\hline
		0.25 & 0.111804 &  0.111467& [0.110699,  0.112235] & 0.112285 & [0.111781, 0.112789] & 0.112159 & [0.111734, 0.112584]  \\
		\hline
		0.3 & 0.132621 &0.13293  &  [0.132337, 0.133524]&0.133054  & [0.132394, 0.133713]  & 0.132954 &[0.132482, 0.133425]  \\
		\hline
		0.4 &0.174152 & 0.17392 & [0.173103, 0.174737] & 0.175346 &[0.174557, 0.176135]  & 0.174584 &[0.173912, 0.175255] \\
		\hline
		 0.6&0.256572 & 0.258063 & [0.256727, 0.259399] & 0.255934 & [0.254592, 0.257277] & 0.256514 & [0.255419, 0.257608] \\
		\hline
	\end{tabular}
	} 
	\caption{Comparison between the unbiased Monte Carlo estimation and the Monte Carlo Euler-Maruyama scheme for the price of a Call option in the Black-Scholes model for different values of $\sigma_S$.}
	\label{table:bs:price}
\end{table}

 \begin{table}[htp]
	\centering
	\resizebox{\textwidth}{20mm}{
	\begin{tabular}{|c|c|c|c|c|c|c|c|} 
		\hline  
		\multicolumn{1}{|c|}{\multirow{2}*{ $\sigma_S$ }}  &
		\multicolumn{1}{c|}{\multirow{2}*{  \tabincell{c}{ B-S \\  formula  }  }}  &
		\multicolumn{2}{c|}{ Euler Scheme  } & 
		  \multicolumn{2}{c|}{   Exponential  sampling  }& 
		  \multicolumn{2}{c|}{   Beta  sampling  }  \\
		   \cline{3-8} 
		&  &  Delta    &  95\% CI  &   Delta    &  95\% CI &    Delta   &  95\% CI   \\
		\hline
		0.25 & 0.556589&  0.555686 & [0.553178, 0.558194] &0.554613 & [0.551336, 0.557891] & 0.556314 & [0.553141, 0.559488]  \\
		\hline
		0.3 & 0.560018 & 0.561099 &[0.559455, 0.562742] &  0.557398&[0.553517, 0.56128]  &  0.557561 & [0.554848, 0.560274] \\
		\hline
		0.4 &  0.569512 &  0.570293 &[0.568533, 0.572053]  & 0.569098  & [0.565706, 0.572489] & 0.56731 & [0.564279, 0.570341] \\
		\hline
		 0.6&0.592743 & 0.594988 &[0.592957, 0.59702] & 0.586245  & [0.582428, 0.590062]  & 0.588015 & [0.584572, 0.591457]\\
		\hline
	\end{tabular}
	} 
	\caption{Comparison between the unbiased Monte Carlo estimation and the Monte Carlo Euler-Maruyama scheme for the Delta of a Call option in the Black-Scholes model for different values of $\sigma_S$.}
	\label{table:bs:delta}
\end{table}

 \begin{table}[htp]
	\centering
	\resizebox{\textwidth}{20mm}{
	\begin{tabular}{|c|c|c|c|c|c|} 
		\hline  
		\multicolumn{1}{|c|}{\multirow{2}*{ $\sigma_S$ }}  &
		\multicolumn{1}{c|}{\multirow{2}*{  \tabincell{c}{ B-S \\  formula  }  }}  &
		  \multicolumn{2}{c|}{   Exponential  sampling  }& 
		  \multicolumn{2}{c|}{   Beta  sampling  }  \\
		   \cline{3-6} 
		&  &     Vega   &  95\% CI &    Vega   &  95\% CI   \\
		\hline
		0.25 & 0 & 0.000745386 & [-0.00102979, 0.00252057] & -0.000438032 & [-0.00211468, 0.00123862] \\
		\hline
		0.3 & 0 & -0.0013932 & [-0.0036299, 0.000843502] & -0.000491083 & [-0.00249688, 0.00151471] \\
		\hline
		0.4 & 0 & 0.00331309 & [0.000258292, 0.00636788] & -0.00117019 & [-0.00393975, 0.00159938]\\
		\hline
		 0.6& 0 & -0.00286877 & [-0.00777679, 0.00203925] & -0.0027807 & [-0.00718374, 0.00162235] \\
		\hline
	\end{tabular}
	} 
	\caption{Comparison between the unbiased Monte Carlo estimation for the Vega of a Call option in the Black-Scholes model for different values of $\sigma_S$.}
	\label{table:bs:vega}
\end{table}

\subsection{A Stein-Stein type model}
In this second example, we consider a Stein-Stein type model where the diffusion coefficient function for the spot price is an affine function, namely $\sigma_S(x) = \sigma_1 x + \sigma_2$ where $\sigma_1$ and $\sigma_2$ are two positive constants. Note carefully that $\sigma_S$ is not uniformly elliptic and bounded so that \A{AR} and \A{ND} are not satisfied. However, we heuristically choose $\sigma_1$ and $\sigma_2$ so that $\sigma_S(Y_t)$ is bounded and strictly positive with high probability. Also, analytical expressions for the coefficients are available, namely
{\small
\begin{align*}
a_{S,i} &= \int_0^{\zeta_{i+1}-\zeta_i} \Big[\sigma_1 \big(\mu + (\bar{Y}_i - \mu) e^{-\lambda_Y s}\big) + \sigma_2\Big]^2 \, ds, \\
&= (\sigma_1 \mu + \sigma_2)^2 (\zeta_{i+1}-\zeta_i) + \sigma_1^2 (\bar{Y}_i - \mu)^2 \frac{1 - e^{- 2 \lambda_Y (\zeta_{i+1} - \zeta_i)}}{2 \lambda_Y} + 2 \sigma_1 (\sigma_1 \mu + \sigma_2) (\bar{Y}_i - \mu) \frac{1 - e^{- \lambda_Y (\zeta_{i+1} - \zeta_i)}}{\lambda_Y}, \\
a'_{S,i} &= \sigma_1^2 (\bar{Y}_i - \mu) \frac{1 - e^{- 2 \lambda_Y (\zeta_{i+1} - \zeta_i)}}{\lambda_Y} + 2\sigma_1  (\sigma_1 \mu + \sigma_2) \frac{1 - e^{- \lambda_Y (\zeta_{i+1} - \zeta_i)}}{\lambda_Y}, \\
\rho_i &= \rho \frac{\int_0^{\zeta_{i+1}-\zeta_i} \Big[ \alpha \big(\mu + (\bar{Y}_i - \mu) e^{-\lambda s}\big) + C \Big] \, ds}{\sigma_{S,i} \sqrt{\zeta_{i+1}-\zeta_i}} = \rho \frac{\alpha (\bar{Y}_i - \mu) (1 - e^{- \lambda (\zeta_{i+1} - \zeta_i)}) / \lambda + (\sigma_1 \mu + \sigma_2) (\zeta_{i+1}-\zeta_i)}{\sigma_{S,i}  \sqrt{\zeta_{i+1}-\zeta_i}}, \\
\rho'_i &= \rho \frac{\sigma_{S,i} \big( \sigma_1 (1 - e^{- \lambda_Y (\zeta_{i+1} - \zeta_i)}) / \lambda_Y \big) - \sigma'_{S,i} \big( \sigma_1 (\bar{Y}_i - \mu) (1 - e^{- \lambda_Y (\zeta_{i+1} - \zeta_i)}) / \lambda_Y + (\sigma_1 \mu + \sigma_2) (\zeta_{i+1}-\zeta_i) \big)}{a_{S,i}  \sqrt{\zeta_{i+1}-\zeta_i}}.
\end{align*} 
}

The parameters for the unbiased Monte Carlo method and the Monte Carlo method combined with an Euler-Maruyama approximation scheme are chosen as in the first example. The numerical results related to the price, Delta and Vega are provided in Table \ref{table:stein:price:call}, Table \ref{table:stein:delta:call}, Table \ref{table:stein:vega:call} respectively for the Call option and in Table \ref{table:stein:price:digital}, Table \ref{table:stein:delta:digital}, Table \ref{table:stein:vega:digital} for the digital Call option. In spite of the fact that the main assumptions are not satisfied, we again observe a good performance of the unbiased estimators for all three quantities and for all the values of the parameters $\sigma_1$, $\sigma_2$, except for the computation of the Vega of a Call option for large values of $\sigma_1$ and $\sigma_2$.
 \begin{table}[htp]
	\centering
	\resizebox{\textwidth}{20mm}{
	\begin{tabular}{|c|c|c|c|c|c|c|c|} 
		\hline  
		\multicolumn{1}{|c|}{\multirow{2}*{ $ \sigma_1 $ }}  &
		\multicolumn{1}{c|}{\multirow{2}*{ $ \sigma_2 $ }}  &
		\multicolumn{2}{c|}{ Euler Scheme  } & 
		  \multicolumn{2}{c|}{   Exponential  sampling  }& 
		  \multicolumn{2}{c|}{   Beta  sampling  }  \\
		   \cline{3-8} 
		&  & Price &    95\% CI  &    Price   &  95\% CI &    Price   &  95\% CI   \\
		\hline
		0.1 & 0.15  & 0.0790885 & [0.0784919, 0.0796851] &0.0794717  &[0.0785927, 0.0803508] & 0.0791559 & [0.0786344, 0.0796774] \\
		\hline
	0.2&	0.25 &  0.129665 & [0.128602, 0.130728] & 0.129818 & [0.128292, 0.131345] & 0.129055 & [0.126215, 0.131895] \\
		\hline
	0.3&	0.4 &   0.202324 & [0.200507, 0.20414] & 0.200155 &[0.199442, 0.200868] & 0.200371 &[0.199675, 0.201066]\\
		\hline
	0.4&	 0.5& 0.249249 & [0.246866, 0.251632] & 0.249114  & [0.248217, 0.250012] & 0.249279 & [0.248237, 0.250322]  \\
		\hline
	\end{tabular}
	} 
	\caption{Comparison between the unbiased Monte Carlo estimation for the price of a Call option in the Stein-Stein type model for different values of the parameters $\sigma_1$ and $\sigma_2$.}
	\label{table:stein:price:call}
\end{table}
 \begin{table}[htp]
	\centering
	\resizebox{\textwidth}{20mm}{
	\begin{tabular}{|c|c|c|c|c|c|c|c|} 
		\hline  
		\multicolumn{1}{|c|}{\multirow{2}*{ $ \sigma_1 $ }}  &
		\multicolumn{1}{c|}{\multirow{2}*{ $ \sigma_2 $ }}  &
		\multicolumn{2}{c|}{ Euler Scheme  } & 
		  \multicolumn{2}{c|}{   Exponential  sampling  }& 
		  \multicolumn{2}{c|}{   Beta  sampling  }  \\
		   \cline{3-8} 
		&  & Delta &    95\% CI  &    Delta   &  95\% CI &    Delta   &  95\% CI   \\
		\hline
		0.1 & 0.15 &0.545257 & [0.542601, 0.547914]& 0.542838 &  [0.534227, 0.551448]&  0.540304 & [0.535519, 0.545088] \\
		\hline
	0.2&	0.25 &  0.548642 & [0.54574, 0.551544] & 0.541611 & [0.533812, 0.549409] & 0.535165 & [0.51798, 0.552351] \\
		\hline
	0.3&	0.4 & 0.566919 & [0.563629, 0.570208]& 0.555688 &[0.548827, 0.56255]  & 0.558808  &[0.553545, 0.56407] \\
		\hline
	0.4&	 0.5&0.579445 & [0.575861, 0.583028] & 0.569003 &[0.559972, 0.578034]  &  0.568666 &[0.561328, 0.576004] \\
		\hline
	\end{tabular}
	} 
	\caption{Comparison between the unbiased Monte Carlo estimation for the Delta of a Call option in the Stein-Stein type model for different values of the parameters $\sigma_1$ and $\sigma_2$.}
	\label{table:stein:delta:call}
\end{table}
 \begin{table}[htp]
	\centering
	\resizebox{\textwidth}{20mm}{
	\begin{tabular}{|c|c|c|c|c|c|c|c|} 
		\hline  
		\multicolumn{1}{|c|}{\multirow{2}*{ $ \sigma_1 $ }}  &
		\multicolumn{1}{c|}{\multirow{2}*{ $ \sigma_2 $ }}  &
		\multicolumn{2}{c|}{ Euler Scheme  } & 
		  \multicolumn{2}{c|}{   Exponential  sampling  }& 
		  \multicolumn{2}{c|}{   Beta  sampling  }  \\
		   \cline{3-8} 
		&  & Vega &    95\% CI  &    Vega   &  95\% CI &    Vega  &  95\% CI   \\
		\hline
		0.1 & 0.15 &0.0370801 & [0.0367679, 0.0373923] & 0.0340152 & [0.0317984, 0.036232] &0.0350342  &  [0.0333134, 0.036755] \\
		\hline
	0.2&	0.25 &0.0738723 & [0.0731769, 0.0745676]& 0.0704958 & [0.0662385, 0.074753] & 0.0652897  & [0.0612527, 0.0693267]  \\
		\hline
	0.3&	0.4 &  0.11114 & [0.109907, 0.112373]& 0.0899367 & [0.0830599, 0.0968136] & 0.10303  & [0.0912086, 0.114851] \\
		\hline
	0.4&	 0.5& 0.14385 & [0.142055, 0.145645] & 0.122496 & [0.109106, 0.135885] & 0.133235  & [0.125356, 0.141114] \\
		\hline
	\end{tabular}
	} 
	\caption{Comparison between the unbiased Monte Carlo estimation for the Vega of a Call option in the Stein-Stein type model for different values of the parameters $\sigma_1$ and $\sigma_2$.}
	\label{table:stein:vega:call}
\end{table}

 \begin{table}[htp]
	\centering
	\resizebox{\textwidth}{23mm}{
	\begin{tabular}{|c|c|c|c|c|c|c|c|} 
		\hline  
		\multicolumn{1}{|c|}{\multirow{2}*{ $ \sigma_1 $ }}  &
		\multicolumn{1}{c|}{\multirow{2}*{ $ \sigma_2 $ }}  &
		\multicolumn{2}{c|}{ Euler Scheme  } & 
		  \multicolumn{2}{c|}{   Exponential  sampling  }& 
		  \multicolumn{2}{c|}{   Beta  sampling  }  \\
		   \cline{3-8} 
		&  & Price &    95\% CI  &    Price   &  95\% CI &    Price   &  95\% CI   \\
		\hline
		0 &  0.3  &  0.469652 & [0.467241, 0.472063] & 0.469251 & [0.468668, 0.469834]  &  0.468994 & [0.468439, 0.469548]  \\
		\hline
		0.1 & 0.15  &  0.491121 & [0.488708, 0.493535]&  0.489251 & [0.488158, 0.490344]  &  0.489883 & [0.489044, 0.490723] \\
		\hline
	0.2&	0.25 &  0.459518 & [0.45711, 0.461926] &  0.458577 & [0.457324, 0.459829]  &  0.458555 & [0.457574, 0.459535] \\
		\hline
	0.3&	0.4 &   0.430451 & [0.428057, 0.432845] &  0.428559 & [0.42734, 0.429779]  &  0.428941 & [0.428074, 0.429807 \\
		\hline
	0.4&	 0.5 &  0.408908 & [0.406529, 0.411286] &0.40788 & [0.404907, 0.410852] &0.409511 & [0.408526, 0.410496] \\
		\hline
	\end{tabular}
	} 
	\caption{Comparison between the unbiased Monte Carlo estimation for the price of a digital Call option in the Stein-Stein type model for different values of the parameters $\sigma_1$ and $\sigma_2$.}
	\label{table:stein:price:digital}
\end{table}

 \begin{table}[htp]
	\centering
	\resizebox{\textwidth}{23mm}{
	\begin{tabular}{|c|c|c|c|c|c|c|c|} 
		\hline  
		\multicolumn{1}{|c|}{\multirow{2}*{ $ \sigma_1 $ }}  &
		\multicolumn{1}{c|}{\multirow{2}*{ $ \sigma_2 $ }}  &
		\multicolumn{2}{c|}{ Euler Scheme  } & 
		  \multicolumn{2}{c|}{   Exponential  sampling  }& 
		  \multicolumn{2}{c|}{   Beta  sampling  }  \\
		   \cline{3-8} 
		&  & Delta &    95\% CI  &    Delta   &  95\% CI &    Delta   &  95\% CI   \\
		\hline
		0 &  0.3  &  1.22307 & [1.19252, 1.25363] &  1.24579 & [1.24326, 1.24833]  &  1.24408 & [1.24165, 1.24651] \\
		\hline
		0.1 & 0.15  &  2.17706 & [2.13691, 2.21721] &  2.17577 & [2.16695, 2.18459]  &  2.18049 & [2.17398, 2.18701]  \\
		\hline
	0.2&	0.25 &  1.29839 & [1.26695, 1.32984] &  1.26832 & [1.26267, 1.27397]  &  1.26854 & [1.26428, 1.27279] \\
		\hline
	0.3&	0.4 &  0.776519 & [0.752002, 0.801036] &  0.792788 & [0.789598, 0.795978]  &  0.793139 & [0.790702, 0.795577] \\
		\hline
	0.4&	 0.5 &0.606688 & [0.58496, 0.628416]&0.618031 & [0.610424, 0.625638] & 0.621753 & [0.619061, 0.624446] \\
		\hline
	\end{tabular}
	} 
	\caption{Comparison between the unbiased Monte Carlo estimation for the Delta of a digital Call option in the Stein-Stein type model for different values of the parameters $\sigma_1$ and $\sigma_2$.}
	\label{table:stein:delta:digital}
\end{table}

 \begin{table}[htp]
	\centering
	\resizebox{\textwidth}{23mm}{
	\begin{tabular}{|c|c|c|c|c|c|c|c|} 
		\hline  
		\multicolumn{1}{|c|}{\multirow{2}*{ $ \sigma_1 $ }}  &
		\multicolumn{1}{c|}{\multirow{2}*{ $ \sigma_2 $ }}  &
		\multicolumn{2}{c|}{ Euler Scheme  } & 
		  \multicolumn{2}{c|}{   Exponential  sampling  }& 
		  \multicolumn{2}{c|}{   Beta  sampling  }  \\
		   \cline{3-8} 
		&  & Vega &    95\% CI  &    Vega   &  95\% CI &    Vega  &  95\% CI   \\
		\hline
		0 &  0.3  &  0 & [0, 0] &  0.000481062 & [-0.00499082, 0.00595295]  &  -0.000755278 & [-0.00588091, 0.00437035] \\
		\hline
		0.1 & 0.15  &  -0.0200101 & [-0.0308873, -0.00913292] &  -0.0249364 & [-0.0346885, -0.0151842]  &  -0.0286496 & [-0.0358769, -0.0214223] \\
		\hline
	0.2&	0.25 &-0.0246278 & [-0.0366948, -0.0125608] &  -0.032211 & [-0.0436935, -0.0207285]  &  -0.0311689 & [-0.0392428, -0.023095] \\
		\hline
	0.3&	0.4 & -0.0354025 & [-0.04987, -0.0209349]&  -0.0422004 & [-0.0518535, -0.0325472]  &  -0.0413018 & [-0.0489346, -0.0336691] \\
		\hline
	0.4&	 0.5 &  -0.0492556 & [-0.0663201, -0.0321911] & -0.0512594 & [-0.0638074, -0.0387114] & -0.0517876 & [-0.0597881, -0.0437871] \\
		\hline
	\end{tabular}
	} 
	\caption{Comparison between the unbiased Monte Carlo estimation for the Vega of a digital Call option in the Stein-Stein type model for different values of the parameters $\sigma_1$ and $\sigma_2$.}
	\label{table:stein:vega:digital}
\end{table}

\vspace{5\baselineskip}
\subsection{A model with a periodic diffusion coefficient function}
In our last example, the volatility of spot price takes the following form $\sigma_S(x) = \sigma_1 \cos(x) + \sigma_2$ where $\sigma_1$ and $\sigma_2$ are two positive constants such that $\sigma_2-\sigma_1>0$ in order to ensure that \A{ND} is satisfied. Here, the coefficients appearing in the dynamics \eqref{euler:scheme} write{\small
\begin{align*}
a_{S,i} &= \int_0^{\zeta_{i+1}-\zeta_i} \Big[\sigma_1 \cos \big(\mu + (\bar{Y}_i - \mu) e^{-\lambda_Y s}\big) + \sigma_2\Big]^2 \, ds, \\
a'_{S,i} &= - 2 \alpha \int_0^{\zeta_{i+1}-\zeta_i} e^{- \lambda_Y s} \sin \big(\mu + (\bar{Y}_i - \mu) e^{-\lambda_Y s}\big) \Big[\sigma_1 \cos \big(\mu + (\bar{Y}_i - \mu) e^{-\lambda_Y s}\big) + \sigma_2\Big] \, ds, \\
\rho_i &= \rho \frac{\int_0^{\zeta_{i+1}-\zeta_i} \Big[ \sigma_1 \cos \big(\mu + (\bar{Y}_i - \mu) e^{-\lambda_Y s}\big) + \sigma_2 \Big] \, ds}{\sigma_{S,i} \sqrt{\zeta_{i+1}-\zeta_i}}, \\
\rho'_i &= - \rho \frac{ \sigma_1 \sigma_{S,i}  \int_0^{\zeta_{i+1}-\zeta_i} e^{- \lambda_Y s} \sin \big(\mu + (\bar{Y}_i - \mu) e^{-\lambda_Y s}\big) ds + \sigma'_{S,i} \int_0^{\zeta_{i+1}-\zeta_i} \Big[ \sigma_1 \cos \big(\mu + (\bar{Y}_i - \mu) e^{-\lambda_Y s}\big) + \sigma_2 \Big] \, ds}{a_{S,i} \sqrt{\zeta_{i+1}-\zeta_i}}
\end{align*}
}

\noindent and no analytical expressions are available. However, a simple numerical integration method can be employed for the computation of the above integrals. We here use Simpson's 3/8 rule which for a real-valued $\mathcal{C}^{4}([0,T])$ function $g$ writes as follows
\begin{align*}
\forall t \in [0,T], \quad \int_0^t g(s)ds \approx \frac{t}{8} \left(g(0) + 3g\left(\frac{t}{3}\right) +  3g\left(\frac{2t}{3}\right) + g(t)\right)
\end{align*}
\noindent with an error given by $ g^{(4)}(t') T^5/6480$ for some $t' \in [0,T]$.

The parameters of the unbiased Monte Carlo method and the Monte Carlo Euler-Maruyama scheme remain unchanged. The numerical results related to the price, Delta and Vega are provided in Table \ref{table:cosinus:price:call}, Table \ref{table:cosinus:delta:call}, Table \ref{table:cosinus:vega:call} respectively for the Call option and in Table \ref{table:cosinus:price:digital}, Table \ref{table:cosinus:delta:digital}, Table \ref{table:cosinus:vega:digital} for the digital Call option. Here again, the unbiased estimators perform very well for all range of values of the parameters.

 \begin{table}[htp]
	\centering
	\resizebox{\textwidth}{20mm}{
	\begin{tabular}{|c|c|c|c|c|c|c|c|} 
		\hline  
		\multicolumn{1}{|c|}{\multirow{2}*{ $ \sigma_1 $ }}  &
		\multicolumn{1}{c|}{\multirow{2}*{ $ \sigma_2 $ }}  &
		\multicolumn{2}{c|}{ Euler Scheme  } & 
		  \multicolumn{2}{c|}{   Exponential  sampling  }& 
		  \multicolumn{2}{c|}{   Beta  sampling  }  \\
		   \cline{3-8} 
		&  & Price &    95\% CI  &    Price   &  95\% CI &    Price   &  95\% CI   \\
		\hline
		0.1 & 0.15 & 0.110649 & [0.109801, 0.111497]&0.111245  & [0.110746, 0.111745] &  0.111163 & [0.11071, 0.111617] \\
		\hline
	0.2&	0.25 & 0.193525 & [0.191897, 0.195154] & 0.19476 & [0.19378, 0.19574] & 0.193705 & [0.192939, 0.19447] \\
		\hline
	0.3&	0.4 & 0.294275 & [0.291444, 0.297106] &  0.294418& [0.292502, 0.296333] &  0.294724 &[0.293178, 0.296269] \\
		\hline
	0.4&	 0.5&  0.371509 & [0.367579, 0.375439] & 0.3739 & [0.371509, 0.376292] & 0.373974 & [0.372141, 0.375806] \\
		\hline
	\end{tabular}
	} 
	\caption{Comparison between the unbiased Monte Carlo estimation for the price of a Call option in the model with $\sigma_S(x) = \sigma_1 \cos(x) + \sigma_2$ for different values of the parameters $\sigma_1$ and $\sigma_2$.}
	\label{table:cosinus:price:call}
\end{table}

\begin{table}[htp]
	\centering
	\resizebox{\textwidth}{20mm}{
	\begin{tabular}{|c|c|c|c|c|c|c|c|} 
		\hline  
		\multicolumn{1}{|c|}{\multirow{2}*{ $ \sigma_1 $ }}  &
		\multicolumn{1}{c|}{\multirow{2}*{ $ \sigma_2 $ }}  &
		\multicolumn{2}{c|}{ Euler Scheme  } & 
		  \multicolumn{2}{c|}{   Exponential  sampling  }& 
		  \multicolumn{2}{c|}{   Beta  sampling  }  \\
		   \cline{3-8} 
		&  & Delta &    95\% CI  &    Delta  &  95\% CI &    Delta  &  95\% CI   \\
		\hline
		0.1 & 0.15 & 0.556917 & [0.554118, 0.559717] &0.560077  & [0.556733, 0.563422] & 0.555364 & [0.552636, 0.558092] \\
		\hline
	0.2&	0.25 & 0.577937 & [0.574727, 0.581148]&  0.579704 &  [0.575622, 0.583785] &  0.577287 & [0.574331, 0.580243] \\
		\hline
	0.3&	0.4 &0.605788 & [0.601947, 0.60963] & 0.604575 & [0.602771, 0.606379] &0.601188  & [0.599354, 0.603021] \\
		\hline
	0.4&	 0.5&0.62865 & [0.624204, 0.633096] &0.623698  & [0.618519, 0.628878] & 0.626259 & [0.622246, 0.630271] \\
		\hline
	\end{tabular}
	} 
	\caption{Comparison between the unbiased Monte Carlo estimation for the Delta of a Call option in the model with $\sigma_S(x) = \sigma_1 \cos(x) + \sigma_2$ for different values of the parameters $\sigma_1$ and $\sigma_2$.}
	\label{table:cosinus:delta:call}
\end{table}

\begin{table}[htp]
	\centering
	\resizebox{\textwidth}{20mm}{
	\begin{tabular}{|c|c|c|c|c|c|c|c|} 
		\hline  
		\multicolumn{1}{|c|}{\multirow{2}*{ $ \sigma_1 $ }}  &
		\multicolumn{1}{c|}{\multirow{2}*{ $ \sigma_2 $ }}  &
		\multicolumn{2}{c|}{ Euler Scheme  } & 
		  \multicolumn{2}{c|}{   Exponential  sampling  }& 
		  \multicolumn{2}{c|}{   Beta  sampling  }  \\
		   \cline{3-8} 
		&  & Vega &    95\% CI  &    Vega   &  95\% CI &    Vega   &  95\% CI   \\
		\hline
		0.1 & 0.15 & -0.00773549 & [-0.00782648, -0.00764449] & -0.00805159 & [-0.00985368, -0.0062495] & -0.00846248 &[-0.0101504, -0.00677453]  \\
		\hline
	0.2&	0.25 &-0.0156691 & [-0.0158849, -0.0154532]& -0.0161045 & [-0.0194751, -0.0127339] &  -0.0137565 &[-0.0169305, -0.0105825] \\
		\hline
	0.3&	0.4 &-0.0235822 & [-0.0240098, -0.0231547]  & -0.0177797 & [-0.0236385, -0.0119209]  &  -0.0232616 &[-0.0288379, -0.0176852] \\
		\hline
	0.4&	 0.5&-0.030774 & [-0.0314484, -0.0300996]&-0.031729 & [-0.0405267, -0.0229313] &  -0.0327252 & [-0.0402293, -0.0252211] \\
		\hline
	\end{tabular}
	} 
	\caption{Comparison between the unbiased Monte Carlo estimation for the Vega of a Call option in the model with $\sigma_S(x) = \sigma_1 \cos(x) + \sigma_2$ for different values of the parameters $\sigma_1$ and $\sigma_2$.}
	\label{table:cosinus:vega:call}
\end{table}

 \begin{table}[htp]
	\centering
	\resizebox{\textwidth}{23mm}{
	\begin{tabular}{|c|c|c|c|c|c|c|c|} 
		\hline  
		\multicolumn{1}{|c|}{\multirow{2}*{ $ \sigma_1 $ }}  &
		\multicolumn{1}{c|}{\multirow{2}*{ $ \sigma_2 $ }}  &
		\multicolumn{2}{c|}{ Euler Scheme  } & 
		  \multicolumn{2}{c|}{   Exponential  sampling  }& 
		  \multicolumn{2}{c|}{   Beta  sampling  }  \\
		   \cline{3-8} 
		&  & Price &    95\% CI  &    Price   &  95\% CI &    Price   &  95\% CI   \\
		\hline
		0 &  0.3  &  0.470206 & [0.467795, 0.472617] &  0.468756 & [0.468174, 0.469338]  &  0.46883 & [0.468273, 0.469387] \\
		\hline
		0.1 & 0.15  & 0.481972 & [0.479559, 0.484385]&  0.481373 & [0.480778, 0.481968]  &  0.481499 & [0.480937, 0.482061] \\
		\hline
	0.2&	0.25 & 0.446163 & [0.443761, 0.448566] & 0.445241 & [0.444661, 0.445821]  &  0.445228 & [0.444679, 0.445778]  \\
		\hline
	0.3&	0.4 &  0.407307 & [0.40493, 0.409684]  & 0.407842 & [0.407286, 0.408398] & 0.407422 & [0.40689, 0.407954] \\
		\hline
	0.4&	 0.5 & 0.379459 & [0.37711, 0.381808]& 0.380161 & [0.379604, 0.380717] & 0.379669 & [0.379146, 0.380193]  \\
		\hline
	\end{tabular}
	} 
	\caption{ Comparison between the unbiased Monte Carlo estimation for the price of a digital Call option in the model with $\sigma_S(x) = \sigma_1 \cos(x) + \sigma_2$ for different values of the parameters $\sigma_1$ and $\sigma_2$.}
	\label{table:cosinus:price:digital}
\end{table}

\begin{table}[htp]
	\centering
	\resizebox{\textwidth}{23mm}{
	\begin{tabular}{|c|c|c|c|c|c|c|c|} 
		\hline  
		\multicolumn{1}{|c|}{\multirow{2}*{ $ \sigma_1 $ }}  &
		\multicolumn{1}{c|}{\multirow{2}*{ $ \sigma_2$ }}  &
		\multicolumn{2}{c|}{ Euler Scheme  } & 
		  \multicolumn{2}{c|}{   Exponential  sampling  }& 
		  \multicolumn{2}{c|}{   Beta  sampling  }  \\
		   \cline{3-8} 
		&  & Delta &    95\% CI  &    Delta  &  95\% CI &    Delta  &  95\% CI   \\
		\hline
		0 &  0.3  & 1.24454 & [1.21372, 1.27535]  & 1.24282 & [1.24031, 1.24533] & 1.24579 & [1.24335, 1.24822]  \\
		\hline
		0.1 & 0.15  & 1.5064 & [1.47264, 1.54016]& 1.51062 & [1.50751, 1.51373] & 1.51253 & [1.50958, 1.51549]  \\
		\hline
	0.2&	0.25 & 0.846474 & [0.820904, 0.872044] & 0.835956 & [0.834251, 0.83766] & 0.834907 & [0.833284, 0.836531]  \\
		\hline
	0.3&	0.4 &  0.512589 & [0.492588, 0.53259]&0.528498 & [0.527433, 0.529563]  &   0.527844 & [0.526825, 0.528863] \\
		\hline
	0.4&	 0.5 & 0.399506 & [0.381818, 0.417194]&  0.40319 & [0.402367, 0.404012] &  0.402428 & [0.401642, 0.403214]  \\
		\hline
	\end{tabular}
	} 
	\caption{ Comparison between the unbiased Monte Carlo estimation for the Delta of a digital Call option in the model with $\sigma_S(x) = \sigma_1 \cos(x) + \sigma_2$ for different values of the parameters $\sigma_1$ and $\sigma_2$.}
	\label{table:cosinus:delta:digital}
\end{table}

\begin{table}[htp]
	\centering
	\resizebox{\textwidth}{23mm}{
	\begin{tabular}{|c|c|c|c|c|c|c|c|} 
		\hline  
		\multicolumn{1}{|c|}{\multirow{2}*{ $ \sigma_1 $ }}  &
		\multicolumn{1}{c|}{\multirow{2}*{ $ \sigma_2 $ }}  &
		\multicolumn{2}{c|}{ Euler Scheme  } & 
		  \multicolumn{2}{c|}{   Exponential  sampling  }& 
		  \multicolumn{2}{c|}{   Beta  sampling  }  \\
		   \cline{3-8} 
		&  & Vega &    95\% CI  &    Vega   &  95\% CI &    Vega   &  95\% CI   \\
		\hline
		0 &  0.3  & 0 & [0, 0] & 0.00122669 & [-0.00426466, 0.00671804] & -0.00228675 & [-0.00742253, 0.00284903]  \\
		\hline
		0.1 & 0.15  & 0.00769619 & [-0.000285705, 0.0156781]&  0.00900717 & [0.00339831, 0.014616]  & 0.00730275 & [0.0021288, 0.0124767]  \\
		\hline
	0.2&	0.25 & 0.0138531 & [0.00480268, 0.0229036] & 0.0146584 & [0.00906018, 0.0202566] & 0.0131819 & [0.00819735, 0.0181665] \\
		\hline
	0.3&	0.4 &0.0107747 & [0.00279286, 0.0187565] & 0.00807909 & [0.00292412, 0.0132341] &  0.0116897 & [0.00689717, 0.0164823] \\
		\hline
	0.4&	 0.5 &  0.0153924 & [0.00585238, 0.0249324] & 0.0152859 & [0.0102097, 0.0203621] &  0.0164134 & [0.0117414, 0.0210853]  \\
		\hline
	\end{tabular}
	} 
	\caption{Comparison between the unbiased Monte Carlo estimation for the Vega of a digital Call option in the model with $\sigma_S(x) = \sigma_1 \cos(x) + \sigma_2$ for different values of the parameters $\sigma_1$ and $\sigma_2$.}
	\label{table:cosinus:vega:digital}
\end{table}

   \vspace{10\baselineskip}

\bibliographystyle{abbrv}
\bibliography{biblio}

%
%

\appendix

\section{Proof of Theorem \ref{theorem:probabilistic:formulation} and Lemma \ref{lem:transfer:derivative}}\label{section:appendix}

\subsection{Proof of Theorem \ref{theorem:probabilistic:formulation}}\label{proof:probabilistic:representation:marginal:law}

The proof is divided into three steps. In the first part, we establish the probabilistic representation for a bounded and continuous function $h$. We then provide the extension to measurable maps satisfying the growth condition \ref{growth:assumption:test:function}. We eventually conclude by establishing the $L^{p}$-moments when the jump times are distributed according to the Beta law.

Denote by $\mathcal{L}$ and $(\bar{\mathcal{L}}_t)_{t\geq0}$ the infinitesimal generators of $(P_t)_{t\geq0}$ and $(\bar{P}_t)_{t\geq0}$ respectively given by 
\begin{align*}
\mathcal{L} f(x, y) & = (r-\frac12 a_S(y))\partial_x f(x, y) + \frac12 a_S(y) \partial^2_x f(x, y) + b_Y(y) \partial_y f(x, y) + \frac12 a_Y(y)  \partial^2_y f(x, y) + \rho (\sigma_S \sigma_Y)(y) \partial^2_{x, y} f(x, y), \\
\bar{\mathcal{L}}_{t} f(x, y) & = (r-\frac12 a_{S}(m_t(y_0))) \partial_x f(x, y) + \frac12 a_{S}(m_t(y_0)) \partial^2_x f(x, y) + b_Y(m_t(y_0))\partial_y f(x, y) + \frac12 a_Y(m_t(y_0))  \partial^2_y f(x, y) \\
& \quad + \rho (\sigma_S \sigma_{Y})(m_t(y_0)) \partial^2_{x, y} f(x, y)
\end{align*}
\noindent for any $f \in \mathcal{C}^2_b(\mathbb{R}^2)$.

\smallskip

\emph{Step 1: Probabilistic representation for a bounded and continuous map $h$}\\

We establish a first order expansion of the Markov semigroup $(P_t)_{t\geq0}$ around $(\bar{P}_t)_{t\geq0}$. We apply It\^o's rule to the map $[0,t] \times \mathbb{R}^2 \ni (s, x, y) \mapsto P_{t-s}h(x, y) \in \mathcal{C}^{1,2}_b([0,t] \times \mathbb{R}^2)$ for $h \in \mathcal{C}^{\infty}_b(\mathbb{R}^2)$, observing that $\partial_s P_{t-s}h(x, y) = -\mathcal{L} P_{t-s}h(x, y)$. We obtain
\begin{align*}
h(\bar{X}_t, \bar{Y}_t) & = P_t h(x_0,y_0) + \int_0^t \Big(\partial_s P_{t-s} h(\bar{X}_s, \bar{Y}_s) + \bar{\mathcal{L}}_sP_{t-s} h(\bar{X}_s, \bar{Y}_s)  \Big) \, ds + \, M_t \\
& = P_t h(x_0, y_0) + \int_0^t  (\bar{\mathcal{L}}_{s} - \mathcal{L}) P_{t-s}h(\bar{X}_s, \bar{Y}_s) \, ds + M_t
\end{align*}

\noindent where $M:=(M_t)_{t\geq0}$ is a square integrable martingale. We then take expectation in the previous expression, make use of Fubini's theorem and finally let $t\uparrow T$ by dominated convergence theorem so that
\begin{align}
P_T h(x_0,y_0) & = \bar{P}_T h(x_0, y_0) + \int_0^T \mathbb{E}[(\mathcal{L}-\bar{\mathcal{L}}_{s})P_{T-s}h(\bar{X}_s, \bar{Y}_s)] \, ds \nonumber \\
 &= \mathbb{E}[h(\bar{X}^{x_0}_T, \bar{Y}^{y_0}_T)] + \int_0^T \mathbb{E}\Big[\frac12 (a_S(\bar{Y}^{y_0}_s) - a_S(m_s(y_0)) [\partial^2_x P_{T-s}h(\bar{X}^{x_0}_s, \bar{Y}^{y_0}_s) -\partial_x P_{T-s}h(\bar{X}^{x_0}_s, \bar{Y}^{y_0}_s) )]\Big] \, ds \nonumber \\
 & + \int_0^T \mathbb{E}\Big[\frac12 (a_Y(\bar{Y}^{y_0}_s) - a_Y(m_s(y_0))) \partial^2_y P_{T-s}h(\bar{X}^{x_0}_s, \bar{Y}^{y_0}_s) + (b_Y(\bar{Y}^{y_0}_s) - b_Y(m_s(y_0))) \partial_y P_{T-s}h(\bar{X}^{x_0}_s, \bar{Y}^{y_0}_s) \Big] \, ds\nonumber\\
 & + \int_0^T \mathbb{E}\Big[\rho((\sigma_S\sigma_Y)(\bar{Y}^{x_0}_s)-(\sigma_S\sigma_Y)(m_s(y_0))) \partial^2_{x, y} P_{T-s}h(\bar{X}^{x_0}_s, \bar{Y}^{y_0}_s)\Big] \, ds. \label{first:step:expansion:before:prob:interpretation}
\end{align}

We now rewrite the previous first order expansion using the Markov chain $(\bar{X}_i, \bar{Y}_i)_{0\leq i \leq N_T+1}$ and the renewal process $N$. From the previous identity, the definition of $\theta_{N_T+1}$ in \eqref{definition:theta:theorem:probabilistic:representation} and the identity \eqref{probabilistic:representation:time:integrals}, we directly obtain
\begin{align}
P_T h(x_0, y_0)&  = \mathbb{E}[h(\bar{X}_{N_T+1}, \bar{Y}_{N_T+1}) \theta_{N_T+1} \I_\seq{N_T = 0}]  \\
& \quad + \mathbb{E}\left[((1-F(T-\zeta_1)) f(\zeta_1))^{-1} \I_\seq{N_T=1} \left[ \frac12 (a_S(\bar{Y}_1) - a_S(m_{0})) \mathcal{D}^{(1,1)}_1 P_{T-\zeta_1}h(\bar{X}_1, \bar{Y}_1) \right. \right. \nonumber \\
& \quad \left.\left. - \frac12 (a_S(\bar{Y}_1) - a_S(m_{0})) \mathcal{D}^{(1)}_1 P_{T-\zeta_1}h(\bar{X}_1, \bar{Y}_1)  + \frac12 (a_Y(\bar{Y}_1) - a_Y(m_{0})) \mathcal{D}^{(2,2)}_1 P_{T-\zeta_1}h(\bar{X}_1, \bar{Y}_1) \right. \right. \nonumber  \\
& \quad \left. \left. +  (b_Y(\bar{Y}_1) - b_Y(m_{0})) \mathcal{D}^{(2)}_1 P_{T-\zeta_1}h(\bar{X}_1, \bar{Y}_1) +  \rho ((\sigma_S\sigma_Y)(\bar{Y}_1) - (\sigma_S\sigma_Y)(m_{0})) \mathcal{D}^{(1,2)}_1 P_{T-\zeta_1}h(\bar{X}_1, \bar{Y}_1) \right] \right] \nonumber \\
&  = \mathbb{E}[h(\bar{X}_{N_T+1}, \bar{Y}_{N_T+1}) \theta_{N_T+1} \I_\seq{N_T = 0}]  + \mathbb{E}\Big[((1-F(T-\zeta_1)) f(\zeta_1))^{-1} \I_\seq{N_T=1} \Big[ c^1_S \mathcal{D}^{(1,1)}_1 P_{T-\zeta_1}h(\bar{X}_1, \bar{Y}_1) \nonumber\\
& \quad - c^1_S \mathcal{D}^{(1)}_1 P_{T-\zeta_1}h(\bar{X}_1, \bar{Y}_1)  + c^1_Y \mathcal{D}^{(2,2)}_1 P_{T-\zeta_1} h(\bar{X}_1, \bar{Y}_1) + b^1_Y \mathcal{D}^{(2)}_1 P_{T-\zeta_1}h(\bar{X}_1, \bar{Y}_1) +  c^1_{S,Y} \mathcal{D}^{(1,2)}_1 P_{T-\zeta_1}h(\bar{X}_1, \bar{Y}_1) \Big] \Big]. \label{first:step:expansion:before:ibp}
\end{align}

Next, we apply the IBP formula \eqref{duality:formula} with respect to the random vector $(\bar{X}_1, \bar{Y}_1)$ in the above expression. In order to do that rigorously, one first has to take the conditional expectation $\mathbb{E}_{0,1}[.]$ in the second term of the above equality. We thus obtain
\begin{align*}
\mathbb{E}_{0,1}\Big[& c^1_S \mathcal{D}^{(1,1)}_1 P_{T-\zeta_1} h(\bar{X}_1, \bar{Y}_1)  - c^1_S \mathcal{D}^{(1)}_1 P_{T-\zeta_1} h(\bar{X}_1, \bar{Y}_1)  + c^1_Y \mathcal{D}^{(2,2)}_1 P_{T-\zeta_1} h(\bar{X}_1, \bar{Y}_1) +  b^1_Y \mathcal{D}^{(2)}_1 P_{T-\zeta_1} h(\bar{X}_1, \bar{Y}_1)  \\
& +  c^1_{S,Y} \mathcal{D}^{(1,2)}_1 P_{T-\zeta_1} h(\bar{X}_1, \bar{Y}_1)  \Big]=\mathbb{E}_{0,1}\Big[\Big[\mathcal{I}^{(1,1)}_1(c^1_S) - \mathcal{I}^{(1)}_1(c^1_S) + \mathcal{I}^{(2,2)}_1( c^1_Y) + \mathcal{I}^{(2)}_1( b^1_Y) + \mathcal{I}^{(1,2)}_1(c^1_{S, Y})\Big] P_{T-\zeta_1} h(\bar{X}_1, \bar{Y}_1)\Big].
\end{align*}

From Lemma \ref{estimate:theta:time:loca:density} and the estimate \eqref{conditional:lp:moment:space:M:I:N}, we get
\begin{equation}
\mathbb{E}_{0,1}\Big[\Big|\Big[\mathcal{I}^{(1,1)}_1(c^1_S) - \mathcal{I}^{(1)}_1(c^1_S) + \mathcal{I}^{(2,2)}_1( c^1_Y) + \mathcal{I}^{(2)}_1( b^1_Y) + \mathcal{I}^{(1,2)}_1(c^1_{S, Y})\Big]\Big| \Big| P_{T-\zeta_1} h(\bar{X}_1, \bar{Y}_1)\Big| \Big] \leq C_T |h|_\infty \zeta^{-1/2}_1 \label{first:time:degeneracy:estimate}
\end{equation}

\noindent for some positive constant $C_T$ such that $T\mapsto C_T$ is non-decreasing. The previous estimate yields an integrable time singularity. Indeed, from the previous estimate and \eqref{probabilistic:representation:time:integrals}, one directly gets
\begin{align*}
\mathbb{E}\Big[((1-F(T-\zeta_1)) f(\zeta_1))^{-1}&  \I_\seq{N_T=1} \Big|\mathbb{E}_{0,1}\Big[\Big[\mathcal{I}^{(1,1)}_1(c^1_S) - \mathcal{I}^{(1)}_1(c^1_S) + \mathcal{I}^{(2,2)}_1( c^1_Y) + \mathcal{I}^{(2)}_1( b^1_Y) + \mathcal{I}^{(1,2)}_1(c^1_{S, Y})\Big] P_{T-\zeta_1} h(\bar{X}_1, \bar{Y}_1)\Big]\Big|\Big]\\
& \leq C \mathbb{E}[((1-F(T-\zeta_1)) f(\zeta_1))^{-1}   \zeta^{-1/2}_1 \I_\seq{N_T=1}] \\
& \leq C \int_0^T s^{-1/2}_1 \, ds_1 < \infty.
\end{align*}

Coming back to \eqref{first:step:expansion:before:ibp}, we thus derive
\begin{align}
P_T h(x_0, y_0) &  = \mathbb{E}[h(\bar{X}_{N_T+1}, \bar{Y}_{N_T+1}) \theta_{N_T+1} \I_\seq{N_T = 0}] \nonumber\\
&  + \mathbb{E}\Big[((1-F(T-\zeta_1)) f(\zeta_1))^{-1} \I_\seq{N_T=1} \nonumber\\
& \quad \times \Big[\mathcal{I}^{(1,1)}_1(c^1_S) - \mathcal{I}^{(1)}_1(c^1_S) + \mathcal{I}^{(2,2)}_1( c^1_Y) + \mathcal{I}^{(2)}_1( b^1_Y) +  \mathcal{I}^{(1,2)}_1(c^1_{S, Y})\Big] P_{T-\zeta_1} h(\bar{X}_1, \bar{Y}_1) \Big] \nonumber\\
& = \mathbb{E}[h(\bar{X}_{N_T+1}, \bar{Y}_{N_T+1}) \theta_{N_T+1} \I_\seq{N_T = 0}] + \mathbb{E}\Big[P_{T-\zeta_1}h(\bar{X}_1, \bar{Y}_1) \theta_2 \theta_1 \I_\seq{N_T=1} \Big]. \label{first:order:expansion:with:prob:interpretation}
\end{align}

Our aim now is to iterate the above first order expansion. We prove by induction the following formula: for any positive integer $n$, one has
\begin{align}
P_Th(x_0,y_0)= \sum_{j=0}^{n-1} \E\left[ h(\bar{X}_{N_T+1}, \bar{Y}_{N_T+1})\prod_{i=1}^{N_T+1}  \theta_i  \I_\seq{N_T=j}\right ] +  \E\left[ P_{T-\zeta_{n}}h(\bar{X}_{n}, \bar{Y}_n)\prod_{i=1}^{n+1}  \theta_i \I_\seq{N_T=n}\right ].\label{induction:step:probabilistic:representation}
 \end{align} 
 The case $n=1$ corresponds to \eqref{first:order:expansion:with:prob:interpretation}. We thus assume that \eqref{induction:step:probabilistic:representation} holds at step $n$. We expand the last term appearing in the right-hand side of the previous equality using again \eqref{first:step:expansion:before:prob:interpretation}, by then applying Lemma \ref{lem:add:new:jumps} and by finally performing IBPs as before.
 
 To be more specific, using the notations introduced in Subsection \ref{subsection:choice:approximation:process}, from \eqref{first:step:expansion:before:prob:interpretation} and a change of variable, for any $\zeta\in [0,T]$, one has
\begin{align*}
P_{T-\zeta} h(x, y) & = \mathbb{E}[h(\bar{X}^{\zeta, x}_{T}, \bar{Y}^{\zeta, x}_T)]  \nonumber \\
& + \int_\zeta^T \mathbb{E}\Big[\frac12 (a_S(\bar{Y}^{\zeta, y}_s) - a_S(m_{s-\zeta}(y)) [\partial^2_x P_{T-s}h(\bar{X}^{\zeta, x}_s, \bar{Y}^{\zeta, y}_s) - \partial_x P_{T-s}h(\bar{X}^{\zeta, x}_s, \bar{Y}^{\zeta, y}_s) )]\Big] \, ds \nonumber \\
 & + \int_\zeta^T \mathbb{E}\Big[\frac12 (a_Y(\bar{Y}^{\zeta, y}_s) - a_Y(m_{s-\zeta}(y))) \partial^2_y P_{T-s}h(\bar{X}^{\zeta, x}_s, \bar{Y}^{\zeta, y}_s) + (b_Y(\bar{Y}^{\zeta, y}_s) - b_Y(m_{s-\zeta}(y))) \partial_y P_{T-s}h(\bar{X}^{\zeta, x}_s, \bar{Y}^{\zeta, y }_{s}) \Big] \, ds\nonumber\\
 & + \int_\zeta^T \mathbb{E}\Big[\rho((\sigma_S\sigma_Y)(\bar{Y}^{\zeta, x}_s)-(\sigma_S\sigma_Y)(m_{s-\zeta}(y))) \partial^2_{x, y} P_{T-s}h(\bar{X}^{\zeta, x}_s, \bar{Y}^{\zeta, y}_s)\Big] \, ds. 
\end{align*}

We take $\zeta= \zeta_{n}$, $(x, y) = (\bar{X}_{N_T}, \bar{Y}_{N_T})$ in the previous equality, then multiply it by $\prod_{i=1}^{n+1} \theta_i \I_\seq{N_T=n}$ and finally take expectation. We obtain
\begin{align}
& \mathbb{E}\Big[P_{T-\zeta_n} h(\bar{X}_n, \bar{Y}_n) \prod_{i=1}^{n+1} \theta_i \I_\seq{N_T=n}\Big]\nonumber\\
& =  \mathbb{E}\Big[h(\bar{X}^{\zeta_n, \bar{X}_n}_{T}, \bar{Y}^{\zeta_n, \bar{Y}_n}_{T})  \prod_{i=1}^{n+1}  \theta_i \I_\seq{N_T=n}\Big] \nonumber\\
& + \mathbb{E}\Big[\prod_{i=1}^{n+1}  \theta_i \I_\seq{N_T=n} \int_{\zeta_n}^{T} \frac12 (a_S(\bar{Y}^{\zeta_n, \bar{Y}_n}_s) - a_S(m_{s-\zeta_n}(\bar{Y}_n)) [\partial^2_x P_{T-s}h(\bar{X}^{\zeta_n, \bar{X}_{n}}_s,\bar{Y}^{\zeta_n, \bar{Y}_n}_s) - \partial_x P_{T-s}h(\bar{X}^{\zeta_n, \bar{X}_{n}}_s, \bar{Y}^{\zeta_n, \bar{Y}_n}_s) )]ds \Big] \nonumber \\
& + \mathbb{E}\Big[\prod_{i=1}^{n+1}  \theta_i \I_\seq{N_T=n} \int_{\zeta_n}^{T} \frac12 (a_Y(\bar{Y}^{\zeta_n, \bar{Y}_n}_s) - a_Y(m_{s-\zeta_n}(\bar{Y}_n))) \partial^2_y P_{T-s}h(\bar{X}^{\zeta_n, \bar{X}_n}_s, \bar{Y}^{\zeta_n, \bar{Y}_n}_s) ds\Big] \nonumber  \\
& + \mathbb{E}\Big[\prod_{i=1}^{n+1}  \theta_i \I_\seq{N_T=n} \int_{\zeta_n}^{T}  (b_Y(\bar{Y}^{\zeta_n, \bar{Y}_n}_s) - b_Y(m_{s-\zeta_n}(\bar{Y}_n))) \partial_y P_{T-s}h(\bar{X}^{\zeta_n, \bar{X}_n}_s, \bar{Y}^{\zeta_n, \bar{Y}_n}_s)ds \Big]\nonumber\\
 & + \mathbb{E}\Big[\prod_{i=1}^{n+1}  \theta_i \I_\seq{N_T=n} \int_{\zeta_n}^{T} \rho((\sigma_S\sigma_Y)(\bar{Y}^{\zeta_n, \bar{Y}_n}_s)-(\sigma_S\sigma_Y)(m_{s-\zeta_n}(\bar{Y}_n))) \partial^2_{x,y} P_{T-s}h(\bar{X}^{\zeta_n, \bar{X}_n}_s, \bar{Y}^{\zeta_n, \bar{Y}_n}_s) \, ds\Big]. \label{first:step:induction:hypothesis}
\end{align}

Now, from the very definition of the Markov chain $(\bar{X}_i, \bar{Y}_i)_{0\leq i\leq N_T+1}$ and of the weight sequence $(\theta_i)_{1\leq i \leq N_T+1}$ of Theorem \ref{theorem:probabilistic:formulation}, the first term of the above equality can be written as
\begin{equation}
\mathbb{E}\Big[h(\bar{X}^{\zeta_n, \bar{X}_n}_{T}, \bar{Y}^{\zeta_n, \bar{Y}_n}_{T}) \prod_{i=1}^{n+1} \theta_i \I_\seq{N_T=n}\Big] = \mathbb{E}\Big[h(\bar{X}_{N_T+1}, \bar{Y}_{N_T+1}) \prod_{i=1}^{N_T+1}  \theta_{i} \I_\seq{N_T=n}\Big]. \label{second:expansion:first:term} 
\end{equation}

  We now look at the second, third, fourth and fifth terms. Let us deal with the third and fourth terms. The others are treated in a similar manner and we will omit some technical details. We first take its conditional expectation w.r.t $\left\{ \zeta_1 =t_1, \cdots, \zeta_n = t_n, N_T = n\right\}$ and introduce the measurable function 
 \begin{align*}
& G(t_1, \cdots, t_n, s, T)  :=\mathbb{E}\Big[\prod_{i=1}^{n+1}  \theta_i  \int_{\zeta_{n}}^{T} \Big[ \frac12 (a_Y(\bar{Y}^{\zeta_{n}, \bar{Y}_{n}}_s) - a_Y(m_{s-\zeta_{n}}(\bar{Y}_{n}))) \partial^2_y P_{T-s}h(\bar{X}^{\zeta_{n}, \bar{X}_{n}}_s, \bar{Y}^{\zeta_{n}, \bar{Y}_{n}}_s) \\
& \quad+ (b_Y(\bar{Y}^{\zeta_{n}, \bar{Y}_{n}}_s) - b_Y(m_{s-\zeta_{n}}(\bar{Y}_{n}))) \partial_y P_{T-s}h(\bar{X}^{\zeta_{n}, \bar{X}_{n}}_s, \bar{Y}^{\zeta_{n}, \bar{Y}_{n}}_s) \Big]ds\Big|\zeta_1=t_1, \cdots, \zeta_n = t_n, N_T=n\Big]
 \end{align*}
 
 \noindent which satisfies 
 \begin{align*}
 |G(t_1, \cdots, t_n, s, T)| & \leq C \E\Big[\prod_{i=1}^{n+1} |\theta_i| \Big(1+ |W_{s}-W_{\zeta_{n}}| + | \widetilde{W}_{s}-\widetilde{W}_{\zeta_n}|\Big) \Big| \zeta_1=t_1,\cdots, \zeta_n=t_n, N_T=n\Big] \\
 & \leq C \mathbb{E}\Big[\prod_{i=1}^{n+1} |\theta_i| | \zeta_1=t_1,\cdots, \zeta_n=t_n, N_T=n \Big]
 \end{align*}
 \noindent where we used the boundedness of $a_Y$, the Lipschitz regularity of $b_Y$, the inequalities $\sup_{0\leq t \leq T} |\partial^{\ell}_y P_t h|_\infty \leq C$ for $\ell=1,\,2$ and, for the last inequality the fact that, conditionally on $\left\{ \zeta_1 =t_1, \cdots, \zeta_n = t_n, N_T = n\right\}$, the random variables $(W_{s}-W_{\zeta_{n}},  \widetilde{W}_{s}-\widetilde{W}_{\zeta_n})$ are independent of the sigma-field $\mathcal{G}_n$. Recall now that $\P(N_T=n, \zeta_1\in dt_1, \cdots, \zeta_n \in dt_n) = (1- F(T-t_{n})) \prod_{j=0}^{n-1} f(t_{j+1}-t_j) dt_1, \cdots, dt_n$ on the set $\Delta_n(T)$ so that from Lemma \ref{estimate:theta:time:loca:density} and the estimate \eqref{conditional:lp:moment:space:M:I:N}, we obtain 
 $$
 \E\Big[\int_{\zeta_n}^T|G(\zeta_1, \cdots, \zeta_n, s, T)| \I_\seq{N_T=n} ds\Big] \leq C \int_{t_n}^T \int_{\Delta_n(T)}  \prod_{i=1}^{n} (t_i-t_{i-1})^{-1/2} dt_1\cdots dt_n dt_{n+1} < \infty.
 $$
  Hence, by Lemma \ref{lem:add:new:jumps}, it holds
 \begin{align*}
 & \mathbb{E}\Big[\I_\seq{N_T=n}\prod_{i=1}^{n+1}  \theta_i  \int_{\zeta_{n}}^{T} \Big[ \frac12 (a_Y(\bar{Y}^{\zeta_{n}, \bar{Y}_{n}}_s) - a_Y(m_{s-\zeta_{n}}(\bar{Y}_{n}))) \partial^2_y P_{T-s}h(\bar{X}^{\zeta_{n}, \bar{X}_{n}}_s, \bar{Y}^{\zeta_{n}, \bar{Y}_{n}}_s) \\
& \quad+ (b_Y(\bar{Y}^{\zeta_{n}, \bar{Y}_{n}}_s) - b_Y(m_{s-\zeta_{n}}(\bar{Y}_{n}))) \partial_y P_{T-s}h(\bar{X}^{\zeta_{n}, \bar{X}_{n}}_s, \bar{Y}^{\zeta_{n}, \bar{Y}_{n}}_s) \Big]ds \Big]\\
& = \mathbb{E}\Big[ \I_\seq{N_T=n}\int_{\zeta_n}^T G(\zeta_1,\cdots, \zeta_n, s, T) \, ds \Big]\\
& = \mathbb{E}\Big[\prod_{i=1}^{n} \theta_i  (1-F(T-\zeta_{n+1}))^{-1} (f(\zeta_{n+1}-\zeta_{n}))^{-1}  \Big[\frac12 (a_Y(\bar{Y}_{n+1}) - a_Y(m_{n})) \mathcal{D}^{(2,2)}_{n+1} P_{T-\zeta_{n+1}}h(\bar{X}_{n+1}, \bar{Y}_{n+1}) \\
& \quad+ (b_Y(\bar{Y}_{n+1}) - b_Y(m_{n})) \mathcal{D}^{(2)}_{n+1} P_{T-s}h(\bar{X}_{n+1}, \bar{Y}_{n+1})\Big] \I_\seq{N_T=n+1}\Big].
 \end{align*}
 
 Finally, we take the conditional expectation $\mathbb{E}_{n, n+1}[.]$ inside the above expectation and then employ the IBP formula \eqref{duality:formula}, two times w.r.t. the diffusion coefficient and one time w.r.t the drift coefficient as done before. We obtain
 \begin{align*}
 \mathbb{E}\Big[& \prod_{i=1}^{n} \theta_i (1-F(T-\zeta_{n+1}))^{-1} (f(\zeta_{n+1}-\zeta_{n}))^{-1}  \Big[\frac12 (a_Y(\bar{Y}_{n+1}) - a_Y(m_{n})) \mathcal{D}^{(2,2)}_{n+1} P_{T-\zeta_{n+1}}h(\bar{X}_{n+1}, \bar{Y}_{n+1}) \\
& \quad+ (b_Y(\bar{Y}_{n+1}) - b_Y(m_{n})) \mathcal{D}^{(2)}_{n+1} P_{T-\zeta_{n+1}}h(\bar{X}_{n+1}, \bar{Y}_{n+1})\Big] \I_\seq{N_T=n+1}\Big] \\
& = \mathbb{E}\Big[ \prod_{i=1}^{n} \theta_i  (1-F(T-\zeta_{n+1}))^{-1}  (f(\zeta_{n+1}-\zeta_{n}))^{-1} [\mathcal{I}^{(2,2)}_{n+1}(c^{n+1}_Y) +\mathcal{I}^{(2)}_{n+1}(b^{n+1}_Y) ] P_{T-\zeta_{n+1}}h(\bar{X}_{n+1}, \bar{Y}_{n+1})  \I_\seq{N_T=n+1}\Big].
 \end{align*}
 
 In a completely analogous manner, we derive
 \begin{align*}
 \mathbb{E}\Big[& \prod_{i=1}^{n+1}  \theta_i \I_\seq{N_T=n} \int_{\zeta_n}^{T} \frac12 (a_S(\bar{Y}^{\zeta_n, \bar{Y}_n}_s) - a_S(m_{s-\zeta_n}(\bar{Y}_n)) [\partial^2_x P_{T-s}h(\bar{X}^{\zeta_n, \bar{X}_{n}}_s,\bar{Y}^{\zeta_n, \bar{Y}_n}_s) - \partial_x P_{T-s}h(\bar{X}^{\zeta_n, \bar{X}_{n}}_s, \bar{Y}^{\zeta_n, \bar{Y}_n}_s) )]ds \Big] \\
 & = \mathbb{E}\Big[ \prod_{i=1}^{n} \theta_i  (1-F(T-\zeta_{n+1}))^{-1}  (f(\zeta_{n+1}-\zeta_{n}))^{-1} [\mathcal{I}^{(1,1)}_{n+1}(c^{n+1}_S) - \mathcal{I}^{(1)}_{n+1}(c^{n+1}_S) ] P_{T-\zeta_{n+1}}h(\bar{X}_{n+1}, \bar{Y}_{n+1})\I_\seq{N_T=n+1} \Big]
 \end{align*}
 \noindent and
 \begin{align*}
 \mathbb{E}\Big[& \prod_{i=1}^{n+1}  \theta_i \I_\seq{N_T=n} \int_{\zeta_n}^{T} \rho((\sigma_S\sigma_Y)(\bar{Y}^{\zeta_n, \bar{Y}_n}_s)-(\sigma_S\sigma_Y)(m_{s-\zeta_n}(\bar{Y}_n))) \partial^2_{x,y} P_{T-s}h(\bar{X}^{\zeta_n, \bar{X}_n}_s, \bar{Y}^{\zeta_n, \bar{Y}_n}_s) \, ds\Big] \\
 & = \mathbb{E}\Big[ \prod_{i=1}^{n} \theta_i (1-F(T-\zeta_{n+1}))^{-1}  (f(\zeta_{n+1}-\zeta_{n}))^{-1} \mathcal{I}^{(1,2)}_{n+1}(c^{n+1}_{Y,S})P_{T-\zeta_{n+1}}h(\bar{X}_{n+1}, \bar{Y}_{n+1})  \I_\seq{N_T=n+1} \Big].
 \end{align*}
 
 Summing the three previous identities, we obtain that the sum of the second, third, fourth and fifth term in the right-hand side of \eqref{first:step:induction:hypothesis} is equal to
 \begin{align*}
 \mathbb{E}\Big[ & \prod_{i=1}^{n} \theta_i  (1-F(T-\zeta_{n+1}))^{-1}  (f(\zeta_{n+1}-\zeta_{n}))^{-1} \\
 & \quad \times \Big[ \mathcal{I}^{(1,1)}_{i+1}(a^{n+1}_S) -  \mathcal{I}^{(1)}_{n+1}(a^{n+1}_S)  + \mathcal{I}^{(2,2)}_{n+1}(a^{n+1}_Y) +  \mathcal{I}^{(2)}_{n+1}(b^{n+1}_Y) +\mathcal{I}^{(1,2)}_{n+1}(a^{n+1}_{Y, S}) \Big]  P_{T-\zeta_{n+1}}h(\bar{X}_{n+1}, \bar{Y}_{n+1}) \I_\seq{N_T=n+1}  \Big]\\
 & =  \mathbb{E}\Big[ \prod_{i=1}^{n+2} \theta_i P_{T-\zeta_{n+1}}h(\bar{X}_{n+1}, \bar{Y}_{n+1}) \I_\seq{N_T=n+1}  \Big]
 \end{align*}
 \noindent where we used the very definitions \eqref{definition:first:theta:theorem:probabilistic:representation} and \eqref{definition:theta:theorem:probabilistic:representation} of the weights $(\theta_{i})_{1\leq i \leq N_T+1}$ on the set $\left\{ N_T = n+1\right\}$. This concludes the proof of \eqref{induction:step:probabilistic:representation} at step $n+1$. 
 
 To conclude it remains to prove the absolute convergence of the first sum and the convergence to zero of the last term in \eqref{induction:step:probabilistic:representation}. These two results follow directly from the boundedness of $h$ and the general estimates on the product of weights established in Lemma \ref{estimate:theta:time:loca:density}.
 
Indeed, from Lemma \ref{estimate:theta:time:loca:density}, the estimate \eqref{conditional:lp:moment:space:M:I:N}, the tower property of conditional expectation and the identity \eqref{probabilistic:representation:time:integrals}, we obtain
\begin{align*}
\E\Big[  \left|h(\bar{X}_{N_T+1}, \bar{Y}_{N_T+1})\right| \prod_{i=1}^{N_T+1}  \left|\theta_i\right|  \I_\seq{N_T=j} \Big]\Big| & \leq  C^j |h|_\infty \mathbb{E}\Big[ (1-F(T-\zeta_{j}))^{-1}\prod_{i=1}^{j} (f(\zeta_{i}-\zeta_{i-1}))^{-1} (\zeta_{i}-\zeta_{i-1})^{-\frac12}  \I_\seq{N_T=j}  \Big] \\
 & = C^j |h|_{\infty} \int_{\Delta_j(T)} \prod_{i=1}^{j}(s_{i}-s_{i-1})^{-\frac12} \, d\s_j \\
 & = C^{j} |h|_{\infty} T^{\frac{j}{2}} \frac{\Gamma^{j}(1/2)}{\Gamma(1+j/2)}
\end{align*}
\noindent which in turn yields
$$
\sum_{j=0}^{n-1} \E\Big[ \left|h(\bar{X}_{N_T+1}, \bar{Y}_{N_T+1})\right| \prod_{i=1}^{N_T+1}  \left| \theta_i \right|  \I_\seq{N_T=j}\Big] \Big| \leq |h|_\infty \sum_{n\geq0} \frac{(CT^{1/2})^{n}}{\Gamma(1+n/2)} = |h|_\infty E_{1/2, 1}(C T^{1/2})
$$

\noindent so that the series converge absolutely. Similarly, 
$$
\Big|\E\Big[ P_{T-\zeta_{n}}h(\bar{X}_{n}, \bar{Y}_n)\prod_{i=1}^{n+1}  \theta_i \I_\seq{N_T=n}\Big] \Big| \leq C^{n} |h|_\infty  T^{\frac{n}{2}} \frac{\Gamma^{n}(1/2)}{\Gamma(1+n/2)}
$$
\noindent so that the remainder indeed vanishes as $n$ goes to infinity. We thus conclude
\begin{equation}
P_Th(x_0,y_0)= \sum_{n\geq0} \E\Big[ h(\bar{X}_{N_T+1}, \bar{Y}_{N_T+1})\prod_{i=1}^{N_T+1}  \theta_i  \I_\seq{N_T=n}\Big] = \mathbb{E}\Big[ h(\bar{X}_{N_T+1}, \bar{Y}_{N_T+1})\prod_{i=1}^{N_T+1}  \theta_i\Big] \label{semigroup:representation:before:step:2}
\end{equation}

\noindent for any $h\in \mathcal{C}^2_b(\mathbb{R}^2)$. We eventually extend the above representation formula to any bounded and continuous function $h$ using a standard approximation argument. The remaining technical details are omitted.
 
 \bigskip
 
\emph{Step 2: Extension to measurable map $h$ satisfying the growth assumption \eqref{growth:assumption:test:function}}\\

We first extend the previous result to any bounded and measurable $h$. This follows from a monotone class argument that we now detail.
 
 Let us first consider $h \in \mathcal{C}_b(\mathbb{R}^2)$. From Fubini's theorem, it holds
\begin{align*}
\mathbb{E}\Big[ h(\bar{X}_{n+1}, \bar{Y}_{n+1})& \prod_{i=1}^{n+1} \theta_{i} \Big| N_T=n, \zeta^{n} \Big] = \int_{\mathbb{R}^2} h(x, y) \, \mathbb{E}\Big[\bar{p}(T-\zeta_n, \bar{X}_{n}, \bar{Y}_{n}, x, y)  \prod_{i=1}^{n+1} \theta_{i}(\bar{X}_{{i-1}}, \bar{Y}_{{i-1}}, \bar{X}_{{i}}, \bar{Y}_{{i}}, \zeta^{n})  \Big| N_T=n, \zeta^{n} \Big] \, dx dy
\end{align*}

\noindent which can be justified as follows. From Lemma \ref{estimate:theta:time:loca:density}, Lemma \ref{lemma:semigroup:property} and the upper-bound estimate \eqref{upper:bound:transition:density:approximation:semigroup}, it holds
\begin{align*}
\Big| \mathbb{E}\Big[& \bar{p}(T-\zeta_n, \bar{X}_{n}, \bar{Y}_{n}, x, y)   \prod_{i=1}^{n+1} \theta_{i}(\bar{X}_{{i-1}}, \bar{Y}_{{i-1}}, \bar{X}_{{i}}, \bar{Y}_{{i}}, \zeta^{n}) \Big| N_T=n, \zeta^{n} \Big] \Big|\\
& \leq \int_{(\mathbb{R}^2)^{n}} \bar{p}(T-\zeta_n, x_n, y_n, x, y) \prod_{i=1}^{n}| \theta_{i}(x_{i-1}, y_{i-1}, x_i, y_i, \zeta^{n})|\bar{p}(\zeta_i-\zeta_{i-1}, x_{i-1},y_{i-1}, x_i, y_i) \,  d\x_n d\y_n \\
& \leq (C_T)^n \int_{(\mathbb{R}^2)^{n}}  \bar{q}_{4\kappa}(T-\zeta_n, x_n, y_n, x, y) \prod_{i=1}^{n} (f(\zeta_i -\zeta_{i-1}))^{-1}(\zeta_i -\zeta_{i-1})^{-\frac12} \bar{q}_{4\kappa}(\zeta_{i}-\zeta_{i-1}, x_{i-1}, y_{i-1}, x_i, y_i) \,   d\x_n d\y_n \\
&\leq  (C_T)^n  \prod_{i=1}^{n} (f(\zeta_i -\zeta_{i-1}))^{-1}(\zeta_i -\zeta_{i-1})^{-\frac12} \bar{q}_{c}(T, x_0, y_0, x, y)
\end{align*}

\noindent for some $c:=c(T, b_Y, \kappa)\geq 4\kappa$. Hence, from \eqref{semigroup:representation:before:step:2} and again Fubini's theorem, justified by the previous estimate and the fact that $\mathbb{E}\Big[ (C_T)^{N_T} \prod_{i=1}^{N_T} (f(\zeta_i -\zeta_{i-1}))^{-1}(\zeta_i - \zeta_{i-1})^{-\frac12} \Big] < \infty$, one has\
\begin{align}
P_T h(x_0, y_0) & = \int_{\mathbb{R}^2} h(x, y)  \mathbb{E}\Big[\bar{p}(T-\zeta_{N_T}, \bar{X}_{N_T}, \bar{Y}_{N_T}, x, y)  \prod_{i=1}^{N_T+1} \theta_i\Big] \, dx dy \label{semigroup:write:as:integral:density}
\end{align}

\noindent for any $h \in \mathcal{C}_b(\mathbb{R}^2)$. Moreover, from the previous computations, the following upper-bound holds
\begin{align}
 \Big| & \mathbb{E}\Big[\bar{p}(T-\zeta_{N_T}, \bar{X}_{N_T}, \bar{Y}_{N_T}, x, y)  \prod_{i=1}^{N_T+1} \theta_{i}\Big] \Big|\nonumber \\
 & = \Big| \sum_{n\geq0} \int_{\Delta_n(T)} \mathbb{E}\Big[\bar{p}(T-s_{n}, \bar{X}_{n}, \bar{Y}_{n}, x, y)  \prod_{i=1}^{n+1} \theta_{i} \, \Big| N_T=n, \zeta^n=(s_1, \cdots, s_n)\Big] \, (1-F(T-s_n)) \prod_{i=1}^{n} f(s_{i}-s_{i-1})\, d\s_n \Big| \nonumber\\
 & \leq \Big(\sum_{n\geq0}  \int_{\Delta_n(T)} (C_T)^n \prod_{i=1}^{n} (s_i -s_{i-1})^{-\frac12} \, d\s_n\Big)   \bar{q}_{c}(T, x_0, y_0, x, y) \nonumber \\
 & = E_{1/2, 1}(CT^{1/2}) \bar{q}_{c}(T, x_0, y_0, x, y). \label{upper:bound:density:qc}
\end{align}

 It now follows from \eqref{semigroup:write:as:integral:density} and a monotone class argument that the probabilistic representation formula \eqref{probabilistic:formulation:theorem} is valid for any real-valued bounded and measurable map $h$ defined over $\mathbb{R}^2$. The extension to any measurable map $h$ satisfying the growth assumption: $|h(x, y)|\leq C \exp(\gamma (|x|^2 + |y|^2))$ for any $0\leq \gamma < (2\kappa)^{-1}$, follows from the integral representation \eqref{semigroup:write:as:integral:density}, the upper-bound \eqref{upper:bound:density:qc} combined with a standard approximation argument. Remaining technical details are omitted. 

\bigskip

\emph{Step 3: Finite $L^p(\mathbb{P})$-moment for the probabilistic representation}\\

If $N$ is a renewal process with $Beta(\alpha,1)$ jump times then $f(s_{i}-s_{i-1}) = \frac{1-\alpha}{\bar{\tau}^{1-\alpha}} \frac{1}{(s_i-s_{i-1})^{\alpha}} \textbf{1}_{[0, \bar{\tau}]}(s_i-s_{i-1})$ and $1-F(T-s_n) = 1-\Big(\frac{T-s_n}{\bar{\tau}}\Big)^{1-\alpha} \geq 1- (\frac{T}{\bar{\tau}})^{1-\alpha}$, similarly to step 2, by Fubini's theorem, we get
\begin{align*}
\mathbb{E}\Big[ |h(\bar{X}_{n+1}, \bar{Y}_{n+1})|^p& \prod_{i=1}^{n+1} |\theta_{i}|^p \Big| N_T=n, \zeta^{n} \Big] \\
& = \int_{\mathbb{R}^2} |h(x, y)|^p \, \mathbb{E}\Big[\bar{p}(T-\zeta_n, \bar{X}_{n}, \bar{Y}_{n}, x, y)  \prod_{i=1}^{n+1} |\theta_{i}(\bar{X}_{{i-1}}, \bar{Y}_{{i-1}}, \bar{X}_{{i}}, \bar{Y}_{{i}}, \zeta^{n})|^p  \Big| N_T=n, \zeta^{n} \Big] \, dx dy.
\end{align*}
 The above formula is justified by Lemma \ref{estimate:theta:time:loca:density} and Lemma \ref{lemma:semigroup:property} which yield
 \begin{align*}
 \mathbb{E}\Big[& \bar{p}(T-\zeta_n, \bar{X}_{n}, \bar{Y}_{n}, x, y)   \prod_{i=1}^{n+1} |\theta_{i}(\bar{X}_{{i-1}}, \bar{Y}_{{i-1}}, \bar{X}_{{i}}, \bar{Y}_{{i}}, \zeta^{n})|^p \Big| N_T=n, \zeta^{n} \Big] \\
& \leq \int_{(\mathbb{R}^2)^{n}} \bar{p}(T-\zeta_n, x_n, y_n, x, y) \prod_{i=1}^{n}| \theta_{i}(x_{i-1}, y_{i-1}, x_i, y_i, \zeta^{n})|^p \bar{p}(\zeta_i-\zeta_{i-1}, x_{i-1},y_{i-1}, x_i, y_i) \,  d\x_n d\y_n \\
& \leq C^n \int_{(\mathbb{R}^2)^{n}}  \bar{q}_{4\kappa}(T-\zeta_n, x_n, y_n, x, y) \prod_{i=1}^{n} (f(\zeta_i -\zeta_{i-1}))^{-p}(\zeta_i -\zeta_{i-1})^{-\frac{p}{2}} \bar{q}_{4\kappa}(\zeta_{i}-\zeta_{i-1}, x_{i-1}, y_{i-1}, x_i, y_i) \,   d\x_n d\y_n \\
&\leq  C^n  \prod_{i=1}^{n} (\zeta_i -\zeta_{i-1})^{\alpha p}(\zeta_i -\zeta_{i-1})^{-\frac{p}{2}} \bar{q}_{c}(T, x_0, y_0, x, y)
\end{align*}

\noindent for some $c:=c(T, b_Y, \kappa)\geq 4\kappa$. Now, using the fact that $\mathbb{E}\Big[C^n \prod_{i=1}^{N_T} (\zeta_i -\zeta_{i-1})^{\alpha p } (\zeta_i - \zeta_{i-1})^{-\frac{p}{2}} \Big] < \infty$ as soon as $p(\frac{1}{2}-\alpha) < 1-\alpha $ and that $h\in \mathcal{B}_\gamma(\mathbb{R}^2)$, from the previous computation, we obtain
\begin{align*}
 & \mathbb{E}[|h(\bar{X}_{N_T+1}, \bar{Y}_{N_T+1})|^p   \prod_{i=1}^{N_T+1} |\theta_{i}|^p] \\
 & = \mathbb{E}\Big[\mathbb{E}\Big[ |h(\bar{X}_{n+1}, \bar{Y}_{n+1})|^p \prod_{i=1}^{n+1} |\theta_{i}|^p \Big| N_T, \zeta^{n} \Big]\Big]\\ 
 & \leq \mathbb{E}\Big[C^n \prod_{i=1}^{N_T} (\zeta_i -\zeta_{i-1})^{\alpha p } (\zeta_i - \zeta_{i-1})^{-\frac{p}{2}} \Big] \int_{\mathbb{R}^2} e^{\gamma p (|x|^2 + |y|^2)} \bar{q}_{c}(T, x_0, y_0, x, y)\, dx dy.
\end{align*}

 To conclude the proof, it suffices to note that the above space integral is finite as soon as $0\leq \gamma p < c^{-1}$.


\subsection{Proof of Lemma \ref{lem:transfer:derivative}}\label{proof:lem:transfer:derivative} 

Since $h\in \mathcal{C}^{1}_{p}(\mathbb{R}^2)$ and $\E_{i,n}\Big[|\bar{X}_{i+1}|^q + |\bar{Y}_{i+1}|^q\Big]< \infty$ a.s., for any $q\geq1$, under \A{AR}, one may differentiate under the (conditional) expectation and deduce that $(x, y) \mapsto \mathbb{E}_{i, n}\Big[h(\bar{X}_{i+1}, \bar{Y}_{i+1}) \theta_{i+1} | (\bar{X}_i, \bar{Y}_i)=(x, y)\Big] \in \mathcal{C}^1_p(\mathbb{R}^2)$ for any $i \in \left\{0, \cdots, n\right\}$. The rest of the proof is divided into three parts.

\bigskip

\emph{Step 1: proofs of \eqref{transfer:derivative:x:first:interval} and \eqref{transfer:derivative:x:next:interval}} \\

The transfer of derivatives formulae \eqref{transfer:derivative:x:first:interval} and \eqref{transfer:derivative:x:next:interval} are easily obtained by differentiating under expectation (which is allowed by the polynomial growth at infinity of $h$) noting from the definition of the Markov chain $\bar{X}$ that $\partial_{s_0} \bar{X}_1 = \partial_{s_0} \ln(s_0) = \frac{1}{s_0}$ and $\partial_{\bar{X}_i} h(\bar{X}_{i+1}, \bar{Y}_{i+1}) = \partial_{\bar{X}_{i+1}} h(\bar{X}_{i+1}, \bar{Y}_{i+1}) \partial_{\bar{X}_i} \bar{X}_{i+1} = \partial_{\bar{X}_{i+1}} h(\bar{X}_{i+1}, \bar{Y}_{i+1})$. Observe as well that from \eqref{chain:rule:formula:integral:operators:first:coordinate:deriv}, the fact that $\partial_{\bar{X}_i} c^{i+1}_S = \partial_{\bar{X}_i} c^{i+1}_Y = \partial_{\bar{X}_i} b^{i+1}_Y  = \partial_{\bar{X}_i} c^{i+1}_{Y, S} = \partial_{\bar{X}_i}  \mathcal{I}_{i+1}^{(1)}(1) = \partial_{\bar{X}_i}  \mathcal{I}_{i+1}^{(2)}(1)  =0 $ and the very definition of the random variables $(\theta_{i})_{1\leq i \leq n+1}$, one has $\partial_{\bar{X}_i} \theta_{i+1} = 0$. This gives the identities \eqref{transfer:derivative:x:first:interval} and \eqref{transfer:derivative:x:next:interval}.

\bigskip

\emph{Step 2: proofs of \eqref{transfer:derivative:y:next:interval} and \eqref{transfer:derivative:y:last:interval}} \\

The proofs of \eqref{transfer:derivative:y:next:interval} and \eqref{transfer:derivative:y:last:interval} are more involved. Let us prove \eqref{transfer:derivative:y:next:interval}. We proceed by considering the difference between the term appearing on the left-hand side and the first two terms appearing on the right-hand side of \eqref{transfer:derivative:y:next:interval}. On the one hand, using the IBP formula \eqref{duality:formula} and \eqref{partial:flow:deriv:Y:bar:process}, we get
\begin{align*}
\partial_{\bar{Y}_i} \mathbb{E}_{i,n}\Big[h(\bar{X}_{i+1},\bar{Y}_{i+1}) \theta_{i+1}\Big] & = \mathbb{E}_{i,n}\Big[ \partial_{\bar{X}_{i+1}}h(\bar{X}_{i+1}, \bar{Y}_{i+1}) \partial_{\bar{Y}_i} \bar{X}_{i+1} \theta_{i+1}\Big]  + \mathbb{E}_{i, n}\Big[ \partial_{\bar{Y}_{i+1}}h(\bar{X}_{i+1}, \bar{Y}_{i+1}) \partial_{\bar{Y}_i}\bar{Y}_{i+1} \theta_{i+1} \Big] \\
& \quad + \mathbb{E}_{i, n}\Big[h(\bar{X}_{i+1}, \bar{Y}_{i+1}) \partial_{\bar{Y}_{i}}\theta_{i+1} \Big] \\
& = \mathbb{E}_{i,n}\Big[ h(\bar{X}_{i+1}, \bar{Y}_{i+1}) \Big[\mathcal{I}^{(1)}_{i+1}(\partial_{\bar{Y}_i}\bar{X}_{i+1} \theta_{i+1}) + \mathcal{I}^{(2)}_{i+1}(m'_i \theta_{i+1}) \Big] \Big] \\
& \quad  + \mathbb{E}_{i,n}\Big[h(\bar{X}_{i+1}, \bar{Y}_{i+1}) \mathcal{I}^{(2)}_{i+1}\Big(\sigma'_{Y,i} \Big(\rho_i Z^{1}_{i+1} + \sqrt{1-\rho_i^2} Z^2_{i+1}\Big)  \theta_{i+1}\Big) \Big] \\
& \quad + \mathbb{E}_{i,n}\Big[h(\bar{X}_{i+1}, \bar{Y}_{i+1}) \mathcal{I}^{(2)}_{i+1}\Big(\sigma_{Y,i}\frac{\rho'_i}{1-\rho_i^2} \Big(\sqrt{1-\rho^2_i} Z^{1}_{i+1} -\rho_i Z^2_{i+1}\Big)  \theta_{i+1}\Big) \Big] \\
& \quad +  \mathbb{E}_{i, n}\Big[h(\bar{X}_{i+1}, \bar{Y}_{i+1}) \partial_{\bar{Y}_{i}}\theta_{i+1} \Big].  
\end{align*}

On the other hand, again from the IBP formula \eqref{duality:formula}, we obtain
\begin{align*}
\mathbb{E}_{i, n}\Big[ \partial_{\bar{X}_{i+1}}h(\bar{X}_{i+1}, \bar{Y}_{i+1}) \rtheta^{e,X}_{i+1} \Big] &+ \mathbb{E}_{i, n}\Big[ \partial_{\bar{Y}_{i+1}}h(\bar{X}_{i+1}, \bar{Y}_{i+1}) \rtheta^{e,Y}_{i+1} \Big] + \mathbb{E}_{i, n}\Big[h(\bar{X}_{i+1}, \bar{Y}_{i+1}) \rtheta^{c}_{i+1} \Big] \\
& = \mathbb{E}_{i, n}\Big[h(\bar{X}_{i+1}, \bar{Y}_{i+1}) \Big[\mathcal{I}^{(1)}_{i+1}(\rtheta^{e,X}_{i+1}) + \mathcal{I}^{(2)}_{i+1}(\rtheta^{e,Y}_{i+1})+ \rtheta^{c}_{i+1}\Big] \Big].
\end{align*}

Combining the two previous identities, we see that the difference 
$$
\partial_{\bar{Y}_i} \mathbb{E}_{i,n}\Big[h(\bar{X}_{i+1},\bar{Y}_{i+1}) \theta_{i+1}\Big] - \Big(\mathbb{E}_{i, n}\Big[ \partial_{\bar{X}_{i+1}}h(\bar{X}_{i+1}, \bar{Y}_{i+1}) \rtheta^{e,X}_{i+1} \Big]  +  \mathbb{E}_{i, n}\Big[ \partial_{\bar{Y}_{i+1}}h(\bar{X}_{i+1}, \bar{Y}_{i+1}) \rtheta^{e,Y}_{i+1} \Big] +\mathbb{E}_{i, n}\Big[h(\bar{X}_{i+1}, \bar{Y}_{i+1}) \rtheta^{c}_{i+1} \Big]  \Big)
$$
can be written as
\begin{align}
&\mathbb{E}\Big[ h(\bar{X}_{i+1}, \bar{Y}_{i+1}) \mathcal{I}^{(2)}_{i+1}\Big(m'_i \theta_{i+1} - \rtheta^{e, Y}_{i+1}\Big) \Big] +  \mathbb{E}\Big[h(\bar{X}_{i+1}, \bar{Y}_{i+1})\partial_{\bar{Y}_i} \theta_{i+1}\Big] - \mathbb{E}_{i,n}\Big[ h(\bar{X}_{i+1}, \bar{Y}_{i+1}) \mathcal{I}^{(1)}_{i+1}( \rtheta^{e,X}_{i+1})\Big] \nonumber \\
& +  \mathbb{E}_{i,n}\Big[ h(\bar{X}_{i+1}, \bar{Y}_{i+1}) \mathcal{I}^{(1)}_{i+1}(\partial_{\bar{Y}_i}\bar{X}_{i+1} \theta_{i+1})\Big]  + \mathbb{E}\Big[h(\bar{X}_{i+1}, \bar{Y}_{i+1})\mathcal{I}^{(2)}_{i+1}\Big(\sigma'_{Y,i} \Big(\rho_i Z^{1}_{i+1} + \sqrt{1-\rho_i^2} Z^2_{i+1}\Big)  \theta_{i+1}\Big) \Big] \nonumber \\
& + \mathbb{E}\Big[h(\bar{X}_{i+1}, \bar{Y}_{i+1})\mathcal{I}^{(2)}_{i+1}\Big(\sigma_{Y,i} \frac{\rho'_i}{\sqrt{1-\rho_i^2}} \Big(\sqrt{1-\rho^2_i} Z^{1}_{i+1} - \rho_i Z^2_{i+1}\Big)  \theta_{i+1}\Big) \Big] \label{difference:terms:after:ibp}\\
&-  \mathbb{E}\Big[h(\bar{X}_{i+1}, \bar{Y}_{i+1}) \rtheta^{c}_{i+1}\Big].\nonumber
\end{align}

Before proceeding, we provide the explicit expression for the quantity $\partial_{\bar{Y}_i} \theta_{i+1}$. Using the chain rule formula of Lemma \ref{lem:chain:rule:formula}, after some standard but cumbersome computations, we obtain

\begin{align*}
	\partial_{\bar{Y}_i} \theta_{i+1} & = (f(\zeta_{i+1}-\zeta_i))^{-1}\Big[   \mathcal{I}^{(1,1)}_{i+1}(\partial_{\bar{Y}_i}c^{i+1}_S)  -   \mathcal{I}^{(1)}_{i+1}(\partial_{\bar{Y}_i}c^{i+1}_S) + \mathcal{I}^{(2,2)}_{i+1}(\partial_{\bar{Y}_i}c^{i+1}_Y) +  \mathcal{I}^{(2)}_{i+1}(\partial_{\bar{Y}_i}b^{i+1}_Y) +  \mathcal{I}^{(1,2)}_{i+1}(\partial_{\bar{Y}_i}c^{i+1}_{Y, S}) \\
	& -  \Big(  \frac{\sigma'_{S,i}}{\sigma_{S,i}} \Big( 2\mathcal{I}^{(1,1)}_{i+1}(c^{i+1}_S)  - \mathcal{I}^{(1)}_{i+1}(c^{i+1}_S) +  \mathcal{I}^{(1,2)}_{i+1}(c^{i+1}_{Y, S})  \Big)  +  \Big(\frac{\sigma'_{Y,i}}{\sigma_{Y,i}} - \frac{\rho'_i \rho_i}{1-\rho_i^2}\Big) \Big(2 \mathcal{I}^{(2,2)}_{i+1}(c^{i+1}_Y)   +  \mathcal{I}^{(2)}_{i+1}(b^{i+1}_Y) + \mathcal{I}^{(1,2)}_{i+1}(c^{i+1}_{Y, S})  \Big) \Big) \\
	&  + \frac{\rho'_i}{1-\rho_i^2} \frac{\sigma_{Y,i}}{\sigma_{S,i}}\Big( \mathcal{I}^{(1,2)}_{i+1}(c^{i+1}_{S}) + \mathcal{I}^{(2,1)}_{i+1}(c^{i+1}_{S}) - \mathcal{I}^{(2)}_{i+1}(c^{i+1}_{S}) \Big) \Big].
\end{align*}

Also, after some simple algebraic simplifications using the definitions of $\rtheta^{e,Y}_{i+1}$ and $\rtheta^{e,X}_{i+1}$ in \eqref{transfer:derivative:y:next:interval}, one obtains
$$
\mathcal{I}^{(2)}_{i+1}\Big(m'_i \theta_{i+1} - \rtheta^{e, Y}_{i+1}\Big) = -(f(\zeta_{i+1}-\zeta_i))^{-1}  \Big[ \mathcal{I}^{(2,2)}_{i+1}\Big(\partial_{\bar{Y}_i} c^{i+1}_Y\Big) + \mathcal{I}^{(1,2)}_{i+1}\Big(\partial_{\bar{Y}_i} c^{i+1}_{Y,S}\Big) \Big]
$$ 
\noindent and
$$
\mathcal{I}^{(1)}_{i+1}( \rtheta^{e,X}_{i+1}) = (f(\zeta_{i+1}-\zeta_i))^{-1} \mathcal{I}^{(1,1)}_{i+1}\Big(\partial_{\bar{Y}_i}c^{i+1}_S \Big).
$$

Combining the three previous identities and gathering similar terms, we obtain

\begin{align}\label{one:part:theta:c}
&\qquad\qquad 	\mathcal{I}^{(2)}_{i+1}\Big(m'_i \theta_{i+1} - \rtheta^{e, Y}_{i+1}\Big) + \partial_{\bar{Y}_i} \theta_{i+1}   - \mathcal{I}^{(1)}_{i+1}( \rtheta^{e,X}_{i+1}) = (f(\zeta_{i+1}-\zeta_i))^{-1}\Big[   -   \mathcal{I}^{(1)}_{i+1}(\partial_{\bar{Y}_i}c^{i+1}_S) +  \mathcal{I}^{(2)}_{i+1}(\partial_{\bar{Y}_i}b^{i+1}_Y) \nonumber \\
 &\qquad \;\;\;    -  \Big(  \frac{\sigma'_{S,i}}{\sigma_{S,i}} \Big( 2\mathcal{I}^{(1,1)}_{i+1}(c^{i+1}_S)  - \mathcal{I}^{(1)}_{i+1}(c^{i+1}_S) +  \mathcal{I}^{(1,2)}_{i+1}(c^{i+1}_{Y, S})  \Big)  +  \Big(\frac{\sigma'_{Y,i}}{\sigma_{Y,i}} - \frac{\rho'_i \rho_i}{1-\rho_i^2}\Big) \Big(2 \mathcal{I}^{(2,2)}_{i+1}(c^{i+1}_Y)   +  \mathcal{I}^{(2)}_{i+1}(b^{i+1}_Y) + \mathcal{I}^{(1,2)}_{i+1}(c^{i+1}_{Y, S})  \Big) \Big) \\
	& \qquad \;\;\;   + \frac{\rho'_i}{1-\rho_i^2} \frac{\sigma_{Y,i}}{\sigma_{S,i}}\Big( \mathcal{I}^{(1,2)}_{i+1}(c^{i+1}_{S}) + \mathcal{I}^{(2,1)}_{i+1}(c^{i+1}_{S}) - \mathcal{I}^{(2)}_{i+1}(c^{i+1}_{S}) \Big) \Big].\nonumber
\end{align}

The previous identity will be used in the next step of the proof. Coming back to \eqref{difference:terms:after:ibp} and using the definition of the weight $\rtheta^{c}_{i+1}$ allows to conclude the proof of the identity \eqref{transfer:derivative:y:next:interval}.

\bigskip

\noindent \emph{Step 3:  The weight sequences $(\rtheta^{e, Y}_{i})_{1\leq i \leq n+1}$, $(\rtheta^{e, X}_{i})_{1\leq i \leq n+1}$ and $(\rtheta^{c}_{i})_{1\leq i \leq n+1}$ and the related spaces $\mathbb{M}_{i,n}(\bar{X}, \bar{Y}, \ell/2)$, $\ell \in \mathbb{Z}$.}

\bigskip

In this last step, we prove the last statement of Lemma \ref{lem:transfer:derivative} concerning the weight sequences $(\rtheta^{e, Y}_{i})_{1\leq i \leq n+1}$, $(\rtheta^{e, X}_{i})_{1\leq i \leq n+1}$ and $(\rtheta^{c}_{i})_{1\leq i \leq n+1}$. 

Following similar lines of reasonings as those used in the proof of Lemma \ref{estimate:theta:time:loca:density}, namely using the fact that $d^{i+1}_S, \, d^{i+1}_Y, \, d^{i+1}_{Y, S} \in \mathbb{M}_{i,n}(\bar{X}, \bar{Y}, 1/2)$ and $\mathcal{D}^{(1)}_{i+1} d^{i+1}_S, \, \mathcal{D}^{(1,1)}_{i+1} d^{i+1}_S, \, \mathcal{D}^{(2)}_{i+1} d^{i+1}_Y, \, \mathcal{D}^{(2,2)}_{i+1} d^{i+1}_Y, \, \mathcal{D}^{(1)}_{i+1} d^{i+1}_Y, \, \mathcal{D}^{(2)}_{i+1} d^{i+1}_Y, \, \mathcal{D}^{(1,2)}_{i+1} d^{i+1}_Y$, $e^{Y, i+1}_S$, $\mathcal{D}^{(1)}_{i+1} e^{Y, i+1}_S$,  $e^{Y, i+1}_Y$, $\mathcal{D}^{(2)}_{i+1} e^{Y, i+1}_Y$, $\mathcal{D}^{(1)}_{i+1} e^{Y, i+1}_S \in \mathbb{M}_{i,n}(\bar{X}, \bar{Y}, 0)$ as well as Lemma \ref{lem:time:degeneracy:property}, we conclude 
$$
 f(\zeta_{i+1}-\zeta_{i}) \rtheta^{e, Y}_{i+1} \in \mathbb{M}_{i, n}(\bar{X}, \bar{Y}, -1/2),\quad i \in \left\{0, \cdots, n-1\right\}.
 $$

Note also that 
\begin{align*}
e^{X, i+1}_S  & = \partial_{\bar{Y}_i} c_S^{i+1} \\
& = \frac12 (a'_S(\bar{Y}_{i+1}) \partial_{\bar{Y}_i} \bar{Y}_{i+1} - a'_S(m_i) m'_i)\\
& = \frac12 (a'_S(\bar{Y}_{i+1})-   a'_S(m_i)) \partial_{\bar{Y}_i}\bar{Y}_{i+1} + \frac12 a'_S(m_i) (\partial_{\bar{Y}_i} \bar{Y}_{i+1} - m'_i)
\end{align*}
so that, using on the one hand the Lipschitz regularity of $a'_S$ and on the other hand \eqref{partial:flow:deriv:Y:bar:process}, from similar arguments as those used in the proof of Lemma \ref{estimate:theta:time:loca:density}, we conclude that
$$ 
\frac12 a'_S(\bar{Y}_{i+1})-   a'_S(m_i)) \partial_{\bar{Y}_i}\bar{Y}_{i+1}, \quad  \frac12 a'_S(m_i) (\partial_{\bar{Y}_i} \bar{Y}_{i+1} - m'_i) \in \mathbb{M}_{i, n}(\bar{X}, \bar{Y}, 1/2)
$$
\noindent which in turn implies that $e^{X, i+1}_S \in \mathbb{M}_{i,n}(\bar{X}, \bar{Y}, 1/2)$. Moreover, standard computations that we omit show that $\mathcal{D}^{(1)}_{i+1} e^{X, i+1}_S \in \mathbb{M}_{i, n}(\bar{X}, \bar{Y}, 0)$ so that by Lemma \ref{lem:time:degeneracy:property} we deduce
$$
 f(\zeta_{i+1}-\zeta_{i})\rtheta^{e, X}_{i+1}\in \mathbb{M}_{i, n}(\bar{X}, \bar{Y}, 0).
$$
We now prove that $f(\zeta_{i+1}-\zeta_{i}) \rtheta^{c}_{i+1} \in \mathbb{M}_{i, n}(\bar{X}, \bar{Y}, -1/2)$ for any $i \in \left\{0, \cdots, n-1\right\}$. We use the decomposition
\begin{align*}
f(\zeta_{i+1}-\zeta_{i}) \rtheta^{c}_{i+1} & = f(\zeta_{i+1}-\zeta_i) \Big(\mathcal{I}^{(2)}_{i+1}\Big(m'_i \theta_{i+1} - \rtheta^{e, Y}_{i+1}\Big) + \partial_{\bar{Y}_i} \theta_{i+1} - \mathcal{I}^{(1)}_{i+1}( \rtheta^{e,X}_{i+1})\Big) + \mathcal{I}^{(1)}_{i+1}\Big(\partial_{\bar{Y}_i} \bar{X}_{i+1} f(\zeta_{i+1}-\zeta_i)\theta_{i+1}\Big)  \\
& + \mathcal{I}^{(2)}_{i+1}\Big(\Big(\sigma'_{Y, i} \Big(\rho_i Z^{1}_{i+1}+ \sqrt{1-\rho_i^2}Z^{2}_{i+1}\Big) + \sigma_{Y,i} \frac{\rho'_i}{\sqrt{1-\rho^2_i}}\Big(\sqrt{1-\rho_i^2} Z^{1}_{i+1} - \rho_i Z^{2}_{i+1}\Big) \Big) f(\zeta_{i+1}-\zeta_i) \theta_{i+1}\Big).
\end{align*}
 We first prove that $f(\zeta_{i+1}-\zeta_i) \Big(\mathcal{I}^{(2)}_{i+1}\Big(m'_i \theta_{i+1} - \rtheta^{e, Y}_{i+1}\Big) + \partial_{\bar{Y}_i} \theta_{i+1} - \mathcal{I}^{(1)}_{i+1}( \rtheta^{e,X}_{i+1})\Big)  \in \mathbb{M}_{i, n}(\bar{X}, \bar{Y}, -1/2)$. We investigate each term appearing on the right-hand side of \eqref{one:part:theta:c}.

In particular, we first use the fact that $c^{i+1}_S,\, c^{i+1}_Y, \, b^{i+1}_Y, \, c^{i+1}_{Y,S} \in \mathbb{M}_{i, n}(\bar{X}, \bar{Y}, 1/2)$, $\partial_{\bar{Y}_i} c^{i+1}_S, \partial_{\bar{Y}_i} b^{i+1}_Y \in \mathbb{M}_{i, n}(\bar{X}, \bar{Y}, 0)$ and the fact that when one applies the differential operators $\mathcal{D}^{(\alpha_1)}_{i+1}, \, \mathcal{D}^{(\alpha_1, \alpha_2)}_{i+1}$ to these elements the resulting random variables belong to $\mathbb{M}_{i, n}(\bar{X}, \bar{Y}, 0)$ for any $(\alpha_1, \alpha_2) \in \left\{1, 2\right\}^2$. From Lemma \ref{lem:time:degeneracy:property}, we thus conclude that the elements $ \mathcal{I}^{(1)}_{i+1}(\partial_{\bar{Y}_i}c^{i+1}_S)$, $ \mathcal{I}^{(1,1)}_{i+1}(c^{i+1}_S)$, $\mathcal{I}^{(1,2)}_{i+1}(c^{i+1}_S)$, $\mathcal{I}^{(2,1)}_{i+1}(c^{i+1}_S)$, $\mathcal{I}^{(2)}_{i+1}(\partial_{\bar{Y}_i}b^{i+1}_Y)$, $\mathcal{I}^{(2,2)}_{i+1}(c^{i+1}_Y)$, $\mathcal{I}^{(1, 2)}_{i+1}(c^{i+1}_{Y,S})$ belong to $\mathbb{M}_{i, n}(\bar{X}, \bar{Y}, -1/2)$ and that $ \mathcal{I}^{(1)}_{i+1}(c^{i+1}_S),  \mathcal{I}^{(2)}_{i+1}(b^{i+1}_Y)$ belong to $\mathbb{M}_{i, n}(\bar{X}, \bar{Y}, 0)$. Moreover, using \A{ND}, one gets that there exists $C>0$ such that for any $i \in \left\{0, \cdots, n-1\right\}$, $|\sigma'_{S, i}/\sigma_{S, i}| +|\sigma'_{Y, i}/\sigma_{Y, i}| + |\sigma_{Y, i}/\sigma_{S, i}| + |\rho'_i/(1-\rho_i^2)| + |\rho'_i \rho_i/ (1-\rho_i^2)| \leq C $.  We thus conclude that $f(\zeta_{i+1}-\zeta_i) \Big(\mathcal{I}^{(2)}_{i+1}\Big(m'_i \theta_{i+1} - \rtheta^{e, Y}_{i+1}\Big) + \partial_{\bar{Y}_i} \theta_{i+1} - \mathcal{I}^{(1)}_{i+1}( \rtheta^{e,X}_{i+1})\Big)  \in \mathbb{M}_{i, n}(\bar{X}, \bar{Y}, -1/2)$.

It thus suffices to prove $\mathcal{I}^{(1)}_{i+1}\Big(\partial_{\bar{Y}_i} \bar{X}_{i+1} f(\zeta_{i+1}-\zeta_i) \theta_{i+1} \Big)$, $ \mathcal{I}^{(2)}_{i+1}\Big( \sigma'_{Y,i} (\rho_iZ^{1}_{i+1}+ \sqrt{1-\rho_i^2}Z^{2}_{i+1}) f(\zeta_{i+1}-\zeta_i)\theta_{i+1} \Big)$ and $\mathcal{I}^{(2)}_{i+1}\Big( \sigma_{Y,i} \frac{\rho'_i}{\sqrt{1-\rho^2_i}}\Big(\sqrt{1-\rho_i^2} Z^{1}_{i+1} - \rho_i Z^{2}_{i+1}\Big) \Big) f(\zeta_{i+1}-\zeta_i) \theta_{i+1}\Big)$ belong to $\mathbb{M}_{i, n}(\bar{X}, \bar{Y}, -1/2)$. In order to do this, we remark that
\begin{align*}
&\partial_{\bar{Y}_i} \bar{X}_{i+1}   = -\frac12 a'_{S, i} + \sigma'_{S, i} Z^{1}_{i+1} \in \mathbb{M}_{i,n}(\bar{X}, \bar{Y}, 1/2), \\
& \mathcal{D}^{(1)}_{i+1}( \partial_{\bar{Y}_i} \bar{X}_{i+1} ) = \frac{\sigma'_{S,i}}{\sigma_{S,i}} \in \mathbb{M}_{i,n}(\bar{X}, \bar{Y}, 0), \\
&  \sigma'_{Y,i}  (\rho_i Z^{1}_{i+1} + \sqrt{1-\rho_i^2} Z^{2}_{i+1})  \in \mathbb{M}_{i,n}(\bar{X}, \bar{Y}, 1/2), \\
&  \mathcal{D}^{(2)}_{i+1}\Big(\sigma'_{Y,i} (\rho_i Z^{1}_{i+1}+ \sqrt{1-\rho_i^2} Z^{2}_{i+1}\Big)  = \frac{\sigma'_{Y, i}}{\sigma_{Y,i}} \in  \mathbb{M}_{i,n}(\bar{X}, \bar{Y}, 0), \\
&   \sigma_{Y,i} \frac{\rho'_i}{\sqrt{1-\rho^2_i}}\Big(\sqrt{1-\rho_i^2} Z^{1}_{i+1} - \rho_i Z^{2}_{i+1}\Big)\in \mathbb{M}_{i,n}(\bar{X}, \bar{Y}, 1/2), \\
& \mathcal{D}^{(2)}_{i+1}\Big(\sigma_{Y,i} \frac{\rho'_i}{\sqrt{1-\rho^2_i}}\Big(\sqrt{1-\rho_i^2} Z^{1}_{i+1} - \rho_i Z^{2}_{i+1}\Big) \Big) = - \frac{\rho_i\rho'_i }{1- \rho_i^2}  \in \mathbb{M}_{i,n}(\bar{X}, \bar{Y}, 0)
 \end{align*}
 \noindent and from Lemma \ref{estimate:theta:time:loca:density}, $f(\zeta_{i+1}-\zeta_i) \theta_{i+1} \in  \mathbb{M}_{i,n}(\bar{X}, \bar{Y}, -1/2)$. From Lemma \ref{lem:time:degeneracy:property}, it follows that $f(\zeta_{i+1}-\zeta_i)\theta_{i+1}  \partial_{\bar{Y}_i} \bar{X}_{i+1} \in   \mathbb{M}_{i,n}(\bar{X}, \bar{Y}, 0) $ and $f(\zeta_{i+1}-\zeta_i)\theta_{i+1}  \sigma'_Y(m_i) m'_i (\rho(W_{\zeta_{i+1}}-W_{\zeta_i})+ \sqrt{1-\rho^2}(\tilde{W}_{\zeta_{i+1}}-\tilde{W}_{\zeta_i})) \in  \mathbb{M}_{i,n}(\bar{X}, \bar{Y}, 0) $. Now following similar computations as those employed in the proof of Lemma \ref{estimate:theta:time:loca:density} and omitting some technical details we obtain $\mathcal{D}^{(\alpha)}_{i+1}(f(\zeta_{i+1}-\zeta_i) \theta_{i+1}) \in \mathbb{M}_{i, n}(\bar{X}, \bar{Y}, -1)$ so that from the chain rule formula and Lemma \ref{lem:time:degeneracy:property}, the random variables $\mathcal{D}^{(\alpha)}_{i+1}( f(\zeta_{i+1}-\zeta_i)\theta_{i+1}  \partial_{\bar{Y}_i} \bar{X}_{i+1} ) $, $\mathcal{D}^{(\alpha)}_{i+1}(\sigma'_{Y,i} (\rho_i Z^{1}_{i+1}+ \sqrt{1-\rho_i^2} Z^2_{i+1}) f(\zeta_{i+1}-\zeta_i)\theta_{i+1}   )$ and $\mathcal{D}^{(\alpha)}_{i+1}(\sigma_{Y,i} \frac{\rho'_i}{\sqrt{1-\rho^2_i}} (\sqrt{1-\rho_i^2} Z^{1}_{i+1} - \rho_i Z^{2}_{i+1})f(\zeta_{i+1}-\zeta_i)\theta_{i+1}   )$ belong to $\mathbb{M}_{i, n}(\bar{X}, \bar{Y}, -1/2)$. From Lemma \ref{lem:time:degeneracy:property}, we thus conclude that $\mathcal{I}^{(1)}_{i+1}\Big(\partial_{\bar{Y}_i} \bar{X}_{i+1}f(\zeta_{i+1}-\zeta_i) \theta_{i+1} \Big)$, $ \mathcal{I}^{(2)}_{i+1}\Big( \sigma'_{Y, i} (\rho_i Z^{1}_{i+1} + \sqrt{1-\rho_i^2} Z^2_{i+1}) f(\zeta_{i+1}-\zeta_i) \theta_{i+1} \Big)$ and $ \mathcal{I}^{(2)}_{i+1}\Big(\sigma_{Y,i} \frac{\rho'_i}{\sqrt{1-\rho^2_i}} (\sqrt{1-\rho_i^2} Z^{1}_{i+1} - \rho_i Z^{2}_{i+1}) f(\zeta_{i+1}-\zeta_i) \theta_{i+1} \Big)$ belong to $ \mathbb{M}_{i, n}(\bar{X}, \bar{Y}, -1/2)$. From the preceding arguments, we eventually deduce that $f(\zeta_{i+1}-\zeta_{i}) \rtheta^{c}_{i+1} \in \mathbb{M}_{i, n}(\bar{X}, \bar{Y}, -1/2)$ for any $i \in \left\{0, \cdots, n-1\right\}$.
 
Finally, from the very definition of the weights on the last time interval $\rtheta^{e, Y}_{n+1}$ and $\rtheta^{e, X}_{n+1}$ one directly gets that
$$
(1-F(T-\zeta_n)) \rtheta^{e, Y}_{n+1} =  m'_{n} + \sigma'_{Y,n}  \Big(\rho_n Z^{1}_{n+1}+ \sqrt{1-\rho_n^2} Z^{2}_{n+1}\Big) + \sigma_{Y, n} \frac{\rho'_n}{\sqrt{1-\rho^2_n}} \Big(\sqrt{1-\rho^2_n} Z^{1}_{n+1} -\rho_n Z^{2}_{n+1} \Big)
$$
\noindent belongs to $\mathbb{M}_{n, n}(\bar{X}, \bar{Y}, 0)$ and that
$$
(1-F(T-\zeta_n)) \rtheta^{e, X}_{n+1} = -\frac12 a'_{S, n} + \sigma'_{S, n} Z^{1}_{n+1} 
$$
\noindent belongs to $\mathbb{M}_{n, n}(\bar{X}, \bar{Y}, 1/2)$. The proof is now complete.

\section{Some technical results}

\subsection{Emergence of jumps in the renewal process $N$}\label{add:jumps:renewal:process}
The first result is used in the proof of the probabilistic representation in Theorem \ref{theorem:probabilistic:formulation} and is used to express that time integrals add jumps to the renewal process $N$. In what follows, $N$ is a renewal process in the sense of Definition \ref{counting:process}.
\begin{lem}
	\label{lem:add:new:jumps}
	Let $ n \in \N $ and $ G:\{(t_1,\dots,t_{n+2}): 0<t_1<\dots<t_{n+1}<t_{n+2}:=T\}\rightarrow \mathbb{R}$ be a measurable function such that
	$\E\Big[\int_{\zeta _n}^T |G(\zeta_1,\dots,\zeta_{n},s,T)| \I_\seq{N_T=n}ds\Big]<\infty  $. Then, it holds
	\begin{align*}
	& \E\Big[\int_{\zeta_n}^TG(\zeta_1,\dots,\zeta_{n},s,T)\I_\seq{N_T=n}ds\Big] \\
	& \quad =
	\E\Big[G(\zeta_1,\dots,\zeta_{n},\zeta_{n+1},T) (1-F(T-\zeta_{n+1}))^{-1} (1-F(T-\zeta_n)) (f(\zeta_{n+1}-\zeta_n))^{-1}\I_\seq{N_T=n+1}\Big].
	\end{align*}
\end{lem}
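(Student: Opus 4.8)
\textbf{Proof plan for Lemma \ref{lem:add:new:jumps}.}

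The plan is to reduce both sides to explicit deterministic integrals over simplices using the joint law of the jump times recorded in \eqref{probabilistic:representation:time:integrals}, and then to match them by a change of the order of integration (Fubini) together with the fact that $f$ is a probability density. First I would rewrite the left-hand side: since $\left\{N_T=n\right\} = \left\{\tau_n \leq T < \tau_{n+1}\right\}$ and the integrand $\int_{\zeta_n}^T G(\zeta_1,\dots,\zeta_n,s,T)\,ds$ is a measurable function of $(\tau_1,\dots,\tau_n)$ only on this event (recall $\zeta_i = \tau_i \wedge T$ and on $\left\{N_T=n\right\}$ one has $\zeta_i = \tau_i$ for $i\leq n$), I apply \eqref{probabilistic:representation:time:integrals} with $\Phi(s_1,\dots,s_n) = \int_{s_n}^T G(s_1,\dots,s_n,s,T)\,ds$. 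This gives
\begin{align*}
\E\Big[\int_{\zeta_n}^T G(\zeta_1,\dots,\zeta_n,s,T)\I_\seq{N_T=n}\,ds\Big] = \int_{\Delta_n(T)} \Big(\int_{s_n}^T G(s_1,\dots,s_n,s,T)\,ds\Big) (1-F(T-s_n)) \prod_{j=0}^{n-1} f(s_{j+1}-s_j)\, d\mathbf{s}_n,
\end{align*}
with the convention $s_0=0$. The integrability hypothesis guarantees this iterated integral is absolutely convergent, so Fubini's theorem lets me merge the inner $ds$-integral with the $d\mathbf{s}_n$-integral into a single integral over $\Delta_{n+1}(T)$, relabelling the new variable $s$ as $s_{n+1}$:
\begin{align*}
= \int_{\Delta_{n+1}(T)} G(s_1,\dots,s_n,s_{n+1},T)\, (1-F(T-s_n)) \prod_{j=0}^{n-1} f(s_{j+1}-s_j)\, d\mathbf{s}_{n+1}.
\end{align*}

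Next I would treat the right-hand side. On the event $\left\{N_T=n+1\right\} = \left\{\tau_{n+1}\leq T < \tau_{n+2}\right\}$ one has $\zeta_i=\tau_i$ for $i\leq n+1$, so the random variable inside the expectation is a measurable function of $(\tau_1,\dots,\tau_{n+1})$, namely $\Psi(s_1,\dots,s_{n+1}) = G(s_1,\dots,s_{n+1},T)\,(1-F(T-s_{n+1}))^{-1}(1-F(T-s_n))(f(s_{n+1}-s_n))^{-1}$. Applying \eqref{probabilistic:representation:time:integrals} again, now with $n+1$ in place of $n$, yields
\begin{align*}
\E\Big[G(\zeta_1,\dots,\zeta_{n+1},T) (1-F(T-\zeta_{n+1}))^{-1}(1-F(T-\zeta_n))(f(\zeta_{n+1}-\zeta_n))^{-1}\I_\seq{N_T=n+1}\Big] \\
= \int_{\Delta_{n+1}(T)} G(s_1,\dots,s_{n+1},T)\,(1-F(T-s_{n+1}))^{-1}(1-F(T-s_n))(f(s_{n+1}-s_n))^{-1} (1-F(T-s_{n+1})) \prod_{j=0}^{n} f(s_{j+1}-s_j)\, d\mathbf{s}_{n+1}.
\end{align*}
In the integrand the factor $(1-F(T-s_{n+1}))^{-1}$ cancels with $(1-F(T-s_{n+1}))$, and the factor $(f(s_{n+1}-s_n))^{-1}$ cancels the $j=n$ term $f(s_{n+1}-s_n)$ of the product $\prod_{j=0}^n f(s_{j+1}-s_j)$, leaving exactly $G(s_1,\dots,s_{n+1},T)\,(1-F(T-s_n))\prod_{j=0}^{n-1} f(s_{j+1}-s_j)$, which matches the expression obtained for the left-hand side. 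This proves the identity.

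The only point requiring a little care — and the one I would flag as the main (minor) obstacle — is the justification of Fubini's theorem and of the absolute cancellations: one needs $f>0$ everywhere (part of Definition \ref{counting:process}) so that $(f(s_{n+1}-s_n))^{-1}$ is well defined on the relevant simplex, and one needs $1-F(T-s_{n+1})>0$ on $\Delta_{n+1}(T)$, which holds since $s_{n+1}<T$ and $F$ is a genuine distribution function of a positive random variable; the integrability assumption $\E[\int_{\zeta_n}^T|G(\zeta_1,\dots,\zeta_n,s,T)|\I_\seq{N_T=n}\,ds]<\infty$ is precisely what is needed to apply Fubini and to ensure all the integrals above are finite. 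Everything else is bookkeeping on the simplices $\Delta_n(T)$ and $\Delta_{n+1}(T)$ and the trivial observation that extending the domain of integration from $\left\{s_n<s<T\right\}$ (with $\mathbf{s}_n$ ranging over $\Delta_n(T)$) to $\Delta_{n+1}(T)$ is a single relabelling.
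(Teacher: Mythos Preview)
Your proof is correct and follows essentially the same approach as the paper: both rely on the explicit joint law \eqref{probabilistic:representation:time:integrals} for the jump times and Fubini's theorem to match the two sides. The paper is slightly terser, expanding only the right-hand side and collapsing the cancellations and the $ds_{n+1}$-integration in one step, whereas you expand both sides separately and then compare; the content is identical.
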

\begin{proof}
The proof follows by rewriting the above expectations using \eqref{probabilistic:representation:time:integrals}. We rewrite the expectation on the right-hand side in integral form. By Fubini's theorem, we obtain
\begin{align*}
& \E\Big[G(\zeta_1,\dots,\zeta_{n},\zeta_{n+1},T) (1-F(T-\zeta_{n+1}))^{-1} (1-F(T-\zeta_n)) (f(\zeta_{n+1}-\zeta_n))^{-1}\I_\seq{N_T=n+1}\Big] \\
& = \int_{\Delta_{n+1}(T)} G(s_1, \cdots, s_{n+1}, T) (1-F(T-s_{n+1}))^{-1} (1-F(T-s_n))(f(s_{n+1}-s_n))^{-1} \\
& \quad \times (1-F(T-s_{n+1})) \prod_{j=0}^{n} f(s_{j+1}-s_{j}) \, d\s_{n+1}\\
& = \int_{\Delta_n(T)} \int_{s_n}^{T} G(s_1, \cdots, s_{n+1}, T)  \, ds_{n+1} \, (1-F(T-s_n))\prod_{j=0}^{n-1} f(s_{j+1}-s_{j}) \, d\s_{n}.
\end{align*}

This completes the proof.
\end{proof}

\begin{lem}\label{estimate:theta:time:loca:density} Let $n\in \mathbb{N}$. On the set $\left\{ N_T= n\right\}$, the sequence of weights $(\theta_i)_{1\leq i \leq n+1}$ defined by \eqref{definition:first:theta:theorem:probabilistic:representation} and \eqref{definition:theta:theorem:probabilistic:representation} satisfy:
\begin{align}
\forall i \in \left\{1,\cdots, n\right\}, \, f(\zeta_i - \zeta_{i-1}) \theta_{i} \in \mathbb{M}_{i-1,n}(\bar{X}, \bar{Y}, -1/2), \quad (1-F(T- \zeta_{n})) \theta_{n+1} \in \mathbb{M}_{n,n}(\bar{X}, \bar{Y}, 0).
\end{align}
\end{lem}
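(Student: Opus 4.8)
\textbf{Proof proposal for Lemma \ref{estimate:theta:time:loca:density}.}

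The plan is to show, term by term, that each constituent of $\theta_i$ lies in an appropriate space $\mathbb{M}_{i-1,n}(\bar X,\bar Y,\ell/2)$ and then assemble the pieces using the algebra of the spaces $\mathbb{M}_{i-1,n}$ recorded in Lemma \ref{lem:time:degeneracy:property}. The key preliminary observation is a Lipschitz-in-time estimate for the frozen coefficients: since $a_S,a_Y,b_Y,\sigma_S\sigma_Y$ are Lipschitz (by \A{AR}) and $s\mapsto m_s(\bar Y_{i-1})$ has bounded derivative, one has $|a_S(\bar Y_i)-a_S(m_{i-1})|\le C|\bar Y_i - m_{i-1}|$, and similarly for the other coefficients. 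Hence $c^i_S,c^i_Y,b^i_Y,c^i_{Y,S}$ are all dominated (up to a constant) by $|\bar Y_i - m_{i-1}|$. Using the dynamics \eqref{euler:scheme}, $\bar Y_i - m_{i-1} = \sigma_{Y,i-1}(\rho_{i-1}Z^1_i + \sqrt{1-\rho_{i-1}^2}Z^2_i)$, so $|\bar Y_i - m_{i-1}| \le C|y_i - m_{i-1}(y_{i-1})|$ as a function on $\mathbb{R}^4\times\Delta_{n+1}(T)$; the space-time inequality \eqref{space:time:inequality} applied to $(y_i - m_{i-1}(y_{i-1}))$ against $\bar q_c(s_i-s_{i-1},\cdot)$ then yields a gain of $(s_i-s_{i-1})^{1/2}$, i.e. $c^i_S,c^i_Y,b^i_Y,c^i_{Y,S}\in\mathbb{M}_{i-1,n}(\bar X,\bar Y,1/2)$.

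Next I would control the action of the derivative operators $\mathcal{D}^{(\alpha)}_i$ on these coefficients. Since $\mathcal{D}^{(1)}_i = \partial_{\bar X_i}$ annihilates all of $c^i_S,c^i_Y,b^i_Y,c^i_{Y,S}$ (they depend only on $\bar Y_{i-1},\bar Y_i$), and $\mathcal{D}^{(2)}_i = \partial_{\bar Y_i}$ applied to, say, $c^i_S = \tfrac12(a_S(\bar Y_i)-a_S(m_{i-1}))$ gives $\tfrac12 a_S'(\bar Y_i)$, which is bounded by \A{AR}, one obtains $\mathcal{D}^{(\alpha)}_i(c^i_S),\mathcal{D}^{(\alpha)}_i(c^i_Y),\mathcal{D}^{(\alpha)}_i(b^i_Y),\mathcal{D}^{(\alpha)}_i(c^i_{Y,S})\in\mathbb{M}_{i-1,n}(\bar X,\bar Y,0)$, and the same for the second-order operators $\mathcal{D}^{(\alpha_1,\alpha_2)}_i$ (bounded by $\|a_S''\|_\infty$ etc.). Now invoke Lemma \ref{lem:time:degeneracy:property}: from $c^i_S\in\mathbb{M}_{i-1,n}(\bar X,\bar Y,1/2)$ with $\mathcal{D}^{(1)}_i c^i_S\in\mathbb{M}_{i-1,n}(\bar X,\bar Y,0)$ and $\mathcal{D}^{(1,1)}_i c^i_S\in\mathbb{M}_{i-1,n}(\bar X,\bar Y,0)$ we get $\mathcal{I}^{(1,1)}_i(c^i_S)\in\mathbb{M}_{i-1,n}(\bar X,\bar Y,((\tfrac12-2)\wedge(0-1)\wedge 0)/2)=\mathbb{M}_{i-1,n}(\bar X,\bar Y,-3/4\cdot 2/2)$; more carefully, $((\ell-2)\wedge(\ell'-1)\wedge\ell'')/2$ with $\ell=1,\ell'=0,\ell''=0$ gives $(-1)/2$, so $\mathcal{I}^{(1,1)}_i(c^i_S)\in\mathbb{M}_{i-1,n}(\bar X,\bar Y,-1/2)$. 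Likewise $\mathcal{I}^{(1)}_i(c^i_S)\in\mathbb{M}_{i-1,n}(\bar X,\bar Y,((1-1)\wedge 0)/2)=\mathbb{M}_{i-1,n}(\bar X,\bar Y,0)$, and analogously $\mathcal{I}^{(2,2)}_i(c^i_Y)\in\mathbb{M}_{i-1,n}(\bar X,\bar Y,-1/2)$, $\mathcal{I}^{(2)}_i(b^i_Y)\in\mathbb{M}_{i-1,n}(\bar X,\bar Y,0)$, and $\mathcal{I}^{(1,2)}_i(c^i_{Y,S})\in\mathbb{M}_{i-1,n}(\bar X,\bar Y,-1/2)$.

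Finally I would sum: by the product/stability property in Lemma \ref{lem:time:degeneracy:property} the space $\mathbb{M}_{i-1,n}(\bar X,\bar Y,\ell/2)$ for fixed $(i-1,n)$ is closed under addition of elements of lower or equal index, and an element of $\mathbb{M}_{i-1,n}(\bar X,\bar Y,0)$ is a fortiori in $\mathbb{M}_{i-1,n}(\bar X,\bar Y,-1/2)$ after multiplying by a bounded quantity — more simply, the sum of terms in $\mathbb{M}_{i-1,n}(\bar X,\bar Y,-1/2)$ and $\mathbb{M}_{i-1,n}(\bar X,\bar Y,0)$ lies in $\mathbb{M}_{i-1,n}(\bar X,\bar Y,-1/2)$ since $(s_{i}-s_{i-1})^{-1/2}\ge C(s_i-s_{i-1})^0$ for $s_i-s_{i-1}\le T$ (absorbing $T^{1/2}$ into the non-decreasing constant). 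Recalling $\theta_i = (f(\zeta_i-\zeta_{i-1}))^{-1}[\mathcal{I}^{(1,1)}_i(c^i_S)-\mathcal{I}^{(1)}_i(c^i_S)+\mathcal{I}^{(2,2)}_i(c^i_Y)+\mathcal{I}^{(2)}_i(b^i_Y)+\mathcal{I}^{(1,2)}_i(c^i_{Y,S})]$, this gives $f(\zeta_i-\zeta_{i-1})\theta_i\in\mathbb{M}_{i-1,n}(\bar X,\bar Y,-1/2)$ for $i\in\{1,\dots,n\}$. For the last weight, $(1-F(T-\zeta_n))\theta_{n+1}=1$ is a deterministic constant, trivially in $\mathbb{M}_{n,n}(\bar X,\bar Y,0)$ since $|1|^p\,\bar q_c = \bar q_c \le C(s_{n+1}-s_n)^0\bar q_{c'}$ with $c'=c$. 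The main obstacle is bookkeeping: one must be careful that the index $\ell''$ in the second-order clause of Lemma \ref{lem:time:degeneracy:property} is $0$ and not something worse, which relies on $\mathcal{D}^{(1,1)}_i c^i_S=\tfrac12 a_S''(\bar Y_i)\partial_{\bar Y_i}\bar Y_{i+1}\cdot 0 + \dots$ actually being bounded — this is where \A{AR} (all derivatives of $a_S,a_Y,b_Y$ of order $\ge 1$ bounded) is used crucially.
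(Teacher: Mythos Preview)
Your proposal is correct and follows essentially the same approach as the paper: show that $c^i_S,c^i_Y,b^i_Y,c^i_{Y,S}\in\mathbb{M}_{i-1,n}(\bar X,\bar Y,1/2)$ via the Lipschitz property and the space--time inequality \eqref{space:time:inequality}, check that their $\mathcal{D}^{(\alpha)}_i$ and $\mathcal{D}^{(\alpha_1,\alpha_2)}_i$ derivatives are bounded (hence in $\mathbb{M}_{i-1,n}(\bar X,\bar Y,0)$), invoke Lemma~\ref{lem:time:degeneracy:property} to place each $\mathcal{I}$-term in $\mathbb{M}_{i-1,n}(\bar X,\bar Y,-1/2)$ or $\mathbb{M}_{i-1,n}(\bar X,\bar Y,0)$, and sum. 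The last sentence of your proposal is garbled (the expression for $\mathcal{D}^{(1,1)}_i c^i_S$ makes no sense as written --- it is simply $\partial_{\bar X_i}^2 c^i_S=0$), but this does not affect the argument since $0$ trivially lies in $\mathbb{M}_{i-1,n}(\bar X,\bar Y,0)$.
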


\begin{proof} We investigate each term appearing in the definition of $\theta_i \in \mathbb{S}_{i-1, n}(\bar{X}, \bar{Y})$ and seek to apply Lemma \ref{lem:time:degeneracy:property}. From the Lipschitz property of $a_S$ and the space-time inequality \eqref{space:time:inequality}, for any $c>0$ and any $c'>c$, the map $(x_{i-1}, y_{i-1}, x_i, y_i, \s_{n+1}) \mapsto c^{i}_S(x_{i-1}, y_{i-1}, x_i, y_i, \s_{n+1})$ satisfies 
\begin{align*}
|c^{i}_S(x_{i-1}, y_{i-1}, x_i, y_i, \s_{n+1})|^p \bar{q}_{c}(s_{i}-s_{i-1}, x_{i-1}, y_{i-1}, x_i, y_i)& \leq C |y_i-m_{i-1}(y_{i-1})|^p  \bar{q}_{c}(s_{i}-s_{i-1}, x_{i-1}, y_{i-1}, x_i, y_i)\\
&  \leq C (s_{i}-s_{i-1})^{p/2}  \bar{q}_{c'}(s_{i}-s_{i-1}, x_{i-1}, y_{i-1}, x_i, y_i)
\end{align*}

\noindent so that, the random variables $c^{i}_S \in \mathbb{M}_{i-1,n}(\bar{X}, \bar{Y}, 1/2)$, for any $i \in \left\{1, \cdots, n+1\right\}$. Moreover, from the boundedness of the first and second derivatives of $a_S$, it follows
$$
\mathcal{D}^{(1)}_{i} c^{i}_S = \partial_{\bar{Y}_i} c^{i}_S(\bar{X}_{i-1}, \bar{Y}_{i-1}, \bar{X}_i, \bar{Y}_i, \zeta^{n+1}) = \partial_4 c^{i}_S(\bar{X}_{i-1}, \bar{Y}_{i-1}, \bar{X}_i, \bar{Y}_i, \zeta^{n+1}) = \frac12 a'_S(\bar{Y}_i) \in \mathbb{M}_{i-1,n}(\bar{X}, \bar{Y}, 0)
$$
\noindent and
$$
\mathcal{D}^{(1,1)}_{i} c^{i}_S = \partial^2_{\bar{Y}_i} c^{i}_S(\bar{X}_{i-1}, \bar{Y}_{i-1}, \bar{X}_i, \bar{Y}_i, \zeta^{n+1}) = \partial^2_4 c^{i}_S(\bar{X}_{i-1}, \bar{Y}_{i-1}, \bar{X}_i, \bar{Y}_i, \zeta^{n+1}) = \frac12 a''_S(\bar{Y}_i) \in \mathbb{M}_{i-1,n}(\bar{X}, \bar{Y}, 0).
$$

From Lemma \ref{lem:time:degeneracy:property}, we thus conclude 
$$
\mathcal{I}^{(1)}_i(c^i_S) \in \mathbb{M}_{i,n}(\bar{X}, \bar{Y}, 0) \quad \mbox{and} \quad \mathcal{I}^{(1,1)}_i(c^i_S) \in \mathbb{M}_{i,n}(\bar{X}, \bar{Y}, -1/2), \quad  i \in \left\{1, \cdots, n\right\}.
$$

In a completely analogous manner, omitting some technical details, we derive
$$
 \mathcal{I}^{(2)}_{i}(b^{i}_{Y}) \in \mathbb{M}_{i,n}(\bar{X}, \bar{Y}, 0), \quad \mbox{and} \quad  \mathcal{I}^{(1,2)}_{i}(c^{i}_{Y,S}), \,  \mathcal{I}^{(2,2)}_{i}(c^{i}_Y)  \in  \mathbb{M}_{i,n}(\bar{X}, \bar{Y}, -1/2).
$$

Hence, we obtain $f(\zeta_i - \zeta_{i-1}) \theta_{i} \in \mathbb{M}_{i-1,n}(\bar{X}, \bar{Y}, -1/2)$, for any $i \in \left\{1,\cdots, n\right\}$. We finally observe that $(1-F(T- \zeta_{n})) \theta_{n+1}=1 \in  \mathbb{M}_{ n ,n}(\bar{X}, \bar{Y}, 0)$. The proof is now complete.
\end{proof}

\begin{lem}\label{lemma:semigroup:property}Let $T>0$ and $n$ a positive integer. For any $\s_n=(s_1, \cdots, s_n) \in \Delta_{n}(T)$, for any $(x,y) \in \mathbb{R}^2$, for any positive constant $c$ there exist two positive constants $C$ and $c'>c$ such that the transition density $(t, x, y) \mapsto \bar{q}_{c}(t, x_0, y_0, x, y)$ defined by \eqref{def:kernel:bar:q} satisfies the following semigroup property:
$$
\int_{(\mathbb{R}^2)^{n}} \bar{q}_{c}(T-s_n, x_{n}, y_n, x, y) \times \bar{q}_{c}(s_{n}-s_{n-1}, x_{n-1}, y_{n-1}, x_n, y_n) \times  \cdots  \times \bar{q}_{c}(s_1,x_0, y_0, x_1, y_1) \, d\x_n d\y_n \leq C^n\bar{q}_{c'}(T, x_0, y_0, x, y).
$$
\end{lem}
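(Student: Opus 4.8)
The statement is an iterated Chapman--Kolmogorov-type inequality for the Gaussian kernels $\bar q_c$, where the only subtlety is that the spatial mean of $\bar q_c(t,x_0,y_0,\cdot,\cdot)$ is not $(x_0,y_0)$ but $(x_0, m_t(y_0))$, and the flow map $y\mapsto m_t(y)$ does not compose additively the way the Brownian mean would. The plan is to reduce to the genuine heat kernel by a change of variables in the $y$-coordinate, then invoke the exact semigroup property of the Gaussian, and finally absorb the defect coming from the nonlinearity of the flow into the constant $C$ at each step, paying a uniform multiplicative factor that accumulates to $C^n$.

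First I would treat a single convolution step: show that for some $C=C(T,b_Y,\kappa)$ and $c'>c$,
\begin{equation*}
\int_{\mathbb{R}^2} \bar q_c(t-s, x_1, y_1, x, y)\,\bar q_c(s, x_0, y_0, x_1, y_1)\, dx_1\, dy_1 \leq C\, \bar q_{c'}(t, x_0, y_0, x, y),
\end{equation*}
for $0<s<t\le T$. The $x$-integral is exactly the convolution of two centered Gaussians and produces $\bar q_c$ in the $x$-variable with variance parameter $c(t)$ and mean $x_0$, with no loss; so the work is entirely in the $y$-variable. Here one writes $y_1 - m_{t-s}(\cdot)$ and uses that $m_{t-s}(y_1) = m_{t-s}(m_s(y_0)) + (m_{t-s}(y_1) - m_{t-s}(m_s(y_0)))$ together with $m_{t-s}(m_s(y_0)) = m_t(y_0)$ (the flow property, using time-homogeneity of $b_Y$ as recorded in Subsection \ref{subsection:choice:approximation:process}). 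Since $b_Y$ has bounded derivatives of all orders $\ge 1$ by \A{AR}, the flow satisfies a global Lipschitz (indeed bi-Lipschitz) bound $e^{-L t}|y_1 - m_s(y_0)| \le |m_{t-s}(y_1) - m_t(y_0)| \le e^{L t}|y_1 - m_s(y_0)|$ with $L = \|b_Y'\|_\infty$. This lets one bound the exponent $(y - m_{t-s}(y_1))^2/(2c(t-s))$ from below by a comparable quadratic form in $(y - m_t(y_0))$ and $(y_1 - m_s(y_0))$, up to enlarging $c$ to some $\tilde c$ depending on $e^{2LT}$; then the Gaussian integral over $y_1$ is carried out as usual. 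The price is a multiplicative constant $C$ that depends only on $T$, $\kappa$ and $\|b_Y'\|_\infty$, hence is uniform over the step.

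Second, I would iterate this one-step estimate $n$ times, integrating successively over $(x_1,y_1)$, then $(x_2,y_2)$, and so on up to $(x_n,y_n)$, each time replacing a pair of adjacent kernels by a single kernel with a slightly larger variance parameter and a constant factor $C$. After $n$ steps the variance parameter has grown from $c$ to some $c'=c'(T,b_Y,\kappa)$ — crucially bounded uniformly in $n$, because the enlargement factor at each step is governed by $e^{2L(s_{i+1}-s_i)}$ and $\prod_i e^{2L(s_{i+1}-s_i)} = e^{2LT}$ telescopes — and one collects $C^n$. This is exactly the form asserted. I would remark that a cleaner bookkeeping is to fix once and for all $c' := c\,e^{2LT}$ (or a convenient majorant thereof), verify the one-step bound with this fixed $c'$ on both sides (so that the enlargement is ``pre-paid''), and then the iteration is a trivial induction with constant $C$ at each step.

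The main obstacle is the non-additivity of the flow: unlike the Brownian case where $m_t(y_0) = y_0$ and the means simply add, here one must control $m_{t-s}(y_1) - m_t(y_0)$ in terms of $y_1 - m_s(y_0)$ uniformly over the whole time interval, and ensure that the resulting loss in the Gaussian variance does not compound badly over $n$ steps. The telescoping of $\prod e^{2L(s_{i+1}-s_i)}$ is what saves the day and keeps $c'$ independent of $n$; making this precise — i.e.\ tracking that every quadratic-form comparison only costs an $O(s_{i+1}-s_i)$ correction to the exponent rather than an $O(1)$ one — is the one point that deserves care. Everything else is the standard Gaussian convolution computation and I would not write it out in detail.
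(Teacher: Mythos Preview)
Your argument is correct and rests on exactly the same two ingredients as the paper's proof: the flow identity $m_{t-s}(m_s(y_0)) = m_t(y_0)$ and the bi-Lipschitz control on $y\mapsto m_t(y)$ coming from $\|b_Y'\|_\infty<\infty$. The organizational difference is that the paper performs one global change of variables $y_i = m_{s_i}(z_i)$ for all $i$ at once, which turns the $y$-integral into a genuine Gaussian convolution in the $z_i$'s; this costs $n$ Jacobian factors each bounded by a constant depending on $T$, which is where the $C^n$ comes from. Your step-by-step scheme instead picks up a factor $e^{2L(s_{i+1}-s_i)}$ at each convolution and telescopes it to $e^{2LT}$, so you actually obtain a constant independent of $n$ --- stronger than the lemma asserts. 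One small caveat: your ``pre-paid'' variant with a fixed $c'$ on both sides of the one-step inequality does not close as written, since each step still enlarges the variance by $e^{2L(s_{i+1}-s_i)}$; but your primary telescoping bookkeeping already handles this correctly, so the issue is cosmetic.
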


\begin{proof}
The $dx_1 \cdots dx_n$ integrals are treated using the standard semigroup property of Gaussian kernels so that from the very definition of $\bar{q}_{c}$, it directly follows
\begin{align*}
& \int_{(\mathbb{R}^2)^{n}} \bar{q}_{c}(T-s_n, x_{n}, y_n, x, y) \times \bar{q}_{c}(s_{n}-s_{n-1}, x_{n-1}, y_{n-1}, x_n, y_n) \times  \cdots  \times \bar{q}_{c}(s_1,x_0, y_0, x_1, y_1) \, d\x_n d\y_n\\
& \leq  \frac{1}{\sqrt{2\pi cT}} e^{-\frac{(x-x_0)^2}{2c T}} \int_{\mathbb{R}^n} \frac{1}{\sqrt{2\pi c(T-s_n)}}e^{-\frac{(y-m_{T-s_n}(y_n))^2}{2c (T-s_{n})}}\times \cdots \times  \frac{1}{\sqrt{2\pi c s_1}} e^{-\frac{(y_1-m_{s_1}(y_0))^2}{2c s_{1}}} \, d\y_n
\end{align*}

We now perform the change of variables $y_1 = m_{s_1}(z_1), y_2 = m_{s_2}(z_2), \cdots, y_n=m_{s_n}(z_n)$. Observe that since $b_Y$ admits a bounded first derivative the determinants of the Jacobians $J_{s_1}(z_1):= \partial_{x} m_{s_1}(z_1), \cdots, J_{T-s_n}(z_n) = \partial_x m_{T-s_n}(z_ n)$ are (locally) uniformly bounded for any $(s_1, \cdots, s_n) \in \Delta_{n}(T)$. Remark also that from the semigroup property $m_{s_{i+1}-s_{i}}(m_{s_{i}}(z_{i})) = m_{s_{i+1}}(z_{i})$, for $1\leq  i\leq n$ with the convention $s_{n+1}=T$. Hence, for some positive constants $C$ and $c'>c$ that may change from line to line, we get 
\begin{align*}
& \frac{1}{\sqrt{2\pi c T}} e^{-\frac{(x-x_0)^2}{2c T}} \int_{\mathbb{R}^n} \frac{1}{\sqrt{2\pi c(T-s_n)}}e^{-\frac{(y-m_{T-s_n}(y_n))^2}{2c (T-s_{n})}}\times \cdots \times  \frac{1}{\sqrt{2\pi c s_1}} e^{-\frac{(y_1-m_{s_1}(y_0))^2}{2c s_{1}}} \, d\y_n \\
& \leq C^{n}\frac{1}{\sqrt{2\pi cT}} e^{-\frac{(x-x_0)^2}{2c T}}   \int_{\mathbb{R}^n} \frac{1}{\sqrt{2\pi c(T-s_n)}}e^{-\frac{(y-m_{T}(z_n))^2}{2c (T-s_{n})}}\times \cdots \times  \frac{1}{\sqrt{2\pi c s_1}} e^{-\frac{(m_{s_1}(z_1)-m_{s_1}(y_0))^2}{2c s_{1}}} \, d\z_n  \\
& \leq C^{n}\frac{1}{\sqrt{2\pi cT}} e^{-\frac{(x-x_0)^2}{2c T}} \int_{\mathbb{R}^n} \frac{1}{\sqrt{2\pi c(T-s_n)}}e^{-C^{-1}\frac{(m^{-1}_T(y)-z_n)^2}{2c (T-s_{n})}}\times \cdots \times  \frac{1}{\sqrt{2\pi c s_1}} e^{-C^{-1}\frac{(z_1-y_0)^2}{2c s_{1}}} \, d\z_n  \\
&  \leq C^{n}\frac{1}{2\pi c CT} e^{-\frac{(x-x_0)^2}{2c T}}  e^{- \frac{(m^{-1}_T(y)-y_0)^2}{2c C t}}\\
& \leq C^n \bar{q}_{c'}(T x_0, y_0, x_1, y_1)
\end{align*}

\noindent where we first used the bi-Lipschitz property of the flow $(s, x)\mapsto m_s(x)$ which yields 
$$
\forall t\in [0,T], \, C^{-1} |x-z|^2 \leq |m_{t}(x) - m_{t}(z)|^2 \leq C |x-z|^2
$$
\noindent for some positive constant $C\geq1$ and then the semigroup property satisfied by Gaussian kernels. This completes the proof.
\end{proof}

\section{Some useful formulas}
We here provide some useful formulas in order to device the unbiased Monte Carlo algorithms based on Theorem \ref{theorem:probabilistic:formulation} and Theorem \ref{thm:ibp:formulae}. Their proofs follow from standard computations as those used in subsection \ref{sec:tailor:mall:calculus} and are omitted.

The following formulae are required in order to compute the weights $(\theta_i)_{1\leq i \leq N_T}$ appearing in the identity \eqref{probabilistic:formulation:theorem}. Note that in our examples since $a_Y(.)$ is constant, one has $c^{i}_Y(.) \equiv 0$ for $i \in \left\{1, \cdots, N_T\right\}$. Hence, for $i \in \left\{1, \cdots, N_T\right\}$, one has:
{\small
\begin{align*}
	\mathcal{I}_i^{(1)}(c_S^i) &= c_S^i \mathcal{I}_i^{(1)}(1), \\
	   	      	\mathcal{I}_i^{(1, 1)}(c_S^i) &= c_S^i ((\mathcal{I}_i^{(1)}(1))^2 - \mathcal{D}_i^{(1)} \mathcal{I}_i^{(1)}(1)), \\
	\mathcal{I}_i^{(2)}(b_Y^i)  &= b_Y^i \mathcal{I}_i^{(2)}(1) - b'_Y(\bar{Y}_i) , \\
	\mathcal{I}_i^{(1, 2)}(c_{Y, S}^i) &= \mathcal{I}_i^{(2)}(c_{Y, S}^i \mathcal{I}_i^{(1)}(1)  ) =  c_{Y, S}^i \mathcal{I}_i^{(1)}(1)\mathcal{I}_i^{(2)}(1) - \mathcal{I}_i^{(1)}(1)  \mathcal{D}_i^{(2)}(c_{Y,S}^i) - c^i_{Y,S}  \mathcal{D}_i^{(2)} \mathcal{I}_i^{(1)}(1).
   \end{align*}
}   
   
The following formulae are needed in order to compute the weights for the Delta appearing in the identity \eqref{IBP:s0}, for $i \in \left\{1, \cdots, N_T\right\}$ one has:
{\small
\begin{align*}
	\mathcal{D}_{i}^{(1)}(\mathcal{I}_i^{(1, 1)}(c_S^i) ) &= 2 c_S^i \mathcal{I}_i^{(1)}(1) \mathcal{D}_{i}^{(1)}\mathcal{I}_i^{(1)}(1)  ,\\
	\mathcal{D}_{i}^{(1)}( \mathcal{I}_i^{(1)}(c_S^i) ) &= c_S^i \mathcal{D}_{i}^{(1)} \mathcal{I}_i^{(1)}(1),\\
	\mathcal{D}_{i}^{(1)}( \mathcal{I}_i^{(2)}(b_Y^i) ) &=  b_Y^i \mathcal{D}_{i}^{(1)}\mathcal{I}_i^{(2)}(1)  ,\\
	\mathcal{D}_{i}^{(1)}( \mathcal{I}_i^{(1, 2)}(c_{Y, S}^i)) &= c_{Y,S}^i  \mathcal{D}_{i}^{(1)}\mathcal{I}_i^{(1)}(1) \mathcal{I}_i^{(2)}(1) +  c_{Y,S}^i  \mathcal{D}_{i}^{(1)} \mathcal{I}_i^{(2)}(1) \mathcal{I}_i^{(1)}(1)  -   \mathcal{D}_{i}^{(1)} \mathcal{I}_i^{(1)}(1) \mathcal{D}_i^{(2)}(c_{Y,S}^i).
	\end{align*}
	
	\begin{align*}
	 \mathcal{D}^{(1)}_i \theta_i  &= (f(\zeta_{i}-\zeta_{i-1}))^{-1}\Big[  \mathcal{D}_i^{(1)} \mathcal{I}^{(1,1)}_{i}(c^{i}_S) -  \mathcal{D}_i^{(1)}  \mathcal{I}^{(1)}_{i}(c^{i}_S)  +  \mathcal{D}_i^{(1)} \mathcal{I}^{(2)}_{i}(b^{i}_Y) + \mathcal{D}_i^{(1)} \mathcal{I}^{(1,2)}_{i}(c^{i}_{Y, S}) \Big], \\
	 \mathcal{I}^{(1)}_k (\theta_k) &=  \mathcal{I}^{(1)}_k (1)  \theta_k - \mathcal{D}^{(1)}_k \theta_k , \, k \leq N_T, \\
	\mathcal{I}^{(1)}_{N_T+1} (\theta_{N_T+1}) &= \theta_{N_T+1} \mathcal{I}^{(1)}_{N_T+1} (1) - \mathcal{D}^{(1)}_{N_T+1} \theta_{N_T+1} = \theta_{N_T+1} \mathcal{I}^{(1)}_{N_T+1} (1).
\end{align*}
}
The following formulae are required for the computation of the weights for the Vega appearing in the identity \eqref{IBP:y0}, for $i \in \left\{1, \cdots, N_T\right\}$ it holds:
{\small
\begin{align*}
\mathcal{I}_{i+1}^{(1, 1)}(d_S^{i+1}) &= m'_i \mathcal{I}_{i+1}^{(1, 1)}(c_S^{i+1}) ,\\
    \mathcal{I}_{i+1}^{(1)}(e_S^{Y, i+1}) & =  -m'_i \mathcal{I}_{i+1}^{(1)}(c_S^{i+1})  +  \mathcal{D}_{i}^{(2)}(c_{Y,S}^{i+1}) \mathcal{I}_{i+1}^{(1)}(1) -   \mathcal{D}_{i+1}^{(1)} \mathcal{D}_{i}^{(2)}(c_{Y,S}^{i+1})  ,\\
     \mathcal{I}_{i+1}^{(2)}(e_Y^{Y, i+1}) &= m'_i \mathcal{I}_{i+1}^{(2)}(b_Y^{i+1}) ,\\
     \mathcal{I}_{i+1}^{(1, 2)}(d_{Y, S}^{i+1}) &= m'_i  \mathcal{I}_{i+1}^{(1, 2)}(c_{Y, S}^{i+1})  ,\\
     \mathcal{I}_{i+1}^{(1)}( e_S^{X, i+1}) &=   e_S^{X, i+1} \mathcal{I}_{i+1}^{(1)}(1) - \mathcal{D}_{i+1}^{(1)} e_S^{X, i+1} = \mathcal{D}_{i}^{(2)}  (c_S^{i+1}) \mathcal{I}_{i+1}^{(1)}(1) -  \mathcal{D}_{i+1}^{(1)}  \mathcal{D}_{i}^{(2)}  (c_S^{i+1}) ,\\
\nonumber    \\
	\mathcal{D}_{i}^{(2)}( 	\mathcal{I}_i^{(1, 1)}(c_S^i) ) &= \mathcal{D}_{i}^{(2)}(  c_S^i) (\mathcal{I}_i^{(1)}(1))^2 + 2 c_S^i \mathcal{I}_i^{(1)}(1) \mathcal{D}_{i}^{(2)} \mathcal{I}_i^{(1)}(1)   - \mathcal{D}_i^{(2)}(c_S^i) \mathcal{D}_{i}^{(1)} \mathcal{I}_i^{(1)}(1) ,\\
	\mathcal{D}_{i}^{(2)}( \mathcal{I}_i^{(1)}(c_S^i) ) &=  \mathcal{D}_{i}^{(2)}(c_S^i) \mathcal{I}_i^{(1)}(1) + c_S^i   \mathcal{D}_{i}^{(2) } \mathcal{I}_i^{(1)}(1),\\
	\mathcal{D}_{i}^{(2)}( \mathcal{I}_i^{(2)}(b_Y^i) ) &=  \mathcal{D}_{i}^{(2)} b_Y^i  \mathcal{I}_i^{(2)}(1) + b_Y^i \mathcal{D}_{i}^{(2)} \mathcal{I}_i^{(2)}(1) - b''_Y(\bar{Y}_i)  ,\\
	\mathcal{D}_{i}^{(2)}( \mathcal{I}_i^{(1, 2)}(c_{Y, S}^i)) &=  \mathcal{D}_{i}^{(2)}(c_{Y,S}^i ) \mathcal{I}_i^{(1)}(1)\mathcal{I}_i^{(2)}(1) +  c_{Y,S}^i  \mathcal{D}_{i}^{(2)} \mathcal{I}_i^{(2)}(1)\mathcal{I}_i^{(1)}(1)  +  c_{Y,S}^i \mathcal{I}_i^{(2)}(1)  \mathcal{D}_{i}^{(2)} \mathcal{I}_i^{(1)}(1)   \\
	& \quad -   \mathcal{I}_i^{(1)}(1)  \mathcal{D}_i^{(2, 2)}(c_{Y,S}^i) - 2 \mathcal{D}_i^{(2)}(c_{Y,S}^i)  \mathcal{D}_i^{(2)} \mathcal{I}_i^{(1)}(1)  ,\nonumber \\
\nonumber  	\\
	\mathcal{D}_{i-1}^{(2)}( 	\mathcal{I}_i^{(1, 1)}(c_S^i) ) &=  \mathcal{D}_{i-1}^{(2)}(  c_S^i) (\mathcal{I}_i^{(1)}(1))^2 + 2 c_S^i \mathcal{I}_i^{(1)}(1) \mathcal{D}_{i-1}^{(2)} \mathcal{I}_i^{(1)}(1) \\ 
	&\quad - \mathcal{D}_{i-1}^{(2)}(c_S^i) \mathcal{D}_{i}^{(1)} \mathcal{I}_i^{(1)}(1) -  c_S^i  \mathcal{D}_{i-1}^{(2)} (  \mathcal{D}_{i}^{(1)} \mathcal{I}_i^{(1)}(1) )  , \nonumber \\
	\mathcal{D}_{i-1}^{(2)}( \mathcal{I}_i^{(1)}(c_S^i) ) &=  \mathcal{D}_{i-1}^{(2)}(c_S^i) \mathcal{I}_i^{(1)}(1) + c_S^i   \mathcal{D}_{i-1}^{(2) } \mathcal{I}_i^{(1)}(1) ,\\
	\mathcal{D}_{i-1}^{(2)}( \mathcal{I}_i^{(2)}(b_Y^i) ) &= b_Y^i \mathcal{D}_{i-1}^{(2)} \mathcal{I}_i^{(2)}(1) + \mathcal{I}_i^{(2)}(1) (b'_Y(\bar{Y}_i) \partial_{\bar{Y}_{i-1}} \bar{Y}_{i} - b'_Y(m_{i-1}) m_{i-1}') - b''_Y(\bar{Y}_i)  \partial_{\bar{Y}_{i-1}} \bar{Y}_{i}   ,\\
	\mathcal{D}_{i-1}^{(2)}( \mathcal{I}_i^{(1, 2)}(c_{Y, S}^i)) &=  \mathcal{D}_{i-1}^{(2)}(c_{Y,S}^i ) \mathcal{I}_i^{(1)}(1)\mathcal{I}_i^{(2)}(1)  +  c_{Y,S}^i ( \mathcal{I}_i^{(2)}(1)  \mathcal{D}_{i-1}^{(2)} \mathcal{I}_i^{(1)}(1)  +   \mathcal{I}_i^{(1)}(1)  \mathcal{D}_{i-1}^{(2)} \mathcal{I}_i^{(2)}(1) )
	 \\
	 & \quad -   \mathcal{I}_i^{(1)}(1)  \mathcal{D}_{i-1}^{(2)}( \mathcal{D}_i^{(2)}(c_{Y,S}^i) )   -   \mathcal{D}_{i-1}^{(2)}(\mathcal{I}_i^{(1)}(1))  \mathcal{D}_i^{(2)}(c_{Y,S}^i)  - c_{Y, S}^i \mathcal{D}_{i-1}^{(2)} \mathcal{D}_i^{(2)} \mathcal{I}_i^{(1)}(1) \nonumber  \\
	 &\quad - \mathcal{D}_{i-1}^{(2)}(c_{Y,S}^i)  \mathcal{D}_i^{(2)} \mathcal{I}_i^{(1)}(1) . \nonumber
 \end{align*}
}

{\small
\begin{align*}
\mathcal{D}_{i}^{(2)} \theta_{i} =& (f(\zeta_{i}-\zeta_{i-1}))^{-1} \Big[ \mathcal{D}_{i}^{(2)}( 	\mathcal{I}_i^{(1, 1)}(c_S^i) ) - \mathcal{D}_{i}^{(2)}( 	\mathcal{I}_i^{(1)}(c_S^i) )  +  \mathcal{D}_{i}^{(2)} (\mathcal{I}_i^{(2)}(b_Y^i) ) + \mathcal{D}_{i}^{(2)}(\mathcal{I}_i^{(1, 2)}(c_{Y, S}^i)  ) \Big],  \\
\mathcal{D}_{i-1}^{(2)} \theta_{i} =&  (f(\zeta_{i}-\zeta_{i-1}))^{-1} \Big[ \mathcal{D}_{i-1}^{(2)}( 	\mathcal{I}_i^{(1, 1)}(c_S^i) ) - \mathcal{D}_{i-1}^{(2)}( 	\mathcal{I}_i^{(1)}(c_S^i) )  +  \mathcal{D}_{i-1}^{(2)} (\mathcal{I}_i^{(2)}(b_Y^i) ) + \mathcal{D}_{i-1}^{(2)}(\mathcal{I}_i^{(1, 2)}(c_{Y, S}^i)  ) \Big],  
 \end{align*}

\begin{align*}
	\rtheta_{i}^{e, Y} 	&= m_{i-1}' \theta_i   + (f(\zeta_{i}-\zeta_{i-1}))^{-1} ( \mathcal{I}_{i}^{(1)}(1) \mathcal{D}_{i-1}^{(2)}(c_{Y,S}^{i}) - \mathcal{D}_{i}^{(1)} \mathcal{D}_{i-1}^{(2)}(c_{Y,S}^{i}) ) , \\
	\mathcal{D}_{i}^{(2)}\rtheta_{i}^{e, Y} 	&= m'_{i-1} \mathcal{D}_{i}^{(2)} \theta_{i}  +   (f(\zeta_{i}-\zeta_{i-1}))^{-1}  \Big[\mathcal{D}_{i}^{(2)}( \mathcal{D}_{i-1}^{(2)}(c_{Y,S}^{i}))   \mathcal{I}_{i}^{(1)}(1) \\
	& \quad  +  \mathcal{D}_{i-1}^{(2)}(c_{Y,S}^{i}) \mathcal{D}_{i}^{(2)} \mathcal{I}_{i}^{(1)}(1)  - \mathcal{D}_{i}^{(2)}\mathcal{D}_{i}^{(1)} \mathcal{D}_{i-1}^{(2)}(c_{Y,S}^{i}) \Big] ,   \nonumber \\
	 \mathcal{I}^{(2)}_{i}(\rtheta^{e, Y}_i)  &=  \rtheta^{e, Y}_i  \mathcal{I}^{(2)}_{i}(1) -  \mathcal{D}_{i}^{(2)}  \rtheta^{e, Y}_i  ,\\ 
	  \mathcal{I}^{(2)}_{i}(m'_{i-1} \theta_i - \rtheta^{e, Y}_i)  &  = - (f(\zeta_{i}-\zeta_{i-1}))^{-1} \mathcal{I}^{(1, 2)}_{i} (  \mathcal{D}_{i-1}^{(2)}(c_{Y,S}^{i})  ) \\
	  &=  - (f(\zeta_{i}-\zeta_{i-1}))^{-1} \Big(  \mathcal{D}_{i-1}^{(2)}(c_{Y,S}^{i})\mathcal{I}^{(1)}_{i} (1) \mathcal{I}^{(2)}_{i} (1) -  \mathcal{I}^{(1)}_{i} (1) \mathcal{D}_{i}^{(2)} \mathcal{D}_{i-1}^{(2)}(c_{Y,S}^{i}) \nonumber  \\
	  & \quad -   \mathcal{D}_{i-1}^{(2)}(c_{Y,S}^{i}) \mathcal{D}_{i}^{(1)}  \mathcal{I}^{(2)}_{i} (1) -  \mathcal{D}_{i}^{(1)} \mathcal{D}_{i-1}^{(2)}(c_{Y,S}^{i}) \mathcal{I}^{(2)}_{i}(1) + \mathcal{D}_i^{(2)} \mathcal{D}_{i}^{(1)} \mathcal{D}_{i-1}^{(2)}(c_{Y,S}^{i})    \Big) , \nonumber  \\
\end{align*}

\begin{align*}
	\rtheta_{i}^{e, X} &=  (f(\zeta_{i}-\zeta_{i-1}))^{-1}  \mathcal{I}_{i}^{(1)}( e_S^{X, i}) =  (f(\zeta_{i}-\zeta_{i-1}))^{-1} ( \mathcal{I}_{i}^{(1)}(1) \mathcal{D}_{i-1}^{(2)}(c_{S}^{i}) -  \mathcal{D}_{i}^{(1)}  \mathcal{D}_{i-1}^{(2)}  (c_S^{i}) ) ,\\
	 \mathcal{D}_{i}^{(1)} \rtheta_{i}^{e, X}   &=  (f(\zeta_{i}-\zeta_{i-1}))^{-1} e_S^{X, i}  \mathcal{D}_{i}^{(1)}  \mathcal{I}_{i}^{(1)}(1),\\
	   \mathcal{I}_{i}^{(1)}( \rtheta_{i}^{e, X} ) &=  \rtheta_{i}^{e, X} \mathcal{I}_{i}^{(1)}(1)  -   \mathcal{D}_{i}^{(1)} \rtheta_{i}^{e, X}   ,\\
	    \mathcal{I}_{i}^{(1)}(  \theta_i \mathcal{D}_{i-1}^{(2)} \bar{X}_{i} ) &=  ( \theta_{i}  \mathcal{I}_{i}^{(1)} (1)  -  \mathcal{D}_{i}^{(1)}  \theta_{i} )   \mathcal{D}_{i-1}^{(2)} \bar{X}_{i}  -  \mathcal{D}_{i}^{(1)}  ( \mathcal{D}_{i-1}^{(2)} \bar{X}_{i}) \theta_i ,
\end{align*}

\begin{align*}
	   \mathcal{I}^{(2)}_{i} \Big( (\sqrt{1-\rho_{i-1}^2} Z^{1}_{i} -& \rho_{i-1} Z^{2}_{i} ) \theta_{i} \Big)   =  \left(\sqrt{1-\rho_{i-1}^2} Z^{1}_{i} - \rho_{i-1} Z^{2}_{i} \right) ( \mathcal{I}^{(2)}_{i} (1) \theta_{i} - \mathcal{D}_{i}^{(2)} \theta_{i}  ) + \frac{\rho_{i-1} \theta_i }{ \sigma_{Y, i-1} \sqrt{1-\rho^2_{i-1}}}    ,
\end{align*}

\begin{align*}
	\rtheta_{i}^{c} 	=&  \mathcal{I}^{(2)}_{i}(m'_{i-1} \theta_i - \rtheta^{e, Y}_i)  + \sigma_{Y,i - 1} \frac{\rho'_{i-1}}{\sqrt{1-\rho^2_{i-1}}}  \mathcal{I}^{(2)}_{i} \Big( (\sqrt{1-\rho_{i-1}^2} Z^{1}_{i} - \rho_{i-1} Z^{2}_{i} ) \theta_{i} \Big)  \\
	& +   \mathcal{I}_{i}^{(1)}(  \theta_i \mathcal{D}_{i-1}^{(2)} \bar{X}_{i} )  -    \mathcal{I}_{i}^{(1)}( \rtheta_{i}^{e, X} )  + \mathcal{D}_{i-1}^{(2)}  \theta_{i}, \nonumber  \\
 \nonumber	\\
	\rtheta_{N_T+1}^{e, Y} =&  \theta_{N_T +1} \Big(m'_{N_T} + \sigma_{Y, N_T} \frac{\rho'_{N_T}}{\sqrt{1 - \rho^2_{N_T}}} (\sqrt{1 - \rho^2_{N_T}}Z^1_{N_T+1} - \rho_{N_T} Z^2_{N_T+1}) \Big) , \\
	\rtheta_{N_T+1}^{e, X} =& \theta_{N_T +1} \Big( -\frac{1}{2} a'_{S, N_T} + \sigma'_{S, N_T} Z^1_{N_T+1} \Big), \\
	\rtheta_{N_T+1}^{c} 	=& 0,  \\
	 \mathcal{I}^{(2)}_{N_T+1}(\rtheta^{e, Y}_{N_T+1})  =& \rtheta^{e, Y}_{N_T+1}  \mathcal{I}^{(2)}_{N_T+1}(1)  - \mathcal{D}^{(2)}_{N_T+1}  \rtheta^{e, Y}_{N_T+1} =  \rtheta^{e, Y}_{N_T+1}  \mathcal{I}^{(2)}_{N_T+1}(1) + \theta_{N_T +1} \frac{\rho'_{N_T} \rho_{N_T}}{1 - \rho^2_{N_T}} ,\\
	   \mathcal{I}^{(2)}_{N_T+1}(\rtheta^{e, X}_{N_T+1})  =& \rtheta^{e, X}_{N_T+1}  \mathcal{I}^{(2)}_{N_T+1}(1)  - \mathcal{D}^{(2)}_{N_T+1}  \rtheta^{e, X}_{N_T+1} =  \rtheta^{e, X}_{N_T+1}  \mathcal{I}^{(2)}_{N_T+1}(1) - \theta_{N_T +1} \frac{ \sigma'_{S, N_T}  }{\sigma_{S, N_T} } .
\end{align*}

}

\end{document}